\documentclass[12pt]{amsart}
\usepackage{amssymb,amsmath,amsthm}
\setlength{\textheight}{8.2in}
\addtolength{\oddsidemargin}{-.5in}
\addtolength{\textwidth}{1in}

\numberwithin{equation}{section}
\begin{document}

\theoremstyle{plain}
\newtheorem{theorem}{Theorem}[section]
\newtheorem{lemma}[theorem]{Lemma}
\newtheorem{proposition}[theorem]{Proposition}
\newtheorem{corollary}[theorem]{Corollary}
\newtheorem{conjecture}[theorem]{Conjecture}

\theoremstyle{definition}
\newtheorem*{definition}{Definition}

\theoremstyle{remark}
\newtheorem*{remark}{Remark}
\newtheorem{example}{Example}[section]
\newtheorem*{remarks}{Remarks}

\newcommand{\cc}{{\mathbf C}}
\newcommand{\qq}{{\mathbf Q}}
\newcommand{\rr}{{\mathbf R}}
\newcommand{\nn}{{\mathbf N}}
\newcommand{\zz}{{\mathbf Z}}
\newcommand{\pp}{{\mathbf P}}
\newcommand{\al}{\alpha}
\newcommand{\be}{\beta}
\newcommand{\ga}{\gamma}
\newcommand{\ze}{\zeta}
\newcommand{\om}{\omega}
\newcommand{\ep}{\epsilon}
\newcommand{\la}{\lambda}
\newcommand{\de}{\delta}
\newcommand{\De}{\Delta}
\newcommand{\Ga}{\Gamma}
\newcommand{\si}{\sigma}

\title{On the length of binary forms}

\author{Bruce Reznick}
\address{Department of Mathematics, University of 
Illinois at Urbana-Champaign, Urbana, IL 61801} 
\email{reznick@math.uiuc.edu}

\subjclass[2000]{Primary: 11E76, 11P05, 14N10}
\begin{abstract}
The $K$-length of a form $f$ in $K[x_1,\dots,x_n]$, $K \subset \cc$, is
the smallest number of $d$-th powers of linear forms of which $f$ is a
$K$-linear combination. We present many results, old and new, about
$K$-length, mainly in $n=2$, and often about the length of the same
form over different fields. For example, the $K$-length of $3x^5
-20x^3y^2+10xy^4$ is three for $K = \qq(\sqrt{-1})$,  four for $K =
\qq(\sqrt{-2})$ and five for $K = \rr$.
\end{abstract}
\date{\today}
\maketitle

\maketitle

\section{Introduction and Overview}

Suppose $f(x_1,...,x_n)$ is a form of degree $d$ with coefficients in
a  field $K \subseteq \cc$. The {\it
$K$-length of $f$}, $L_K(f)$, is the smallest $r$  for which there is a
representation 
\begin{equation}\label{E:basic}
f(x_1,\dots,x_n) = \sum_{j=1}^r \la_j\bigl(\al_{j1}x_1 + \dots +
\al_{jn}x_n\bigr)^d
\end{equation}
with $ \la_j, \al_{jk} \in K$.

In this paper, we consider the $K$-length of a
fixed form $f$ as $K$ varies; this is apparently an open question
in the literature, even for binary forms ($n=2$). Sylvester \cite{S1,S2}
explained how to compute $L_{\cc}(f)$ for binary forms in 1851. Except
for a few remarks, we  shall restrict our attention to binary forms.

It is trivially true that $L_K(f) = 1$ for linear $f$ and for $d=2$, 
$L_K(f)$ equals the rank of $f$: a representation over $K$ can be
found by completing the 
square, and this length cannot be shortened by enlarging the
field. Accordingly, we shall also assume that $d \ge 3$. 
Many of our results  are extremely low-hanging fruit which were either 
known in the 19th century, or would have been, had its mathematicians 
been able to take  21st century  undergraduate mathematics courses.

When $K = \cc$, the $\la_j$'s in \eqref{E:basic}  are
superfluous. The computation of $L_{\cc}(f)$ is a huge, venerable and
active subject, and very hard when $n \ge 3$. The
interested reader is directed to \cite{C, CC, CM, DK, ER,Ge, He, IK,
  LT, RS, Re1, Re2} as representative recent works. Even for small
$n,d \ge 3$, there are still many open questions.  Landsberg and
Teitler \cite{LT} complete a classification of $L_{\cc}(f)$ for
ternary cubics $f$ and also discuss $L_{\cc}(x_1x_2\cdots x_n)$, among
other topics. Historically, much attention
has centered on the $\cc$-length of a {\it general} form of degree
$d$. In 1995, Alexander and Hirschowitz \cite{AH} (see also
\cite{BO,Mi}) established that for 
$n,d \ge 3$, this length is $\lceil \frac 1n
\binom{n+d-1}{n-1}\rceil$, the constant-counting value,
with the four exceptions known since the 19th century -- $(n,d) = (3,5),(4,3), 
(4,4),(4,5)$ -- in which the length is $\lceil \frac 1n
\binom{n+d-1}{n-1}\rceil + 1$.

An alternative definition would remove the coefficients from
\eqref{E:basic}. (The computation of the alternative definition is
likely to be much harder than the one we consider here, if for 
no other reason than that cones are harder to work with than
subspaces.)   
This alternative definition was considered by Ellison \cite{E1} in the
special cases $K = \cc, \rr, \qq$. When $d$ is odd and $K= \rr$, the
$\la_j$'s are also unnecessary.  
When $d$ is even and $K \subseteq
\rr$, it is not easy to determine whether \eqref{E:basic} is possible
for a given $f$. In \cite{R1}, the principal object of study is
$Q_{n,2k}$, the (closed convex) cone of forms which are a sum ($\la_j =
1$) of $2k$-th powers of real linear forms. 
 As two illustrations of the difficulties which can arise in this case:
 $\sqrt 2$ is not totally
positive in $K = \qq(\sqrt 2)$, so $\sqrt 2\ x^2$ is not a sum of squares
in $K[x]$, and $x^4 + 6\la x^2y^2 + y^4 \in Q_{2,4}$ if and only if
$\la \in [0,1]$.  
For more on the possible signs that may arise in a minimal $\rr$-representation
\eqref{E:basic}, see \cite{R4}. 
Helmke \cite{He} uses both definitions for  length for forms, and is mainly
concerned with the coefficient-free version in the case when $K$ is an 
algebraically closed (or a real closed) field of characteristic zero,
not necessarily a subset of $\cc$.  
 Newman and Slater \cite{NS} do not restrict
to homogeneous polynomials. They write $x$ as a sum of $d$ $d$-th
powers of linear polynomials; by substitution, any polynomial
is a sum of at most $d$ $d$-th powers of polynomials. They also show
that the minimum 
number of $d$-th powers in this formulation is $\ge \sqrt d$. Because
of the degrees of the summands, these methods do not 
homogenize to forms. 
Mordell \cite{Mo} showed that a polynomial that is a
sum of cubes of linear forms over $\zz$ is also a sum of at most eight
such cubes. More generally, if $R$ is a commutative 
ring, then its {\it $d$-Pythagoras number}, $P_d(R)$, is the
smallest integer $k$ so that any sum of $d$-th powers in $R$ is a sum
of $k$ $d$-th powers.
This subject is closely related to Hilbert's 17th Problem; see
\cite{CDLR, CLR, CLPR}.

Two examples illustrate the phenomenon of multiple lengths over
different fields.

\begin{example}
Suppose  $f(x,y) = (x + \sqrt 2 y)^d + (x - \sqrt 2 y)^d \in \qq[x,y]$.
Then  $L_K(f)$ is 2 (if $\sqrt 2 \in K$) and $d$ (otherwise). This
example first appeared in  \cite[p.137]{R1}. (See Theorem 4.6 for a
generalization.) 
\end{example}

\begin{example} If $\phi(x,y) = 3x^5 -20x^3y^2+10xy^4$,
then $L_K(\phi) = 3$ if and only if $\sqrt{-1} \in K$,  
$L_K(\phi)= 4$ for
$K=\qq(\sqrt{-2}),\qq(\sqrt{-3}),\qq(\sqrt{-5}),\qq(\sqrt{-6})$ (at least)
and $L_{\rr}(\phi) = 5$. (We give proofs of these assertions
 in Examples 2.1 and 3.1.)
\end{example}

The following simple definitions and remarks apply in the
obvious way to forms in $n \ge 3$ variables, but for simplicity are
given for binary forms.
A representation such as \eqref{E:basic} is called {\it $K$-minimal} if $r
= L_K(f)$.  
Two linear forms are called {\it  distinct} if they (or their $d$-th powers)
are not proportional. A  representation is {\it   honest} if the
summands are pairwise distinct. Any minimal representation is honest.
Two honest representations are {\it 
  different} if the ordered sets of summands are not rearrangements of
each other;  we do not distinguish between
$\ell^d$ and $(\zeta_d^k \ell)^d$ where $\zeta_d
= e^{2\pi i/d}$.

If $g$ is obtained from $f$ by an invertible linear change of
variables over $K$, then $L_K(f) = L_K(g)$.
Given a form $f \in \cc[x,y]$, the field generated by the
coefficients of $f$ over $\cc$ is denoted $E_f$.  The $K$-length can
only be defined for fields $K$ satisfying $E_f \subseteq K \subseteq \cc$. 
The following implication is immediate:
\begin{equation}\label{E:enlarge}
 K_1 \subset K_2 \implies L_{K_1}(f) \ge L_{K_2}(f).
\end{equation}
 Strict inequality in \eqref{E:enlarge} is possible, as shown by the two
examples. The {\it cabinet} of $f$,
$\mathcal C(f)$ is the set of all possible lengths for $f$.

We now outline the remainder of the paper. 

In section two, we give a self-contained proof of Sylvester's 1851
Theorem  (Theorem 2.1). Although originally given over $\cc$, it
adapts easily to any 
 $K \subset \cc$ (Corollary 2.2). If $f$ is a binary form, then
 $L_K(f) \le r$ iff a certain 
subspace of the binary forms of degree $r$ (a subspace determined by   $f$)   
contains a form that splits into distinct factors over $K$. We
illustrate the algorithm by proving the 
assertions of lengths 3 and 4 for $\phi$ in Example 1.2.

In section three, we prove (Theorem 3.2) a homogenized
version of Sylvester's 1864 Theorem (Theorem 3.1), which implies that
if real $f$ has $r$ linear factors  over $\rr$ (counting
multiplicity), then $L_{\rr}(f) \ge r$. In particular, 
$L_{\rr}(\phi) = 5$. As far as we have been able to tell, Sylvester
did not connect his two theorems: perhaps because he presented the
second one for non-homogeneous polynomials in a single variable, perhaps
because ``fields'' had not yet been invented.   

We apply these theorems and some other simple observations in sections
four and five.  
 We first show that if
$L_{\cc}(f) = 1$, then $L_{E_f}(f) = 1$ as well (Theorem 4.1).
Any set  of $d+1$ $d$-th powers of pairwise distinct linear forms is
linearly independent (Theorem 4.2).  
It follows quickly that if $f(x,y)$ has two different honest
representations of length $r$ and $s$, then $r+s\ge d+2$ (Corollary
4.3), and so if
$L_{E_f}(f) = r \le \frac{d+1}2$, then the representation over $E_f$
is the unique minimal $\cc$-representation (Corollary 4.4). We show
that Example 1.1 gives the template for forms $f$ satisfying $L_{\cc}(f) = 2 <
L_{E_f}(f)$ (see Theorem 4.6), and give two generalizations which
provide other types of constructions (Corollaries 4.7 and 4.8) of
forms with multiple lengths.  
We apply Sylvester's 1851 Theorem to  give an easy proof of the known
result that $L_{\cc}(f) \le d$ (Theorem 4.9) and a slightly  
trickier proof of the probably-known result that $L_K(f) \le d$ as well (Theorem
4.10). Theorem 4.10 combines with Theorem 3.2 into Corollary 4.11:
if $f \in \rr[x,y]$ is a product of $d$ linear factors,
then $L_{\rr}(f) = d$. Conjecture 4.12 asserts that $f \in \rr[x,y]$
is a product of $d$ linear factors if and only if $L_{\rr}(f) = d$.

 In Corollary 5.1, we discuss the
various possible cabinets when $d=3,4$; and give examples for each one
not already forbidden.
We then completely classify binary cubics; the key point 
of Theorem 5.2 is that if the cubic $f$ has no repeated factors, then $L_k(f) =
2$ if and only  if $E_f(\sqrt{-3\Delta(f)}) \subseteq K$; this
significance of the  discriminant $\Delta(f)$ can already be found
e.g. in Salmon  \cite[\S 167]{Sa}. This proves Conjecture 4.12 for
$d=3$. In Theorem 5.3, we show that Conjecture 4.12 also holds for $d=4$.  
 Another probably old theorem (Theorem 5.4) is that $L_{\cc}(f) = d$
if and only if there are distinct linear forms $\ell, \ell'$ so that
$f = \ell^{d-1}\ell'$. The minimal representations of
$x^ky^k$ are parameterized (Theorem 5.5), and in Corollary 5.6, we
show that $L_{K}((x^2+y^2)^k) \ge k+1$, with equality if and only if
$\tan\frac {\pi}{k+1} \in K$. In particular, $L_{\qq}((x^2+y^2)^2) =
4$. Theorem 5.7  shows that $L_{\qq}(x^4 + 6\la x^2y^2 + y^4) = 3$ if
and only if a certain quartic diophantine equation over $\zz$
has a non-zero solution.

Section six lists some open questions.

We would like to express our appreciation to the organizers of the
{\it Higher Degree Forms} conference in Gainesville in May 2009 for
offering the opportunities to speak on these topics, and to write this
article for its Proceedings. We also thank Mike Bennett, Joe Rotman
and Zach Teitler for helpful conversations. 

\section{Sylvester's 1851 Theorem}

Modern proofs of Theorem 2.1 can be found in the work of Kung and Rota:
\cite[\S 5]{KR}, with further discussion in \cite{K1,K2,K3,R3}.
We present here a very elementary  proof showing the connection with constant
coefficient linear recurrences, in the hopes that this remarkable
theorem might become better known to the modern reader.

\begin{theorem}[Sylvester]
Suppose 
\begin{equation}\label{E:S1}
f(x,y) = \sum_{j=0}^d \binom dj a_j x^{d-j}y^j
\end{equation}
 and suppose 
\begin{equation}\label{E:S2}
h(x,y)
= \sum_{t=0}^r 
c_tx^{r-t}y^t = \prod_{j=1}^{r} ( - \beta_j x + \alpha_j y)
\end{equation}
 is  a product of pairwise distinct linear factors.   Then there exist
 $\lambda_k\in \cc$ so that  
\begin{equation}\label{E:S3}
f(x,y) = \sum_{k=1}^r \lambda_k (\alpha_k x + \beta_k y)^d
\end{equation}
if and only if
\begin{equation}\label{E:S4}
\begin{pmatrix}
a_0 & a_1 & \cdots & a_r \\
a_1 & a_2 & \cdots & a_{r+1}\\
\vdots & \vdots & \ddots & \vdots \\
a_{d-r}& a_{d-r+1} & \cdots & a_d
\end{pmatrix}
\cdot
\begin{pmatrix}
c_0\\c_1\\ \vdots \\ c_r
\end{pmatrix}
=\begin{pmatrix}
0\\0\\ \vdots \\ 0
\end{pmatrix};
\end{equation}
that is, if and only if 
\begin{equation}\label{E:S5}
\sum\limits_{t=0}^r a_{\ell + t} c_t = 0, \qquad \ell = 0,1,\dots,  d-r.
\end{equation}
\end{theorem}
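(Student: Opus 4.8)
The plan is to exploit the classical fact that a sequence satisfies a linear recurrence with characteristic polynomial $h$ precisely when it is a linear combination of the ``power sequences'' coming from the roots of $h$, and then to homogenize this statement to the level of binary forms. Concretely, write $h(x,y) = \sum_{t=0}^r c_t x^{r-t} y^t = \prod_{j=1}^r(-\beta_j x + \alpha_j y)$, and think of $(c_0,\dots,c_r)$ as the coefficients of a recurrence. Since the linear factors of $h$ are pairwise distinct, the $r$ points $[\alpha_j : \beta_j] \in \pp^1$ are distinct, and the associated vectors $v_j := (\alpha_j^{d-\ell}\beta_j^\ell)_{\ell=0}^{?}$ will span the relevant solution space. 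The two directions of the ``if and only if'' will be handled separately, but both hinge on the same linear-algebra core.

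First I would prove the ``if'' direction ($\eqref{E:S3} \Rightarrow \eqref{E:S4}$), which is the easy bookkeeping half. If $f = \sum_{k=1}^r \lambda_k(\alpha_k x + \beta_k y)^d$, then expanding by the binomial theorem and comparing with $\eqref{E:S1}$ gives $a_j = \sum_{k=1}^r \lambda_k \alpha_k^{d-j}\beta_k^j$ for $0 \le j \le d$. Then for each $\ell$ with $0 \le \ell \le d-r$,
\begin{equation*}
\sum_{t=0}^r a_{\ell+t} c_t = \sum_{k=1}^r \lambda_k \alpha_k^{d-\ell-r}\beta_k^{\ell} \sum_{t=0}^r c_t \alpha_k^{r-t}\beta_k^t = \sum_{k=1}^r \lambda_k \alpha_k^{d-\ell-r}\beta_k^{\ell}\, h(\alpha_k,\beta_k) = 0,
\end{equation*}
since $h(\alpha_k,\beta_k) = \prod_j(-\beta_j\alpha_k + \alpha_j\beta_k) = 0$ (the $j=k$ factor vanishes). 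This establishes $\eqref{E:S5}$, hence $\eqref{E:S4}$.

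For the converse, I would argue that the solution space $V \subseteq \cc^{d+1}$ of the homogeneous linear system ``$(a_0,\dots,a_d)$ satisfies $\sum_{t=0}^r a_{\ell+t}c_t = 0$ for $\ell = 0,\dots,d-r$'' has dimension exactly $r$: the $d-r+1$ equations are visibly independent because $c_0$ (equivalently $c_r$; at least one endpoint coefficient is nonzero since $h$ has degree $r$ with distinct roots, in fact after a harmless change of variables one may assume $c_r = \prod \alpha_j \neq 0$) appears with a staircase pattern. On the other hand, the $r$ vectors $w_k := (\alpha_k^{d-j}\beta_k^j)_{j=0}^d$, $k = 1,\dots,r$, all lie in $V$ by the computation just above (with $\lambda_k$ replaced by a single $1$), and they are linearly independent because the $[\alpha_k:\beta_k]$ are distinct points of $\pp^1$ and $d \ge r$ — this is a generalized Vandermonde argument. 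Hence $\{w_k\}$ is a basis of $V$, so any $(a_0,\dots,a_d)$ satisfying $\eqref{E:S4}$ is a combination $\sum_k \lambda_k w_k$, which is exactly $\eqref{E:S3}$ after rehomogenizing.

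The main obstacle is the dimension count: one must be careful that the coefficient matrix in $\eqref{E:S4}$, which is $(d-r+1) \times (r+1)$, genuinely has full row rank $d-r+1$ (equivalently, that the map $(a_j) \mapsto (\sum_t a_{\ell+t}c_t)_\ell$ is surjective), so that $\dim V = (d+1) - (d-r+1) = r$. This is where the hypothesis that $h$ is not the zero form — and the normalization that an endpoint coefficient of $h$ is nonzero — gets used; without full row rank one only gets $\dim V \ge r$ and the two subspaces need not coincide. Once that rank statement and the generalized Vandermonde independence of the $w_k$ are in hand, the theorem follows immediately. A clean way to organize all of this is to phrase it as: the bi-infinite (or length-$(d+1)$) sequences killed by the recurrence operator with symbol $h$ are spanned by the sequences $j \mapsto \alpha_k^{d-j}\beta_k^j$, which is the homogeneous analogue of the standard solution theory for constant-coefficient linear recurrences alluded to in the preamble.
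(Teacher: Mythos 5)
Your proof is correct in outline and takes a genuinely different route from the paper for the harder direction. The paper proves \eqref{E:S4} $\Rightarrow$ \eqref{E:S3} dynamically: it extends $(a_j)$ to an infinite sequence via the recurrence \eqref{E:S5}, forms the generating function $\Phi(T)=\sum \tilde a_jT^j$, shows its denominator is $\prod(1-\be_jT)$, and reads off the $\la_k$ from the partial-fraction decomposition; this forces a normalization $c_r\neq 0$ and a separate case $c_r=0$ in which a $y^d$ summand appears. You instead make a static dimension count: the space $V$ of vectors $(a_0,\dots,a_d)$ annihilated by the banded Toeplitz operator with symbol $h$ has dimension exactly $(d+1)-(d-r+1)=r$, and the $r$ Vandermonde vectors $w_k=(\al_k^{d-j}\be_k^j)_j$, which lie in $V$ by the same computation as the forward direction and are independent because the $[\al_k:\be_k]$ are distinct (this is the paper's Theorem 4.2), must therefore span $V$. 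Your argument avoids both the case split and the analytic detour through convergence of $\Phi$, at the price of invoking the Vandermonde independence up front; the paper's route has the advantage of being constructive (it literally computes the $\la_k$) and of making the link to linear recurrences explicit, which the author wanted to highlight.

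One assertion you make in the rank step is false as stated: it is not true that ``at least one endpoint coefficient is nonzero'' for an $h$ with distinct factors --- $h=xy$ has $c_0=c_2=0$. This does not damage the proof, because the staircase argument needs only that $h\neq 0$: if $c_s$ is the \emph{first} nonzero coefficient, then row $\ell$ of the $(d-r+1)\times(d+1)$ banded matrix has its first nonzero entry in column $\ell+s$, and these columns are distinct, so the rows are independent and the matrix has full row rank $d-r+1$ in all cases. You should phrase the rank claim that way (or carefully justify the ``harmless change of variables,'' which secretly requires the $GL_2$-equivariance of the apolarity pairing and is more work than the staircase). With that repair the argument is complete.
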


\begin{proof}
First suppose that \eqref{E:S3} holds. Then for $0 \le j \le d$,  
\begin{equation*}
\begin{gathered}
a_j = \sum_{k=1}^r \la_k \al_k^{d-j}\be_k^j \implies 
\sum_{t=0}^r a_{\ell+t} c_t =  \sum_{k=1}^r
\sum_{t=0}^r\la_k \al_k^{d-\ell-t}\be_k^{\ell+t} c_t  \\ =
\sum_{k=1}^r \la_k \al_k^{d-\ell-r}\be_k^{\ell} \sum_{t=0}^r 
\al_k^{r-t}\be_k^t c_t =  \sum_{k=1}^r \la_k \al_k^{d-\ell-r}\be_k^{\ell}
\ h(\al_k,\be_k) = 0. 
\end{gathered}
\end{equation*}

Now suppose that \eqref{E:S4} holds and suppose first that $c_r \neq
0$. We may assume without loss of generality that $c_r
= 1$ and that $\al_j=1$ in \eqref{E:S2}, so that the $\be_j$'s are distinct.
Define the {\it infinite}
sequence $(\tilde a_j)$, $j \ge 0$, by:
\begin{equation}\label{E:inf}
 \tilde a_j = a_j\quad \text{if} \quad 0 \le j \le r-1; \qquad
\tilde a_{r+\ell} = - \sum_{t=0}^{r-1} \tilde a_{t+\ell}c_t \quad
\text{for} \quad \ell \ge 0.
\end{equation}
This sequence satisfies the recurrence of \eqref{E:S5}, so that
\begin{equation}\label{E:trick}
\tilde a_j = a_j\quad \text{for}\quad j \le d. 
\end{equation}
Since $|\tilde a_j| \le c\cdot M^j$ for suitable $c,M$, the generating
function 
\begin{equation*} 
\Phi(T) = \sum_{j=0}^\infty \tilde a_j T^j
\end{equation*}
converges in a neighborhood of 0. We have
\begin{equation*}
\begin{gathered}
\left(\sum_{t=0}^r c_{r-t} T^t\right) \Phi(T) =
 \sum_{n=0}^{r-1} \left(\sum_{j=0}^n c_{r-(n-j)}\tilde a_j \right)T^n + 
\sum_{n=r}^\infty \left(\sum_{t=0}^r c_{r-t}\tilde a_{n-t} \right) T^n.
\end{gathered}
\end{equation*} 
It follows from \eqref{E:inf} that the second sum vanishes, and hence
 $\Phi(T)$ is a rational function with denominator
\begin{equation*}
\sum_{t=0}^r c_{r-t} T^t = h(T,1) = \prod_{j=1}^r (1 - \be_j T).
\end{equation*}
By partial fractions, there exist $\la_k \in
\cc$ so that 
\begin{equation}\label{E:reveal}
 \sum_{j=0}^\infty \tilde a_j T^j = \Phi(T) = \sum_{k=1}^r \frac
 {\la_k}{1 - \be_kT} \implies \tilde a_j = \sum_{k=1}^r \la_k\be_k^j.
\end{equation}
A comparison of \eqref{E:reveal} and \eqref{E:trick} with \eqref{E:S1}
shows that
\begin{equation}\label{E:punchline}
\begin{gathered}
f(x,y) = \sum_{j=0}^d \binom dj a_j x^{d-j}y^j = 
\sum_{k=1}^r \la_k \sum_{j=0}^d \binom dj \be_k^j x^{d-j}y^j 
 =\sum_{k=1}^r \la_k (x + \be_k y)^d,
\end{gathered}
\end{equation}
as claimed in \eqref{E:S3}.

If $c_r=0$, then $c_{r-1} \neq 0$, because $h$ has distinct
factors. We may proceed as before, replacing $r$ by $r-1$ and taking
$c_{r-1} = 1$, so that \eqref{E:S2} becomes
\begin{equation}\label{E:S2prime}
h(x,y)
= \sum_{t=0}^{r-1}
c_tx^{r-t}y^t = x \prod_{j=1}^{r-1} (y - \beta_j x).
\end{equation}
Since $c_r=0$, \eqref{E:S4} loses a column and becomes
\begin{equation*}
\begin{pmatrix}
a_0 & a_1 & \cdots & a_{r-1} \\
a_1 & a_2 & \cdots & a_{r}\\
\vdots & \vdots & \ddots & \vdots \\
a_{d-r}& a_{d-r+1} & \cdots & a_{d-1}
\end{pmatrix}
\cdot
\begin{pmatrix}
c_0\\c_1\\ \vdots \\ c_{r-1}
\end{pmatrix}
=\begin{pmatrix}
0\\0\\ \vdots \\ 0
\end{pmatrix}.
\end{equation*}
We argue as before, except that \eqref{E:trick} becomes
\begin{equation}\label{E:trickprime}
\tilde a_j = a_j\quad \text{for}\quad j \le d-1, \qquad a_d = \tilde
a_d + \la_r,
\end{equation}
and \eqref{E:punchline} becomes
\begin{equation}\label{E:punchline2}
\begin{gathered}
f(x,y) = \sum_{j=0}^d \binom dj a_j x^{d-j}y^j = 
\la_r y^d + \sum_{k=1}^{r-1} \la_k \sum_{j=0}^d \binom dj \be_k^j x^{d-j}y^j \\
 =\la_r y^d + \sum_{k=1}^{r-1} \la_k (x + \be_k y)^d.
\end{gathered}
\end{equation}
By \eqref{E:S2prime}, \eqref{E:punchline2} meets the description of
\eqref{E:S3}, completing the proof. 
\end{proof}

The $(d-r+1) \times (r+1)$ Hankel  matrix in \eqref{E:S4} will be
denoted $H_r(f)$. 
If $(f,h)$ satisfy the criterion of this theorem,  we shall say that
$h$ is a {\it Sylvester form   for $f$}. If the only Sylvester forms
of degree $r$ are $\la h$ for $\la \in \cc$, we say that $h$ is the
{\it unique} Sylvester form for $f$. Any multiple of a
Sylvester form that has no repeated factors is also a Sylvester form,
since there is no requirement that $\la_k \neq 0$ in
\eqref{E:S3}. If $f$ has a unique Sylvester form of degree $r$,
then $L_{\cc}(f) = r$ and $L_K(f) \ge r$.

The proof of Theorem 2.1 in \cite{R3} is based on apolarity. If $f$ and
$h$ are given by \eqref{E:S1} and \eqref{E:S2}, and
$h(D) = \prod_{j=1}^{r} (\beta_j 
\frac{\partial}{\partial x} - \alpha_j \frac {\partial}{\partial y})$,
then
\begin{equation*}
h(D)f = \sum_{m=0}^{d-r} \frac{d!}{(d-r-m)!m!}\left( \sum_{i=0}^{d-r}
  a_{i+m}c_i \right) x^{d-r-m}y^m
\end{equation*}
Thus, \eqref{E:S4} is equivalent to $h(D)f=0$. One can then argue that
 each linear factor in $h(D)$ kills a different summand,
and dimension counting takes care of the rest. In particular, if $\deg
h > d$, then $h(D)f=0$ automatically, and this implies that
$L_{\cc}(f) \le d+1$. Theorem 4.2 is a less mysterious explanation of this
fact.  

If $h$ has repeated factors, a condition of interest in
\cite{K1,K2,K3,KR,R3}, then Gundelfinger's Theorem \cite{G}, first
proved in 1886, shows that
a factor $(\be x - \al y)^\ell$ of $h$ corresponds to a summand
$(\al x + \be y)^{d+1-\ell} q(x,y)$ in $f$, where $q$ is an arbitrary
form of degree $\ell - 1$. (Such a summand is unhelpful in the current
context when $\ell \ge 2$.) 
 There is a lengthy history of the connections with the Apolarity
 Theorem in \cite{R3}.

If $d=2s-1$ and $r=s$, then $H_s(f)$ is $s \times (s+1)$ and
has a non-trivial null-vector; for a general $f$, the resulting form
$h$ has distinct factors, and so is a unique Sylvester form. (The
coefficients  of $h$, and its discriminant, are polynomials in the
coefficients of $f$.)  This is how Sylvester proved that a general
binary form of degree $2s-1$ is a sum of $s$ powers of linear forms
and the minimal representation is unique. (If so, $L_K(f) = s$ iff $h$
splits in $K$, but this does not happen in general if $s \ge 2$.) 

If $d=2s$ and $r=s$, then $H_s(f)$ is square; $\det(H_s(f))$ is the
{\it catalecticant} of $f$. (For etymological 
exegeses on ``catalecticant'', see \cite[pp.49-50]{R1} and
\cite[pp.104-105]{Ge}.) 
In general, there exists $\la$ so that the catalecticant of
$f(x,y) - \lambda x^{2s}$ vanishes, and the resulting non-trivial null
vector is generally a Sylvester form (no repeated factors). Thus,
a general binary form of degree $2s$ is a sum of $\lambda
x^{2s}$ plus $s$ powers of linear forms. (It is less clear whether one
should expect $L_K(f) = s+1$ for general $f$.) 

Sylvester's Theorem can be adapted to compute $K$-length when $K
\subsetneq \cc$.  

\begin{corollary}
Given $f \in K[x,y]$, $L_K(f)$ is the minimal degree of a Sylvester
form for $f$ which splits completely over $K$. 
\end{corollary}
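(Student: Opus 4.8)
The plan is to deduce this directly from Sylvester's Theorem (Theorem 2.1) together with its proof, by tracking where the field matters. First I would observe that the characterization of $K$-length amounts to two inequalities. For the lower bound, suppose $L_K(f) = r$, so there is a representation \eqref{E:basic} over $K$ with $r$ pairwise distinct linear forms $\al_k x + \be_k y$ (a minimal representation is honest). The associated form $h(x,y) = \prod_{k=1}^r(-\be_k x + \al_k y)$ then lies in $K[x,y]$, splits completely over $K$ by construction, and satisfies \eqref{E:S4} by the ``only if'' direction of Theorem 2.1 (which is the easy computational direction and does not use $K = \cc$). Hence there is a Sylvester form for $f$ of degree $r = L_K(f)$ splitting over $K$, so the minimal degree of such a form is at most $L_K(f)$.

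For the reverse inequality, suppose $h \in K[x,y]$ is a Sylvester form for $f$ of degree $r$ that splits completely over $K$, say $h = \prod_{j=1}^r(-\be_j x + \al_j y)$ with $\al_j, \be_j \in K$ and the factors pairwise distinct. By the ``if'' direction of Theorem 2.1 there exist $\la_k \in \cc$ with $f = \sum_{k=1}^r \la_k(\al_k x + \be_k y)^d$. The key point is that these $\la_k$ actually lie in $K$: the $\la_k$ are the unique solution of a linear system with coefficients in $K$ (the Vandermonde-type system coming from matching the coefficients $a_j = \sum_k \la_k \al_k^{d-j}\be_k^j$, or equivalently the partial-fraction / recurrence computation in the proof of Theorem 2.1, all of whose steps stay inside $K$ once the $\be_k$ and the initial data $a_j$ are in $K$). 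Since the $r$ powers of distinct linear forms are linearly independent over $\cc$ (this is Theorem 4.2, or one can argue directly via a Vandermonde determinant, noting $r \le d+1$), the $\la_k$ are determined and must coincide with the solution over $K$. Therefore $f$ has a representation \eqref{E:basic} over $K$ with $r$ summands, giving $L_K(f) \le r$. Taking the minimum over all such $h$ yields $L_K(f) \le$ (minimal degree of a Sylvester form splitting over $K$).

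Combining the two inequalities gives the equality claimed in the corollary. I would also remark that one should double-check the degenerate case $c_r = 0$ handled separately in the proof of Theorem 2.1 (where one summand is $\la_r y^d$): there too every quantity produced is a $K$-rational function of the $K$-data, so the argument is unaffected; in fact it is cleanest to note that splitting ``completely over $K$'' includes the factor $x$ (or $y$), and the recurrence bookkeeping in \eqref{E:trickprime} and \eqref{E:punchline2} is field-agnostic.

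The main obstacle, such as it is, is the rationality of the coefficients $\la_k$ over $K$ in the ``if'' direction. This is not deep, but it must be stated carefully, because Theorem 2.1 as quoted only asserts $\la_k \in \cc$; one needs to re-examine its proof (partial fractions over $K$, or solving a nonsingular $K$-linear system using that distinct powers are independent) to see that no field extension is forced. Everything else is a direct transcription of Theorem 2.1 plus the definitions of Sylvester form and $K$-length.
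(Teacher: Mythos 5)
Your proof is correct and follows essentially the same route as the paper: a minimal $K$-representation yields a split Sylvester form of degree $L_K(f)$, and conversely the coefficients $\la_k$ can be recovered in $K$ because they solve the linear system $a_j = \sum_k \al_k^{d-j}\be_k^j X_k$ with entries in $K$. The paper gets the $K$-rationality of the $\la_k$ slightly more cheaply --- a $K$-linear system that is solvable over $\cc$ is solvable over $K$, with no uniqueness needed --- but your linear-independence argument is equally valid since the relevant degree is at most $d+1$.
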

\begin{proof}
If \eqref{E:S3} is a minimal representation for $f$ over $K$, where
$\la_k, \al_k, \be_k \in K$, then $h(x,y) \in K[x,y]$ splits over $K$
by \eqref{E:S2}. Conversely, if $h$ is a Sylvester form for $f$ satisfying
\eqref{E:S2} with $\al_k, \be_k \in K$, then \eqref{E:S3} holds for
some $\la_k \in \cc$. This is equivalent to saying that  the linear system
\begin{equation}\label {E:univer}
a_j = \sum_{k=1}^r  \al_k^{d-j} \be_k^j X_k, \quad (0 \le j \le d)
\end{equation}
has a solution $\{X_k = \la_k\}$ over $\cc$. Since $a_j, \al_k^{d-j} \be_k^j
\in K$, it follows that \eqref{E:univer} also has a solution over $K$,
so that $f$ has a $K$-representation of length $r$. 
\end{proof}

We apply these results to the quintic from Example 1.2.

\begin{example}[Continuing Example 1.2]
Note that
\begin{equation*}
\begin{gathered}
\phi(x,y) =    3x^5 -20x^3y^2+10xy^4 =  \binom 50 \cdot 3\ x^5 +
\binom 51 \cdot 0\ x^4y \\ +  \binom 52 \cdot (-2)\ x^3y^2  +  \binom 53
\cdot 0\ x^2y^3 +  \binom 54 \cdot 2\ xy^4 + \binom 55 \cdot 0\ y^5.
\end{gathered}
\end{equation*}
Since
\begin{equation*}
\begin{pmatrix}
3 & 0 & -2 & 0\\
0 & -2 & 0 & 2\\
-2 & 0 & 2 & 0\\
\end{pmatrix}
\cdot
\begin{pmatrix}
c_0\\c_1\\c_2 \\ c_3
\end{pmatrix}
=\begin{pmatrix}
0\\0\\0
\end{pmatrix} 
\iff (c_0,c_1,c_2,c_3) = r(0,1,0,1),
\end{equation*}
$\phi$ has a {\it unique} Sylvester form of degree 3:
$h(x,y) = y(x^2+y^2) = 
y(y - ix)(y+ix)$. Accordingly, there exist $\la_k \in \cc$ so that
\begin{equation*}
\phi(x,y) = \la_1 x^5 + \la_2 (x + i y)^5 + \la_3 (x - i y)^5. 
\end{equation*}
Indeed, $\la_1=\la_2=\la_3 = 1$, as may be  checked. It follows
that $L_K(\phi) = 3$ if and only if $i \in K$. (A representation of
length two would be detected here if some $\la_k = 0$.)

To find representations for $\phi$ of length 4, 
we revisit \eqref{E:S4}:
\begin{equation*}
\begin{gathered}
H_4(\phi) \cdot (c_0,c_1,c_2,c_3,c_4)^t = (0,0)^t  \iff
3c_0-2c_2+2c_4= -2c_1+2c_3 = 0 
\\
\iff (c_0,c_1,c_2,c_3,c_4) = r_1(2,0,3,0,0) + r_2(0,1,0,1,0) +
r_3(0,0,1,0,1), 
\end{gathered}
\end{equation*}
hence $h(x,y) = r_1x^2(2x^2 + 3 y^2) + y(x^2+y^2)(r_2 x + r_3
y)$. Given a field $K$, it is far from obvious whether
there exist $\{r_{\ell}\}$ so that $h$ splits into distinct factors over
$K$. Here are some 
imaginary quadratic fields for which this happens. 

 The choice $(r_1,r_2,r_3) = (1,0,2)$ gives $h(x,y) =
(2x^2+y^2)(x^2+2y^2)$ and
\begin{equation*}
24\phi(x,y) = 4 (x + \sqrt{-2} y)^5 + 4 (x - \sqrt{-2}  y)^5
+  (2x + \sqrt{-2}  y)^5 + (2x - \sqrt{-2}  y)^5.
\end{equation*}
Similarly, $(r_1,r_2,r_3) = (2,0,9)$ and $(2,0,-5)$ give $h(x,y) =
(x^2 + 3 y^2) (4 x^2 + 3 y^2)$ and $ (x^2-y^2)(4x^2+5y^2)$, leading to 
representations for $\phi$ of length 
4 over $\qq(\sqrt{-3})$ and $\qq(\sqrt{-5})$. The simplest such
representation we have found for $\qq(\sqrt{-6})$ uses
$(r_1,r_2,r_3) = (8450,0,-104544)$ and
\begin{equation*}
h(x,y) = (5x + 12y)(5x -12y)(6 \cdot 13^2x^2 + 33^2y^2).
\end{equation*}
It is easy to believe that $L_{\qq(\sqrt{-m})}(\phi) = 4$ for all
squarefree $m \ge 2$, though we have no proof. 
In Example 3.1, we shall show that there is no
choice of $(r_1,r_2,r_3)$ for which $h$ splits into distinct factors over any
subfield of $\rr$.
\end{example}

\section{Sylvester's 1864 Theorem}

Theorem 3.1 was discovered by Sylvester
\cite{S3} in 1864 while proving  Isaac Newton's
conjectural variation on Descartes' Rule of Signs, see
\cite{Ho,Y}. This theorem appeared in P\'olya-Szeg\"o
\cite[Ch.5,Prob.79]{PS}, and has been used  by P\'olya and 
Schoenberg \cite{PSc} and Karlin \cite[p.466]{K}. The (dehomogenized)
version proved in \cite{PS} is: 

\begin{theorem}[Sylvester]
Suppose $0 \neq \la_k$ for all $k$ 
and $\ga_1 < \dots < \ga_r$, $r \ge 2$, are real
numbers such 
that
\begin{equation*}
Q(t) = \sum_{k=1}^r\la_k(t-\ga_k)^d
\end{equation*}
does not vanish identically. Suppose the sequence
$(\la_1,\dots,\la_r,(-1)^d\la_1)$ has $C$ changes of sign and $Q$ has $Z$
zeros, counting multiplicity. Then $Z \le C$. 
\end{theorem}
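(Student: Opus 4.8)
The plan is to prove this by induction on $r$, using a standard ``divided-differences/Rolle'' argument tailored to the structure of $Q(t)=\sum_{k=1}^r\la_k(t-\ga_k)^d$. The sign-change count $C$ of the cyclic-looking sequence $(\la_1,\dots,\la_r,(-1)^d\la_1)$ is exactly what is produced by combining interior sign changes among the $\la_k$ with the ``boundary'' behavior at $\pm\infty$: since $Q(t)\sim(\sum_k\la_k)t^d$ is not the relevant obstruction (that sum may vanish), one instead reads off the sign of $Q$ near $+\infty$ from $\la_r$ and near $-\infty$ from $(-1)^d\la_1$, which is why the augmented term $(-1)^d\la_1$ appears.

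First I would handle the base case $r=2$ directly: $Q(t)=\la_1(t-\ga_1)^d+\la_2(t-\ga_2)^d$ with $\ga_1<\ga_2$. If $\la_1,\la_2$ have the same sign and $d$ is even, $Q$ has no real zeros and $C=0$; if they have opposite signs, or $d$ is odd, one checks $Q$ has at most $\min(d, \text{something})$ zeros but in fact at most $1$ or $2$ matching $C\in\{1,2\}$ — this is a short finite check. For the inductive step, the key device is to differentiate: $Q'(t)=d\sum_k\la_k(t-\ga_k)^{d-1}$ has the same shape with $d$ replaced by $d-1$ and the same $\la_k,\ga_k$, hence the same interior sign pattern, and by Rolle's theorem $Z(Q)\le Z(Q')+1$ on $\rr$ provided we account for a possible zero ``escaping to infinity.'' The careful bookkeeping is: differentiating lowers $d$ by one, which flips the parity and hence may change the augmented sign term from $(-1)^d\la_1$ to $(-1)^{d-1}\la_1$, altering $C$ by exactly $1$; Rolle's theorem then shows $Z$ drops by at least the compensating amount. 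Iterating $d$ times (or down to a case where the exponent is small enough that the inequality is immediate, e.g. exponent $1$, where $Q$ is piecewise linear and the count is transparent) closes the induction.

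The main obstacle I expect is the precise accounting of zeros at multiplicity and the ``zero at infinity'' phenomenon when passing from $Q$ to $Q'$: a zero of $Q$ of multiplicity $m$ contributes $m-1$ to the zero count of $Q'$, Rolle contributes one more between consecutive distinct real zeros, and one must verify that the total never exceeds what the sign-change drop allows. Equivalently, and perhaps more cleanly, I would phrase the whole argument via the theory of \emph{Descartes systems} / \emph{Chebyshev systems}: the functions $t\mapsto(t-\ga_k)^d$ form (on a suitable interval) a system for which the number of zeros of a linear combination is bounded by the number of sign changes of the coefficient sequence, and the augmented term handles the noncompactness of $\rr$. Either way, the delicate point is making the endpoint/infinity contribution rigorous; everything else is the routine Rolle induction. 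Since the paper only needs the \emph{homogenized} consequence (Theorem 3.2) — that a real binary form with $r$ real linear factors counted with multiplicity has $L_\rr(f)\ge r$ — I would, after proving Theorem 3.1 as above, simply homogenize: set $t=x/y$, clear denominators, and translate ``$Z$ zeros of $Q$'' into ``$r$ real linear factors of $f$'' while ``$C\le r-1$ sign changes among $r$ coefficients'' forces $r\le$ the number of summands, which is the desired length bound.
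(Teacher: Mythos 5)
Your inductive step has a genuine gap. Differentiating $Q$ gives $Q'(t)=d\sum_k\la_k(t-\ga_k)^{d-1}$, whose interior coefficient signs are unchanged; only the augmented boundary term flips from $(-1)^d\la_1$ to $(-1)^{d-1}\la_1$. Consequently the sign-change count $C$ is \emph{not} monotone under differentiation: writing $\bar\sigma$ for the number of sign changes in $(\la_1,\dots,\la_r)$, one has $C=\bar\sigma$ or $\bar\sigma+1$ according to whether $\la_r$ and $(-1)^d\la_1$ agree in sign, and passing to $Q'$ flips that boundary comparison, so $C'=C-1$ in one case but $C'=C+1$ in the other (e.g.\ $d$ even and all $\la_k>0$ gives $C=0$, $C'=1$). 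In the bad case, Rolle's inequality $Z(Q)\le Z(Q')+1$ together with the inductive bound $Z(Q')\le C'=C+1$ yields only $Z(Q)\le C+2$, and iterating $d$ times can lose ground at every other step. The sentence ``Rolle's theorem then shows $Z$ drops by at least the compensating amount'' is exactly the assertion that needs proof, and plain repeated differentiation of $Q$ will not deliver it.

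The paper's proof (of the equivalent homogenized Theorem 3.2) supplies the missing device and runs a different induction: on $\bar\sigma$ alone, not on $d$ or $r$. Before differentiating, one applies a rotation of $(x,y)$ (projectively, a M\"obius change of the variable, which is why the homogeneous formulation is used -- it lets one move the point at infinity freely without creating zeros there) so that the angle $\theta=0$ separates a consecutive pair with $\la_u\la_{u+1}<0$. Then the dehomogenization $h(t)=f(1,t)$ has $h'(t)=\sum_j d\la_j\sin\theta_j(\cos\theta_j+\sin\theta_j t)^{d-1}$, and the factor $\sin\theta_j$ changes sign at exactly the same index as $\la_j$, so the derived coefficient sequence has precisely $\bar\sigma-1$ sign changes; now Rolle closes the induction with no loss. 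That alignment of the sign change of the multiplier with the sign change of the coefficients is the engine of the proof and is absent from your proposal. Your alternative route via Descartes/Chebyshev systems is a legitimate independent approach (cf.\ Karlin's Total Positivity, cited in the paper), but as written it is only named, not executed; and your $r=2$ base case would still need the multiplicity bookkeeping you defer, whereas the paper's base case $\bar\sigma=0$ is settled by a definiteness argument.
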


We shall prove an equivalent version which exploits the
homogeneity of $f$ to avoid discussion of zeros at infinity in the proof.
(The equivalence is discussed in \cite{R4}.)

\begin{theorem}
Suppose $f(x,y)$ is a non-zero real form of degree $d$ with $\tau$
real linear factors (counting multiplicity) and 
\begin{equation}  \label{E:sylrep}
f(x,y) = \sum_{j=1}^r \la_j(\cos \theta_j x + \sin \theta_j y)^d
\end{equation}
where $-\frac {\pi}2 < \theta_1 < \dots < \theta_r \le \frac{\pi}2$,
$r \ge 2$  and $\la_j \neq 0$. Suppose there are $\sigma$ sign changes
in the tuple $(\la_1,\la_2,\dots,\la_r,(-1)^d \la_1)$. Then $\tau \le
\sigma$. In particular, $\tau \le r$.
\end{theorem}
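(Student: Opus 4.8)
The plan is to prove this homogenized version by a careful induction on $r$, mirroring the classical Rolle-type argument but phrased entirely in terms of binary forms and their zeros in $\pp^1(\rr)$. First I would set up the correspondence: to the form $f(x,y)$ of degree $d$ associate the $1$-form (or rather the derivation) $D = x\frac{\partial}{\partial y} - y\frac{\partial}{\partial x}$, the infinitesimal generator of rotations, and note that $D(\cos\theta\, x + \sin\theta\, y)^d = d(\cos\theta\, x+\sin\theta\, y)^{d-1}(-\sin\theta\, x + \cos\theta\, y)$. The point of using $D$ rather than $\partial/\partial x$ is that it commutes with the $SO(2)$-action and does not introduce spurious behavior at infinity, which is exactly the technical gain the paper advertises. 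A key preliminary lemma is a homogeneous Rolle's theorem: if a real binary form $g$ of degree $d$ has $Z$ real projective zeros counted with multiplicity, then $Dg$ has at least $Z$ real projective zeros counted with multiplicity (here one must count carefully — between consecutive distinct zeros on the circle $\rr/\pi\zz$ there is a new zero of $Dg$, and a zero of multiplicity $m$ of $g$ persists with multiplicity $m-1$ in $Dg$; since the circle is closed, the number of ``gaps'' equals the number of distinct zeros, so the bookkeeping closes up exactly).

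Next I would run the induction on $r$. For the base case $r = 2$: after a rotation we may assume the representation involves two powers; one analyzes $\la_1 \ell_1^d + \la_2 \ell_2^d$ directly, checking in each parity/sign case how many real linear factors it can have versus the number of sign changes in $(\la_1,\la_2,(-1)^d\la_1)$. For the inductive step, suppose the result holds for $r-1$ summands. Given $f = \sum_{j=1}^r \la_j(\cos\theta_j x + \sin\theta_j y)^d$ with $\tau$ real linear factors and $\sigma$ sign changes in $(\la_1,\dots,\la_r,(-1)^d\la_1)$, I would apply a rotation sending one of the directions, say $\theta_r$, to the direction of $y$, i.e. arrange that one summand is $\la_r y^d$. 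Then apply the operator corresponding to ``differentiating away'' that summand: more precisely, consider $\frac{\partial}{\partial x}$ applied an appropriate number of times, or better, use the operator $\partial/\partial x$ once — since $\frac{\partial}{\partial x}(\la_r y^d) = 0$, the derivative $\partial f/\partial x$ is a form of degree $d-1$ with only $r-1$ relevant summands. By the homogeneous Rolle lemma, $\partial f/\partial x$ has at least $\tau - 1$ real linear factors (the subtlety about the zero at infinity / the $y^d$ direction is precisely why one wants the rotation-equivariant viewpoint, and I would instead phrase the whole differentiation step using $D$ or a suitably chosen linear operator to keep the count honest).

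The heart of the matter — and the step I expect to be the main obstacle — is tracking how the sign-change count $\sigma$ behaves under this reduction. When one passes from $(\la_1,\dots,\la_r,(-1)^d\la_1)$ to the coefficient tuple of $\partial f/\partial x$, the new coefficients are (up to positive factors) $\la_j \cos\theta_j$ for $j = 1,\dots,r-1$, and one must show the number of sign changes in $(\la_1\cos\theta_1,\dots,\la_{r-1}\cos\theta_{r-1},(-1)^{d-1}\la_1\cos\theta_1)$ is at most $\sigma - 1$ — or, in the boundary cases where some $\cos\theta_j = 0$ or where $\theta_r = \pi/2$ was already present, to handle the degenerate wrap-around carefully. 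This is the classical combinatorial lemma underlying Descartes/Newton-type arguments: deleting the last term while multiplying the others by the cosines (which are positive for $\theta_j \in (-\pi/2,\pi/2)$ and could vanish only at the endpoints) changes the parity factor from $(-1)^d$ to $(-1)^{d-1}$ and can only decrease the sign-change count, and it must decrease it by at least one because removing $\la_r$ merges two blocks. I would isolate this as a self-contained lemma about sign sequences and dispatch the induction: $\tau - 1 \le (\text{sign changes of } \partial f/\partial x\text{'s tuple}) \le \sigma - 1$, hence $\tau \le \sigma$; and since a tuple of length $r+1$ has at most $r$ sign changes, $\sigma \le r$, giving $\tau \le r$. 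The only real care needed is the edge cases $\theta_r = \pi/2$ (so $\cos\theta_r$ might interact with the chosen rotation) and forms $f$ with a zero ``at infinity'' relative to the naive dehomogenization — both of which the homogeneous $D$-based Rolle lemma is designed to absorb cleanly.
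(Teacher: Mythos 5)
Your overall strategy (projectivize, differentiate to kill a summand, apply Rolle, track sign changes) is the right one, but the reduction step you propose does not work, and the failure is not a boundary case that more care with $D$ will absorb. You induct on $r$ by rotating $\theta_r$ to $\pi/2$ and applying $\partial/\partial x$, which kills $\la_r y^d$ and multiplies the remaining coefficients by $d\cos\theta_j$. Since $\cos\theta_j>0$ for all surviving angles, this step cannot remove any sign change among $\la_1,\dots,\la_{r-1}$; the only changes to the sign count come from deleting $\la_r$ and flipping the wrap-around factor from $(-1)^d$ to $(-1)^{d-1}$, and these do not reliably decrease it. Concretely, take $d$ even, $r=3$, and all $\la_j>0$: then $\sigma=0$ and indeed $\tau=0$ (the form is definite), but your reduced tuple $(\la_1\cos\theta_1,\la_2\cos\theta_2,(-1)^{d-1}\la_1\cos\theta_1)$ has signs $(+,+,-)$, hence one sign change, so the inequality $\sigma_{\mathrm{new}}\le\sigma-1$ that your chain $\tau-1\le\sigma_{\mathrm{new}}\le\sigma-1$ requires is false. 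The assertion that ``removing $\la_r$ merges two blocks'' presupposes that $\la_r$ sat between two sign changes, which need not be the case.

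The paper's proof makes two different choices precisely to avoid this. First, the induction is on $\bar\sigma$, the number of sign changes within $(\la_1,\dots,\la_r)$, not on $r$: the number of summands never decreases in the reduction. Second, the differentiation is taken in the $y$-direction (dehomogenize as $h(t)=f(1,t)$ and differentiate in $t$), so the new coefficients are $d\la_j\sin\theta_j$; since $\sin\theta$ changes sign at $\theta=0$, a preliminary rotation is used to place the angle $0$ at a consecutive pair $\theta_u<0<\theta_{u+1}$ with $\la_u\la_{u+1}<0$, and then multiplication by the $\sin\theta_j$ removes exactly that one sign change while introducing no others. That alignment of the sign change of the coefficients with the sign change of the $\sin\theta_j$ is the mechanism by which the count actually drops by one per differentiation, and it is exactly what your cosine-based reduction lacks. (The base case $\bar\sigma=0$ is then handled separately: definiteness for $d$ even, and a single Rolle step with the $x$-derivative --- whose coefficients $\la_j\cos\theta_j$ are all positive, hence the derivative is definite --- for $d$ odd.) Your homogeneous Rolle lemma for $D=x\partial_y-y\partial_x$ is fine as stated, but it is not the missing ingredient; to repair the argument you must change the induction variable to the number of sign changes and redo the differentiation step along the lines above.
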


\begin{example}[Examples 1.2 and 2.1 concluded]
Since
\begin{equation*}
 \phi(x,y)  = 3x\bigl( x^2 - \tfrac{10 -
     \sqrt{70}}3y^2\bigr) \bigl( x^2 -  \tfrac{10 + \sqrt{70}}3y^2\bigr) 
\end{equation*}
is a product of five linear factors over $\rr$,  
$L_{\rr}(\phi) \ge 5$. The
representation
\begin{equation*}
6\phi(x,y)= 36x^5  -10 (x + y)^5  - 10 (x -   y)^5 +
(x + 2 y)^5 + (x - 2 y)^5. 
\end{equation*}
over $\qq$ implies that $\mathcal C(\phi) = \{3,4,5\}$.
\end{example}

\begin{proof}[Proof of Theorem 3.2]
We first ``projectivize''  \eqref{E:sylrep}:
\begin{equation}  \label{E:sylrep2}
\begin{gathered} 
2f(x,y) = \sum_{j=1}^r \la_j(\cos \theta_j x
  + \sin \theta_j 
y)^d + \\ 
\sum_{j=1}^r (-1)^d \la_d(\cos (\theta_j +\pi) x + \sin (\theta_j
+ \pi) y)^d
\end{gathered}
\end{equation}
View the sequence $(\la_1,\la_2,\dots,\la_r,(-1)^d \la_1, (-1)^d
\la_2, \dots, (-1)^d \la_r,\la_1)$ cyclically, identifying the first
and last term. There are 2$\sigma$ pairs of consecutive terms with a
negative product. It doesn't matter where one
starts, so if we make any invertible 
change of variables $(x,y) \mapsto (\cos \theta x + \sin \theta y, -\sin
\theta x + \cos \theta y)$ in \eqref{E:sylrep} (which doesn't affect
$\tau$, and which ``dials" the angles by $\theta$), 
and reorder the ``main" angles to $(-\tfrac {\pi}2, \tfrac
{\pi}2]$, the value of $\sigma$ is unchanged. 
 We may therefore assume that neither $x$ nor
$y$ divide $f$, that $x^d$ and $y^d$ are not summands in
\eqref{E:sylrep2} (i.e., 
$\theta_j$ is not a multiple of $\frac{\pi}2$), and that if there
is a sign change in $(\la_1,\la_2,\dots,\la_r)$, then $\theta_u < 0 <
\theta_{u+1}$ implies $\la_u\la_{u+1} < 0$. Under these
hypotheses, we may safely dehomogenize $f$ by setting either $x=1$ or
$y=1$ and avoid zeros at infinity and know that $\tau$ is the number
of zeros of the resulting polynomial. The rest of the proof generally
follows \cite{PS}.  

Let $\bar \sigma$ denote the number of sign changes in
$(\la_1,\la_2,\dots,\la_r)$. We induct on $\bar \sigma$. The base case
is $\bar \sigma = 0$ (and $\la_j > 0$ without loss of
generality). If  $d$ is even, then $\sigma = 0$ and 
\begin{equation*}
f(x,y) =  \sum_{j=1}^r \la_j(\cos \theta_j x + \sin \theta_j y)^d
\end{equation*}
is definite, so $\tau = 0$. If  $d$ is
odd, then $\sigma = 1$. Let $g(t) = f(t,1)$, so that 
\begin{equation*}
g'(t) =  \sum_{j=1}^r d\left(\la_j\cos \theta_j\right)
(\cos \theta_j t + \sin \theta_j)^{d-1}.
\end{equation*}
Since $d-1$ is even, $\cos\theta_j>0$ and $\la_j > 0$,
 $g'$ is definite and $g'\neq 0$. Rolle's Theorem 
implies that $g$ has at most one zero; that is, $\tau \le 1 =
\sigma$. 

Suppose the theorem is valid for $\bar \sigma = m \ge 0$ and suppose
that $\bar\sigma = m+1$ in  \eqref{E:sylrep}. Now let $h(t) =
f(1,t)$. We have
\begin{equation*}
h'(t)  = \sum_{j=1}^r d\left(\la_j\sin \theta_j\right)(\cos \theta_j +
\sin \theta_jt)^{d-1}. 
\end{equation*}
Note that $h'(t) = q(1,t)$, where
\begin{equation*}
q(x,y)  = \sum_{j=1}^r d\left(\la_j\sin \theta_j\right)(\cos \theta_jx +
\sin \theta_jy)^{d-1}. 
\end{equation*}
Since $\bar\sigma \ge 1$, $\theta_u< 0 < \theta_{u+1}$ implies 
that $\la_u\la_{u+1} < 0$, so that the number of sign changes in 
$(d\la_1\sin\theta_1, d\la_2 \sin\theta_2,\dots,d \la_r \sin\theta_r)$
is $m$, as the sign change at the $u$-th consecutive pair has been
removed, and no other possible sign changes are introduced.
 The induction hypothesis implies that $q(x,y)$ has at most
$m$ linear factors, hence $q(1,t) = h'(t)$ has  $\le m$ zeros
(counting multiplicity) and Rolle's Theorem implies that $h$ has
$\le m+1$ zeros, completing the induction.
\end{proof}

\section{Applications to forms of general degree}

We begin with a familiar folklore result:
 the vector space of complex forms $f$ in $n$ variables of degree $d$
is spanned by the set of linear forms
taken to the $d$-th power. It follows from a 1903 theorem of Biermann
(see  \cite[Prop.2.11]{R1} or \cite{R5} for a proof) that a canonical 
set of the ``right'' number of $d$-th powers over $\zz$  forms a basis:
\begin{equation}\label{E:Biermann}
\left\{ (i_1 x_1 + ... + i_n x_n)^d\ :\ 0 \le i_k \in \zz,\ i_1 +
  \cdots + i_n = d \right \}.
\end{equation}
If $f \in K[x_1,\dots,x_n]$, then $f$ is a $K$-linear combination of
these forms and so  $L_K(f) \le \binom {n+d-1}{n-1}$. We show below
(Theorems 4.10, 5.4) that when $n=2$, the bound 
for $L_K(f)$ can be improved from  $d+1$ to $d$, but this is best possible.

The first two simple results are presented explicitly for completeness. 

\begin{theorem}
If $f \in K[x,y]$, then $L_K(f) = 1$ if and only if $L_{\cc}(f) = 1$.
\end{theorem}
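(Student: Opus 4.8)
The plan is to prove the contrapositive-style equivalence directly. The forward direction is trivial: if $L_K(f)=1$ then $f=\lambda \ell^d$ for a $K$-linear form $\ell$, and this same representation witnesses $L_\cc(f)=1$. So the real content is the converse, $L_\cc(f)=1 \implies L_K(f)=1$. First I would write $f(x,y) = \lambda(\alpha x + \beta y)^d$ with $\lambda,\alpha,\beta\in\cc$ and note that, after absorbing a constant into $\lambda$, we may assume $f$ is proportional to the $d$-th power of a \emph{monic} (in $x$, say) linear form, i.e. $f(x,y) = \lambda(x + \gamma y)^d$ for some $\gamma\in\cc$ — handling separately the degenerate case where the coefficient of $x^d$ vanishes, in which case $f = \lambda y^d$ and we must have $\lambda\in K$ since it is (up to the binomial coefficient, which is $1$) a coefficient of $f$.

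In the main case, expand $f(x,y)=\lambda\sum_{j=0}^d \binom dj \gamma^j x^{d-j}y^j$, so in the notation of \eqref{E:S1} we have $a_j = \lambda\gamma^j$. Since $f\in K[x,y]$, each $a_j\in K$. From $a_0=\lambda\in K$ and $\lambda\neq 0$ we get $\gamma = a_1/a_0 \in K$. Hence the linear form $x+\gamma y$ has coefficients in $K$ and $\lambda\in K$, so the representation $f=\lambda(x+\gamma y)^d$ is already a $K$-representation, giving $L_K(f)\le 1$; and $L_K(f)\ge 1$ trivially since $f\neq 0$. Therefore $L_K(f)=1$.

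Alternatively — and perhaps cleaner, since it flags the method used repeatedly in the sequel — I would invoke Corollary 2.2: $L_\cc(f)=1$ means there is a Sylvester form of degree $1$ for $f$, namely some $h(x,y) = -\beta x + \alpha y$. The Hankel condition \eqref{E:S4} for $r=1$ reads $a_{\ell}\alpha + a_{\ell+1}\beta = 0$ for $\ell = 0,\dots,d-1$, a linear system in $(\alpha,\beta)$ with coefficients in $K$; since it has a nonzero solution over $\cc$, it has a nonzero solution over $K$, so there is a Sylvester form of degree $1$ splitting over $K$, whence $L_K(f)=1$ by Corollary 2.2.

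I do not expect a genuine obstacle here; the only point requiring a moment's care is the bookkeeping when the leading coefficient $a_0$ vanishes (equivalently when the unique linear form in the $\cc$-representation is a multiple of $y$), which is why I would dispose of that case first. Everything else is immediate from the fact that a linear system with coefficients in $K$ that is solvable over $\cc$ is solvable over $K$.
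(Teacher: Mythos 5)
Your main argument is correct and essentially identical to the paper's: the paper also splits off the case where the coefficient of $x^d$ vanishes and otherwise reads $\al^d$ and $\be/\al$ off the coefficients of $x^d$ and $dx^{d-1}y$, which is exactly your recovery of $\lambda = a_0$ and $\gamma = a_1/a_0$. Your alternative route via Corollary 2.2 is also sound (the kernel of the $K$-rational Hankel matrix $H_1(f)$ contains a nonzero $K$-vector whenever it contains a nonzero $\cc$-vector, and a nonzero linear form trivially splits over $K$), though the paper does not take that route and your labelling of the coefficients of the degree-one Sylvester form is permuted relative to \eqref{E:S2}.
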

\begin{proof}
One direction is immediate from \eqref{E:enlarge}. For the other,
suppose  $f(x,y) 
= (\al x + \be y)^d$ with $\al, \be \in \cc$.
 If $\al = 0$, then $f(x,y) = \be^d y^d$, with
$\be^d \in K$. If $\al \neq 0$, then  $f(x,y) = \al^d(x + (\be/\al)
y)^d$. Since the coefficients of $x^d$ and  $dx^{d-1}y$ in $f$ are $\al^d$
and $\al^{d-1}\be$, it follows that $\al^d$ and $\be/\al =
(\al^{d-1}\be)/\al^d$ are both in $K$.  
\end{proof}

\begin{theorem}
Any set  $\{(\al_j x + \be_j y)^d: 0 \le j \le d\}$ of pairwise distinct
$d$-th powers is linearly independent and spans the binary forms of degree $d$.
\end{theorem}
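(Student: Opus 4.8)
The plan is to prove linear independence first, since the spanning statement then follows by a dimension count: the space of binary forms of degree $d$ has dimension $d+1$, and $d+1$ linearly independent vectors in a $(d+1)$-dimensional space automatically span it. So the entire content is the linear independence of $\{(\al_j x + \be_j y)^d : 0 \le j \le d\}$ when the $d+1$ linear forms are pairwise distinct.

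For linear independence I would argue by contradiction: suppose $\sum_{j=0}^d \mu_j (\al_j x + \be_j y)^d = 0$ with not all $\mu_j$ zero, and among all such nontrivial relations choose one with the fewest nonzero coefficients; after relabeling, say $\mu_0,\dots,\mu_m$ are the nonzero ones with $m \le d$. The cleanest route is to apply a differential (apolarity) operator that annihilates all but one summand. Concretely, for each $k$ with $1 \le k \le m$ the first-order operator $\be_k \frac{\partial}{\partial x} - \al_k \frac{\partial}{\partial y}$ kills $(\al_k x + \be_k y)^d$ (since it is the directional derivative along the line where that form is constant) and sends $(\al_j x+\be_j y)^d$ to $d(\be_k\al_j - \al_k\be_j)(\al_j x + \be_j y)^{d-1}$, a nonzero multiple of $(\al_j x + \be_j y)^{d-1}$ precisely because distinctness gives $\be_k\al_j - \al_k\be_j \neq 0$ for $j \neq k$. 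Applying the composition $D = \prod_{k=1}^m (\be_k \frac{\partial}{\partial x} - \al_k \frac{\partial}{\partial y})$ — an operator of order $m$, which does not kill the degree-$d$ form $(\al_0 x + \be_0 y)^d$ since $d \ge m$ — to the relation leaves only the $j=0$ term:
\begin{equation*}
0 = D\!\left(\sum_{j=0}^m \mu_j (\al_j x + \be_j y)^d\right) = \mu_0 \cdot c \cdot (\al_0 x + \be_0 y)^{d-m}
\end{equation*}
with $c = d(d-1)\cdots(d-m+1)\prod_{k=1}^m(\be_k\al_0 - \al_k\be_0) \neq 0$, forcing $\mu_0 = 0$, a contradiction. (An equivalent and perhaps more elementary alternative: dehomogenize by a generic substitution so that no $\al_j$ vanishes, divide by $x^d$, set $t = y/x$, and observe that a relation becomes $\sum_j \mu_j \al_j^d (1 + (\be_j/\al_j)t)^d = 0$, a polynomial identity in $t$ of degree $\le d$ with $d+1$ prescribed ``almost-roots'' structure — then use the nonvanishing Vandermonde-type / confluent determinant in the $\be_j/\al_j$, which are distinct. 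I would present the operator argument as the main one since it is shortest.)

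The only point requiring a little care — the main obstacle, such as it is — is the bookkeeping around linear forms proportional to $y$, i.e. the case $\al_k = 0$: then the operator $\be_k\frac{\partial}{\partial x} - \al_k\frac{\partial}{\partial y}$ is just $\be_k \frac{\partial}{\partial x}$ (up to scalar $\frac{\partial}{\partial x}$), which still kills $y^d$ and still sends $(\al_j x+\be_j y)^d$ with $\al_j \neq 0$ to a nonzero multiple of $(\al_j x + \be_j y)^{d-1}$, so nothing actually breaks; one just notes that ``distinct'' was defined to include non-proportionality, so at most one index has $\al_k = 0$ and the wedge-product $\be_k\al_j - \al_k\be_j$ is the correct invariant in all cases. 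With linear independence in hand, the spanning claim is immediate from $\dim_{\cc}(\text{degree-}d\text{ binary forms}) = d+1$, and the proof is complete.
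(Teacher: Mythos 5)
Your proof is correct, but it takes a genuinely different route from the paper's. The paper disposes of both independence and spanning in one stroke: the $(d+1)\times(d+1)$ matrix of the set with respect to the monomial basis $\binom{d}{i}x^{d-i}y^i$ is $[\al_j^{d-i}\be_j^i]$, whose determinant is the homogeneous Vandermonde product $\prod_{j<k}(\al_j\be_k-\al_k\be_j)$, nonzero term by term precisely because the forms are pairwise non-proportional; a nonsingular change-of-basis matrix makes the set a basis, full stop. Your argument is instead the apolarity one: $\be_k\frac{\partial}{\partial x}-\al_k\frac{\partial}{\partial y}$ kills $(\al_kx+\be_ky)^d$ and scales every other summand by a nonzero wedge factor, so a product of $m\le d$ such operators isolates a single coefficient. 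That computation is right (including your treatment of $\al_k=0$, and the falling factorial $d(d-1)\cdots(d-m+1)\neq 0$), and it is in fact exactly the argument the paper sketches after Theorem 2.1 (``each linear factor in $h(D)$ kills a different summand, and dimension counting takes care of the rest'') before announcing Theorem 4.2 as a ``less mysterious explanation'' of the same fact --- so you have reconstructed the road the author deliberately avoided here. The trade-off: the determinant proof is shorter and yields spanning for free, while your operator proof generalizes to $n\ge 3$ variables, where no single Vandermonde determinant is available; note also that your minimal-counterexample framing is unnecessary, since applying $\prod_{k\neq j_0}\bigl(\be_k\frac{\partial}{\partial x}-\al_k\frac{\partial}{\partial y}\bigr)$ to any relation directly forces $\mu_{j_0}=0$ for each $j_0$.
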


\begin{proof}
The matrix of this set with respect to the basis $\binom di  x^{d-i}y^i$ is
$[\al_j^{d-i}\be_j^i]$, whose determinant is Vandermonde:
\begin{equation*}
\prod_{0 \le j < k \le d} 
 \begin{vmatrix}
\al_j & \be_j  \\
\al_k&  \be_k
\end{vmatrix}
.
\end{equation*}
This determinant is a product of non-zero terms by hypothesis.
\end{proof}

By considering the difference of two representations of a given form,
we obtain an immediate corollary about different representations of
the same form. Trivial counterexamples, formed by splitting
summands, occur in non-honest representations. 

\begin{corollary}
If $f$ has two different honest representations:
\begin{equation}\label{E:2reps}
f(x,y) = \sum_{i=1}^s \la_i(\al_i x + \be_i y)^d = \sum_{j=1}^t
\mu_j(\ga_j x + \de_j y)^d, 
\end{equation}
then $s+t\ge d+2$. If $s+t=d+2$ in \eqref{E:2reps}, then the combined set of
linear forms, $\{\al_ix+\be_iy, \ga_jx+\de_j y\}$, is
pairwise distinct. 
\end{corollary}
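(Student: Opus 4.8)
The plan is to move everything to one side of \eqref{E:2reps} and apply Theorem 4.2. Suppose $f$ has the two honest representations displayed in \eqref{E:2reps}, and suppose for contradiction that $s+t \le d+1$. Then
\begin{equation*}
0 = \sum_{i=1}^s \la_i(\al_i x + \be_i y)^d - \sum_{j=1}^t \mu_j(\ga_j x + \de_j y)^d
\end{equation*}
is a relation among the $s+t$ forms $\{(\al_i x+\be_i y)^d,\ (\ga_j x + \de_j y)^d\}$ with nonzero coefficients $\la_i$ and $-\mu_j$. If all $s+t$ linear forms involved were pairwise distinct, we could extend to a set of $d+1$ pairwise distinct $d$-th powers (using \eqref{E:Biermann}, or just by adding more generic powers) and Theorem 4.2 would force all the coefficients to vanish, a contradiction. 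So the difficulty is exactly that some $\al_i x + \be_i y$ may be proportional to some $\ga_j x + \de_j y$.

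To handle the overlaps, I would group the linear forms: since each representation is honest, no collision occurs within a representation, so a repeated linear form (up to proportionality, identifying $\ell^d$ with $(\ze_d^k\ell)^d$ as in the paper's convention) appears at most once as some $\al_i x + \be_i y$ and at most once as some $\ga_j x + \de_j y$. For each such coincident pair the two $d$-th powers are literally equal, so their contribution to the difference is $(\la_i - \mu_j)(\al_i x + \be_i y)^d$. Let $p$ be the number of such coincidences. After collecting, the difference becomes a linear combination of $m := (s+t) - 2p$ pairwise distinct $d$-th powers, with the coefficient on a ``collided'' power being $\la_i - \mu_j$ and the coefficient on an ``uncollided'' power being the original $\pm\la_i$ or $\pm\mu_j$, hence nonzero. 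Since the two representations are different and honest, not every collision can have $\la_i = \mu_j$ (otherwise, after removing the shared summands, two \emph{shorter} honest representations of the same form would remain, and iterating — or a minimality/degree bound — would eventually produce a genuinely nonzero shorter relation; more simply, if every collision cancelled then $\la_i = \mu_j$ for all collided pairs and the uncollided summands on each side would themselves be equal as forms, forcing the representations to coincide, contradiction). Hence at least one of the $m$ surviving coefficients is nonzero, giving a nontrivial dependence among $m \le s+t \le d+1$ pairwise distinct $d$-th powers.

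Now $m \le d+1$, and pairwise-distinct $d$-th powers number at most $d+1$ in any independent set only in the boundary case; but Theorem 4.2 says \emph{any} $d+1$ pairwise distinct $d$-th powers are linearly independent, so in particular any $m \le d+1$ of them are independent (pad up to $d+1$ using basis elements from \eqref{E:Biermann} chosen distinct from the ones at hand). This contradicts the nontrivial dependence, proving $s+t \ge d+2$.

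For the second assertion, suppose $s+t = d+2$ but some $\al_i x + \be_i y$ is proportional to some $\ga_j x + \de_j y$. Then $p \ge 1$, so after collecting we get a dependence (nontrivial, by the argument above) among $m = (s+t)-2p \le d$ pairwise distinct $d$-th powers — again contradicting Theorem 4.2. Hence when $s+t = d+2$ no such proportionality occurs, i.e.\ the combined set $\{\al_i x + \be_i y,\ \ga_j x + \de_j y\}$ is pairwise distinct. The main obstacle, as indicated, is the bookkeeping around coincident linear forms and verifying that the collected relation is genuinely nontrivial; this is where the honesty of both representations and their being \emph{different} are used.
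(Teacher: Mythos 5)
Your proof is correct and follows exactly the route the paper intends: Corollary 4.3 is stated there as an immediate consequence of Theorem 4.2, obtained by subtracting the two representations, and your collision bookkeeping simply fills in the details the paper leaves implicit. One harmless slip: after collecting, the number of pairwise distinct $d$-th powers appearing in the relation is $s+t-p$ (each collided pair contributes one surviving power), not $(s+t)-2p$, but since this count is still at most $d+1$ in both parts of your argument, Theorem 4.2 applies and your conclusions stand.
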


The next result collects some consequences of Corollary 4.3.
\begin{corollary}
Let $E = E_f$. 
\begin{enumerate}
\item If $L_E(f) = r \le \frac d2 + 1$, then $L_{\cc}(f) = r$, so
 $\mathcal C(f) = \{r\}$. 
 \item If, further, $L_E(f) = r \le \frac d2 +
\frac 12$, then $f$ has a unique $\cc$-minimal representation.
\item
If $d=2s-1$ and $H_s(f)$ has full rank and $f$ has
 a unique Sylvester form $h$ of degree $s$, then
$L_K(f) \ge s$, with equality if and only if $h$ splits in $K$. 
\end{enumerate}
\end{corollary}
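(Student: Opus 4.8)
The plan is to derive all three parts from the machinery already in place: Corollary 4.3 (two different honest representations of lengths $s,t$ satisfy $s+t\ge d+2$), Sylvester's Theorem 2.1 together with Corollary 2.2, and the basic monotonicity \eqref{E:enlarge}. For part (1), suppose $L_E(f)=r\le \frac d2+1$. By \eqref{E:enlarge}, $L_\cc(f)\le r$, so pick a $\cc$-minimal representation of some length $s=L_\cc(f)\le r$. If this $\cc$-representation were different from the given $E$-representation, both would be honest (minimal representations are honest, and the $E$-representation of length $r\le\frac d2+1\le d$ is honest after discarding proportional summands, which only shortens it), so Corollary 4.3 gives $r+s\ge d+2$. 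But $r\le\frac d2+1$ and $s\le r\le \frac d2+1$ force $r+s\le d+2$, with equality only if $s=r=\frac d2+1$; in the strict case $r<\frac d2+1$ we already have a contradiction, so the two representations must coincide, giving $s=r$ and $\mathcal C(f)=\{r\}$. The borderline $r=\frac d2+1$ (so $d$ even) still needs a word: here $r+s=d+2$ is allowed by Corollary 4.3, but then $s=r$ anyway, so $L_\cc(f)=r$; the representations need not be identical, which is exactly why part (2) imposes the strictly stronger bound.

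For part (2), assume $L_E(f)=r\le \frac d2+\frac12=\frac{d+1}2$. Any $\cc$-minimal representation has length $r$ by part (1). If there were two \emph{different} $\cc$-minimal representations, both honest of length $r$, Corollary 4.3 would give $2r\ge d+2$, i.e. $r\ge \frac{d}2+1>\frac{d+1}2$, contradicting the hypothesis. Hence the $\cc$-minimal representation is unique. (In particular it must be the $E$-representation furnished by part (1).)

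For part (3), assume $d=2s-1$, that $H_s(f)$ has full rank $s$, and that $f$ has a unique Sylvester form $h$ of degree $s$ (unique up to scalar). Since $H_s(f)$ is $s\times(s+1)$ of full rank, its null space is one-dimensional, spanned by the coefficient vector of $h$; thus $h$ is the \emph{only} degree-$s$ Sylvester form up to scaling, and it has distinct roots by hypothesis. By Theorem 2.1, $f$ has a representation \eqref{E:S3} of length $s$ over $\cc$, so $L_\cc(f)\le s$; and any representation of length $<s$ would, by Theorem 2.1 applied in degree $r'<s$, require a Sylvester form of degree $r'$, i.e. a nonzero null vector of the $(d-r'+1)\times(r'+1)$ Hankel matrix $H_{r'}(f)$ — but $H_{r'}(f)$ contains $H_s(f)$'s rows in a way that, since $d-r'+1=2s-r'\ge s+1>r'+1$ whenever $r'<s$, forces $H_{r'}(f)$ to have trivial kernel (its $r'+1$ columns are linearly independent because the corresponding columns of the full-rank $H_s(f)$ already are). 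Hence $L_\cc(f)=s$, and the unique Sylvester form of that degree is $h$. Now invoke Corollary 2.2: $L_K(f)$ equals the minimal degree of a Sylvester form for $f$ that splits completely over $K$. No Sylvester form of degree $<s$ exists at all, so $L_K(f)\ge s$; and $L_K(f)=s$ exactly when the unique degree-$s$ Sylvester form $h$ splits into (distinct) linear factors over $K$. This is precisely the claimed equivalence.

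The main obstacle I anticipate is the column-independence argument in part (3): one must check carefully that full rank of the \emph{square-ish} catalecticant-type matrix $H_s(f)$ genuinely propagates down to triviality of the kernel of the fatter matrices $H_{r'}(f)$ for $r'<s$, rather than merely asserting it. The clean way is to note that the columns of $H_{r'}(f)$ are (truncations of) the first $r'+1$ columns of $H_s(f)$ — or more precisely to compare the linear recurrence interpretations \eqref{E:S5} — and that a dependence among fewer columns of a matrix persists among the corresponding columns of any matrix obtained by adjoining rows; since $H_s(f)$ has no column dependence at all (full column rank would need $s+1\le s$, which fails, so one instead argues via rows: full row rank $s$ means the first $s$ columns are independent, hence so are any $r'+1\le s$ of them). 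I would spell this out with the recurrence formulation \eqref{E:inf}–\eqref{E:S5} to avoid index bookkeeping errors, since that is where a careless argument could quietly break down.
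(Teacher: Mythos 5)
Your parts (1) and (2) are correct and take essentially the paper's route through Corollary 4.3, so I will focus on part (3), where there is a genuine gap: the parenthetical claim that ``full row rank $s$ means the first $s$ columns are independent'' is false, and it fails precisely in cases covered by the hypotheses of (3). Take $d=3$, $s=2$, $f=y^3+(x+y)^3$, so $(a_0,a_1,a_2,a_3)=(1,1,1,2)$ and
\[
H_2(f)=\begin{pmatrix}1&1&1\\1&1&2\end{pmatrix}.
\]
This has full rank $2$ and its kernel is spanned by $(1,-1,0)$, giving the unique squarefree Sylvester form $h=x(x-y)$; yet the first two columns are equal, hence dependent. In general, whenever the $y^s$-coefficient of $h$ vanishes (equivalently, the minimal representation contains a $y^d$ summand), the first $s$ columns of $H_s(f)$ are dependent. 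So your deduction that $H_{r'}(f)$ has trivial kernel collapses at the step where you try to locate $r'+1$ independent columns inside $H_s(f)$. (For $r'\le s-2$ the argument can be salvaged, since a dependence among the first $r'+1$ columns would put a kernel vector with at least two trailing zeros into $\ker H_s(f)$ and hence force $x^2\mid h$; the genuinely broken case is $r'=s-1$, where one also needs the extra row of $H_{s-1}(f)$ that is not visible inside $H_s(f)$.)

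The conclusion is still true, and there are two clean repairs. One is a shift argument: if $c=(c_0,\dots,c_{r'})$ is a nonzero kernel vector of $H_{r'}(f)$, then each of the $s-r'+1\ge 2$ padded vectors $(0^k,c_0,\dots,c_{r'},0^{s-r'-k})$ for $0\le k\le s-r'$ lies in $\ker H_s(f)$ (row $i$ of $H_s(f)$ applied to such a vector equals row $i+k$ of $H_{r'}(f)$ applied to $c$, and $i+k\le d-r'$ throughout), and these vectors are linearly independent, contradicting $\dim\ker H_s(f)=1$. The other repair --- which is what the paper does, and which makes the Hankel analysis of lower degrees unnecessary --- is to note that $h$ already furnishes an honest representation of length $s$, so by Corollary 4.3 any different honest representation has length $\ge d+2-s=s+1$; hence $L_{\cc}(f)=s$, that representation is the unique $\cc$-minimal one, and Corollary 2.2 then gives $L_K(f)\ge s$ with equality iff $h$ splits over $K$. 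You already have all the tools for this second route from your own parts (1)--(2), and it sidesteps exactly the column bookkeeping you flagged as the danger spot.
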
 

\begin{proof}
We take the parts in turn.
\begin{enumerate}
\item
A different representation of $f$ over $\cc$  must have
length $\ge d+2 - r \ge \frac d2 + 1 \ge r$ by Corollary 4.3, and so
$L_{\cc}(f) = r$.
\item If $r \le \frac d2 +\frac 12$, then this
representation has length  $\ge \frac d2 + \frac 32 > r$, and so
cannot be minimal. 
\item If $d=2s-1$ and $r=s$, then the last case applies, so $f$ has a
  unique $\cc$-minimal representation, and by Corollary 2.2, this
  representation can be expressed in $K$ if and only if the Sylvester
  form splits over $K$. 
\end{enumerate}
\end{proof}

We now give some more explicit constructions of forms with multiple
lengths. We first need a lemma about cubics.  
\begin{lemma}
If  $f$ is a cubic given by \eqref{E:S1} and $H_2(f) = \begin{pmatrix}
a_0 & a_1 & a_2\\
a_1 & a_2 & a_3\\ 
\end{pmatrix}$ has rank $\le 1$, then $f$ is a cube.
 \end{lemma}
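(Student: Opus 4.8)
The plan is to unpack what "rank $\le 1$" means for the $2\times 3$ Hankel matrix $H_2(f)$ and feed the resulting relations into Sylvester's 1851 Theorem (Theorem 2.1) with $r=1$. Concretely, rank $\le 1$ forces the two rows $(a_0,a_1,a_2)$ and $(a_1,a_2,a_3)$ to be proportional, so there is a single linear form $h(x,y) = -\be x + \al y$ (taking $r=1$) annihilating $f$ in the sense of \eqref{E:S4}: we want to produce $c_0, c_1$, not both zero, with $a_0 c_0 + a_1 c_1 = 0$ and $a_1 c_0 + a_2 c_1 = 0$ and $a_2 c_0 + a_3 c_1 = 0$, i.e. a null vector of $H_2(f)^t$. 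Since $H_2(f)$ has rank $\le 1$, its transpose (a $3\times 2$ matrix) has rank $\le 1$ as well, hence a nontrivial kernel, so such $(c_0,c_1)$ exists. The corresponding $h(x,y) = c_0 x + c_1 y$ has degree $1$ and therefore trivially has distinct linear factors (there is only one), so Theorem 2.1 applies and yields $f(x,y) = \la (\al x + \be y)^d$ with $d=3$; that is, $f$ is a cube.

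The one wrinkle is the degenerate case where the only null vector of $H_2(f)^t$ is forbidden — but there is no such obstruction here, since $H_2(f)^t$ is $3 \times 2$ and automatically has a nonzero kernel; and a degree-one $h$ cannot have "repeated factors," so the hypothesis of Theorem 2.1 that $h$ be a product of pairwise distinct linear factors is automatically met. So the only thing to check carefully is that rank $\le 1$ really does give a vector killing all three dot products simultaneously, rather than just two of them. This is exactly the statement that the columns of $H_2(f)$, viewed as three vectors in $\cc^2$... no: more cleanly, rank$(H_2(f)) \le 1$ is equivalent to rank$(H_2(f)^t)\le 1$, and a rank-$\le 1$ matrix with $2$ columns has a kernel of dimension $\ge 1$. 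That is the whole content.

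Alternatively — and this may be the cleaner writeup to avoid invoking the full machinery of Theorem 2.1 for the trivial case $r=1$ — one can argue directly. If $a_0 = a_1 = a_2 = a_3 = 0$ then $f = 0$, which we may regard as (or exclude from) the cube case; otherwise some $a_i \ne 0$, and rank $\le 1$ says all $2\times 2$ minors vanish: $a_0 a_2 = a_1^2$, $a_0 a_3 = a_1 a_2$, $a_1 a_3 = a_2^2$. From these one checks that $(a_0, a_1, a_2, a_3) = a_0^{-2}(a_0, t, \dots)$-type relations hold, i.e. setting (say) $a_0 \ne 0$ one gets $a_1 = a_0 \be$, $a_2 = a_0 \be^2$, $a_3 = a_0 \be^3$ for $\be = a_1/a_0$, whence $f(x,y) = a_0(x + \be y)^3$ by \eqref{E:S1}; and the case $a_0 = 0$ forces $a_1 = 0$ (from $a_1^2 = a_0 a_2$), then $a_2 = 0$ (from $a_2^2 = a_1 a_3$), so $f = a_3 y^3$, again a cube.

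\textbf{Main obstacle.} There is essentially no obstacle; the statement is "low-hanging fruit" in the sense of the introduction. The only thing requiring a moment's care is the bookkeeping in the direct approach — making sure the chain of minor-vanishing relations genuinely pins down $(a_0,\dots,a_3)$ as a geometric progression (including correctly handling which $a_i$ is nonzero), or, if one prefers to quote Theorem 2.1, making sure the degenerate hypotheses of that theorem ($h$ with distinct factors, $r=1$) are legitimately in force. I would present the direct computation since it is shorter than setting up the Hankel/Sylvester correspondence for the one-dimensional case.
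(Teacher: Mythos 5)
Your preferred writeup (the direct computation) is exactly the paper's proof: the vanishing of the $2\times 2$ minors forces $a_0=a_1=a_2=0$ when $a_0=0$, and otherwise makes $(a_0,a_1,a_2,a_3)$ a geometric progression so that $f=a_0(x+\tfrac{a_1}{a_0}y)^3$. Your alternative route through Theorem 2.1 with $r=1$ (a nonzero null vector of the $3\times 2$ matrix $H_1(f)=H_2(f)^t$ giving a linear Sylvester form) is also correct, but the direct argument you settle on is the one the paper uses.
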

\begin{proof}
 If $a_0=0$, then $a_1=0$, so $a_2=0$ and $f$ is a cube. If $a_0 \neq
 0$, then $a_2 = a_1^2/a_0$ and $a_3 = a_1a_2/a_0 = a_1^3/a_0^2$ and $f(x,y) =
a_0(x + \frac{a_1}{a_0}y)^3$ is again a cube.
\end{proof}

\begin{theorem}
Suppose $d \ge 3$ and there exist $\al_i, \be_i \in \cc$ so that
\begin{equation}\label{E:two}
f(x,y) = \sum_{i=0}^d \binom di a_i x^{d-i}y^i = (\al_1 x + \be_1 y)^d
+ (\al_2 x + \be_2 y)^d \in K[x,y].
\end{equation}
If \eqref{E:two} is honest and $L_K(f) > 2$, then there exists $u
\in K$ with $\sqrt u \notin K$ so
that $L_{K(\sqrt u)}(f) = 2$. The summands in \eqref{E:two} are
conjugates of each other in $K(\sqrt u)$. 
\end{theorem}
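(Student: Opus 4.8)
The plan is to start from the honest two-term representation \eqref{E:two} and analyze which field the data $\al_i,\be_i$ live in. First I would observe that since \eqref{E:two} is honest with exactly two summands and $d\ge 3$, we have $d\ge 3 > 2 = L_{\cc}(f)$, so by Corollary 2.2 (Sylvester's theorem over $K$) the relevant Sylvester form $h$ of degree $2$ is essentially unique: the Hankel matrix $H_2(f)$ has rank exactly $2$ (rank $\le 1$ would force $f$ to be a cube by Lemma 4.5 when $d=3$, and an analogous degeneracy check handles $d\ge 4$ since a rank-$\le 1$ Hankel kernel of degree $2$ would give $L_{\cc}(f)\le 1$), hence its kernel is one-dimensional and spanned by a single vector $(c_0,c_1,c_2)$ with entries in $K$ (the matrix has entries in $K$). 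Thus $h(x,y)=c_0x^2+c_1xy+c_2y^2\in K[x,y]$ is determined up to a $K$-scalar, and by the forward direction of Sylvester's theorem the two linear forms $\al_ix+\be_iy$ appearing in \eqref{E:two} must be (proportional to) the two factors of this particular $h$.

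The key step is then a discriminant argument. Set $u = \operatorname{disc}(h) = c_1^2 - 4c_0c_2 \in K$ (after clearing to make $h$ primitive over $K$; the value of $u$ depends on the normalization only up to a square in $K$, which is harmless). Because \eqref{E:two} is honest, the two summands are distinct, so $h$ has distinct roots and $u\neq 0$. If $\sqrt u\in K$, then $h$ splits into distinct linear factors over $K$, and Corollary 2.2 gives $L_K(f)\le 2$, contradicting the hypothesis $L_K(f)>2$; hence $\sqrt u\notin K$. Over $K(\sqrt u)$, however, $h$ does split into distinct factors, so Corollary 2.2 yields $L_{K(\sqrt u)}(f)\le 2$, and combined with $L_{K(\sqrt u)}(f)\ge L_{\cc}(f) = 2$ from \eqref{E:enlarge} we get $L_{K(\sqrt u)}(f) = 2$. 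Finally, the two linear forms dividing $h$ are interchanged by the nontrivial automorphism of $K(\sqrt u)/K$ (since their defining coefficients are conjugate quadratic irrationalities), and because the coefficient vector $(\la_1,\la_2)$ solving the linear system \eqref{E:univer} is determined over $K(\sqrt u)$ and $f$ itself is fixed by the automorphism, the two coefficients $\la_1,\la_2$ are also swapped; thus the two summands in \eqref{E:two} are genuine $K(\sqrt u)$-conjugates of each other.

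I expect the main obstacle to be the bookkeeping around \emph{uniqueness and normalization} of the Sylvester form when $d\ge 4$ rather than $d=3$: one must be sure that $H_2(f)$ has rank exactly $2$ (not $0$ or $1$) so that the degree-$2$ Sylvester form is unique up to scalar, which is what pins the two linear forms of \eqref{E:two} to the factors of that specific $h$ and lets the conjugation argument go through. For $d=3$ this is exactly Lemma 4.5; for general $d$ one argues that rank $\le 1$ would make every degree-$2$ form a Sylvester form, forcing $L_{\cc}(f)\le 1$ and contradicting honesty of a genuine two-term representation. Once rank $2$ is established, everything else is a routine application of Corollary 2.2 and \eqref{E:enlarge}, together with the elementary fact that a monic quadratic over $K$ with nonsquare discriminant generates exactly the quadratic extension $K(\sqrt u)$ and has its two roots swapped by the Galois action.
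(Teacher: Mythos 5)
Your proposal is correct and follows the same overall strategy as the paper: identify the (unique, $K$-rational) quadratic Sylvester form $h$ attached to the two-term representation, take $u$ to be its discriminant, and let the Galois action of $K(\sqrt u)/K$ swap the two summands. The one place where you diverge is in how you certify that $h$ is unique and defined over $K$. You work with the full $(d-1)\times 3$ matrix $H_2(f)$ and assert that rank $\le 1$ would force $L_{\cc}(f)\le 1$; that assertion is true for all $d$ (a rank-$\le 1$ Hankel matrix forces the $a_j$ to be a geometric progression, so $f$ is a $d$-th power or zero), but you only gesture at it, and the cleanest quantitative justification is the minor $a_0a_2-a_1^2=\la_1\la_2(\ga_1-\ga_2)^2\neq 0$ after normalizing $\al_i=1$ --- which requires first disposing of the case $\al_i=0$. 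The paper instead sidesteps the rank question for general $d$ entirely: it handles $\al_i=0$ by hand (showing it already yields $L_K(f)\le 2$), normalizes to $f=\la_1(x+\ga_1y)^d+\la_2(x+\ga_2y)^d$, and then forms the auxiliary cubic $g=\la_1(x+\ga_1y)^3+\la_2(x+\ga_2y)^3$, whose coefficients are $a_0,\dots,a_3\in K$; Lemma 4.5 applied to $g$ gives the rank and the explicit Sylvester form $(y-\ga_1x)(y-\ga_2x)$, with $u=(\ga_1+\ga_2)^2-4\ga_1\ga_2$. Your route is more uniform (no reduction to $d=3$) at the cost of needing the general-$d$ rank lemma; the paper's route buys an explicit formula for $h$ and $u$ from the already-proved cubic case. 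Note also that your discriminant argument silently absorbs the degenerate case $\al_i=0$ (there $h$ acquires the $K$-rational factor $x$ or $y$ and so splits over $K$, contradicting $L_K(f)>2$), which is a small economy over the paper's separate treatment; and your closing remark about the $\la_i$ being swapped should be phrased, as the paper does, in terms of $\la_i=\al_i^d$ and $\ga_i=\be_i/\al_i$, since $\al_i,\be_i$ themselves need not lie in $K(\sqrt u)$.
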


\begin{proof}
 First observe that if
 $\al_2 = 0$, then $\al_2\be_1 \neq \al_1\be_2$ implies that $\al_1
\neq 0$. But then $a_0 = \al_1^d \neq 0$ and $a_1 = \al_1^{d-1}\be_1$
imply that $\al_1^d, \be_1/\al_1 \in K$ as in Theorem 4.1, and so 
\begin{equation*}
f(x,y) - \al_1^d(x + (\be_1/\al_1) y)^d = (\be_2 y)^d = \be_2^d y^d \in K[x,y].
\end{equation*}
This contradicts $L_K(f) > 2$, so $\al_2 \neq 0$; similarly, $\al_1
\neq 0$. Let $\la_i = \al_i^d$ and $\ga_i = \be_i/\al_i$ for
$i=1,2$, so $\la_1\la_2 \neq 0$ and $\ga_1 \neq \ga_2$. We have
\begin{equation*}
f(x,y) = \la_1(x + \ga_1 y)^d + \la_2(x + \ga_2 y)^d \implies a_i =
\la_1\ga_1^i+\la_2\ga_2^i. 
\end{equation*} 
Now let
\begin{equation*}
g(x,y) = \la_1(x + \ga_1 y)^3 + \la_2(x + \ga_2 y)^3 = a_0x^3 +
3a_1x^2y+3a_2xy^2+a_3y^3.  
\end{equation*}
Since $\la_i \neq0$ and \eqref{E:two} is honest, Corollary 3.5 implies
that $L_{\cc}(g) = 2$, so $H_2(g)$ has full rank by Lemma 4.5. It can be
checked directly that 
\begin{equation*}
\begin{pmatrix}
a_0 & a_1 & a_2\\
a_1 & a_2 & a_3\\
\end{pmatrix}
\cdot
\begin{pmatrix}
\ga_1\ga_2\\-(\ga_1+\ga_2)\\1 \\ 
\end{pmatrix}
=\begin{pmatrix}
0\\0
\end{pmatrix}, 
\end{equation*}
and this gives $h(x,y) =(y-\ga_1x)(y-\ga_2x)$ as the unique Sylvester
form for $g$. Since $H_2(g)$ has entries in $K$ and hence has a null
vector in $K$, we must have $h\in K[x,y]$. By hypothesis, $h$ does not
split over $K$; it must do so over $K(\sqrt u)$, where $u = 
(\ga_1-\ga_2)^2   =(\ga_1+\ga_2)^2-4\ga_1\ga_2 \in K$.    Moreover, if
$\sigma$ denotes conjugation with respect to $\sqrt u$, then $\ga_2 =
\sigma(\ga_1)$ and since $\la_1 + \la_2 \in K$, $\la_2 =
\sigma(\la_1)$ as well. Note that $\la_i = \al_i^d$ and $\ga_i =
\be_i/\al_i \in K(\sqrt{u})$, but this is not necessarily true for
$\al_i$ and $\be_i$ themselves.
  \end{proof}

\begin{corollary}
Suppose  $g \in E[x,y]$ does not split over $E$,
but factors into distinct linear factors $g(x,y) = \prod_{j=1}^r (x
+ \al_j y)$ over an extension field $K$ of $E$. If
$d > 2r-1$, then for each $\ell \ge 0$,
$$
f_{\ell}(x,y) = \sum_{j=1}^r \al_j^{\ell} (x+\al_j y)^d \in E[x,y],
$$  
and $L_K(f_{\ell}) = r < d+2-r \le L_{E}(f_{\ell})$.
\end{corollary}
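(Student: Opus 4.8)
The plan is to verify the three claims in turn, using the machinery already assembled. First I would check that each $f_\ell$ actually lies in $E[x,y]$. The coefficients of $f_\ell$ are $\sum_{j=1}^r \al_j^{\ell+i}$ for $0 \le i \le d$, i.e.\ power sums $p_{\ell+i}(\al_1,\dots,\al_r)$. Since $g(x,y) = \prod_{j=1}^r(x+\al_j y)$ has coefficients in $E$, the elementary symmetric functions $e_1,\dots,e_r$ of the $\al_j$ lie in $E$, and by Newton's identities every power sum $p_m$ is a polynomial with integer coefficients in $e_1,\dots,e_r$; hence $p_{\ell+i} \in E$ for all $i$, and $f_\ell \in E[x,y]$.

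Next I would pin down $L_K(f_\ell)$. Over $K$ the form $g$ splits into $r$ distinct linear factors, so the representation $f_\ell = \sum_{j=1}^r \al_j^\ell(x+\al_j y)^d$ exhibits $f_\ell$ as a $K$-linear combination of $r$ pairwise distinct $d$-th powers (distinct because the $\al_j$ are distinct); thus $L_K(f_\ell) \le r$. For the lower bound $L_{\cc}(f_\ell) \ge r$, I would produce a \emph{unique} Sylvester form of degree $r$: the candidate is $h(x,y) = \prod_{j=1}^r(-\al_j x + y) = y^r g(-x,y)/(\text{sign})$, rearranged so that its $c_t$ are (up to sign) the elementary symmetric functions of the $\al_j$. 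By the ``only if'' direction of Theorem~2.1, $h$ is a Sylvester form for $f_\ell$ since each linear factor $y - \al_k x$ annihilates the corresponding summand; equivalently $h(D)f_\ell = 0$. To see it is the \emph{unique} Sylvester form of degree $r$, I would argue that $H_r(f_\ell)$ — which is a $(d-r+1)\times(r+1)$ Hankel matrix with $d - r + 1 \ge r+1$ rows when $d \ge 2r$ (guaranteed by $d > 2r-1$) — has rank exactly $r$, so its nullspace is one-dimensional and spanned by the coefficient vector of $h$. The rank claim follows because $f_\ell = \sum_{j=1}^r \al_j^\ell(x+\al_j y)^d$ is an honest representation by $r$ distinct powers, and by Theorem~2.1 (applied in reverse) any null vector of $H_r(f_\ell)$ that splits with distinct factors gives a length-$r$ representation; more directly, the Hankel matrix factors through the Vandermonde-type matrix in the $\al_j$, which has rank $r$. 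Hence $h$ is the unique Sylvester form of degree $r$, so by the remark following Corollary~2.2, $L_{\cc}(f_\ell) = r$ and $L_K(f_\ell) \ge r$, giving $L_K(f_\ell) = r$.

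For the last inequality, suppose toward a contradiction that $L_E(f_\ell) = s$ for some $s < d+2-r$, i.e.\ $s + r \le d+1$. By Corollary~2.2, $L_E(f_\ell) = s$ means there is a Sylvester form $h'$ of degree $s$ for $f_\ell$ that splits completely over $E$. If $s < r$ this would contradict $L_{\cc}(f_\ell) = r$ (no Sylvester form of degree below $r$ exists). So $s \ge r$, and we have two honest representations of $f_\ell$: one of length $r$ (over $K$, with summands $(x+\al_j y)^d$) and one of length $s$ (over $E$). These are different, because the length-$r$ one involves the linear forms $x + \al_j y$ which do not all lie in $E$ (else $g$ would split over $E$), whereas — after reducing the length-$s$ representation to an honest one — the two must genuinely differ unless the $E$-representation already contains all $r$ summands $x+\al_j y$, which is impossible over $E$. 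By Corollary~4.3, two different honest representations of lengths $r$ and $s$ force $r + s \ge d+2$, i.e.\ $s \ge d+2-r$, contradicting $s < d+2-r$. Therefore $L_E(f_\ell) \ge d+2-r > r = L_K(f_\ell)$, completing the proof.

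The main obstacle I anticipate is the uniqueness of the degree-$r$ Sylvester form, equivalently the rank computation for $H_r(f_\ell)$; once that is in hand the rest is a clean application of Corollary~2.2 and Corollary~4.3. A subtlety to handle carefully is the step ruling out that the length-$s$ $E$-representation coincides with the length-$r$ $K$-representation: one must observe that a minimal (hence honest) $E$-representation of $f_\ell$ uses linear forms with coefficients in $E$, so it cannot be a rearrangement of $\{x + \al_j y\}$ since $g$ does not split over $E$; this is exactly where the hypothesis ``$g$ does not split over $E$'' is used, and it is what makes the two representations ``different'' in the sense of Corollary~4.3.
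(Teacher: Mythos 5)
Your proof is correct and follows the same essential route as the paper's (much terser) argument: Newton's theorem on symmetric functions puts $f_\ell$ in $E[x,y]$, the displayed sum gives $L_K(f_\ell)\le r$, and Corollary~4.3 supplies the lower bound on $L_E(f_\ell)$ once you observe that a minimal honest $E$-representation cannot be a rearrangement of the given one (since some $\al_j\notin E$, the summand $\al_j^{\ell}(x+\al_j y)^d$ is not a form over $E$). The one place you diverge is the lower bound $L_{\cc}(f_\ell)\ge r$, which you obtain by showing $H_r(f_\ell)$ factors as a product of Vandermonde-type matrices of rank $r$ and hence has a one-dimensional kernel, so that $\prod_j(-\al_j x+y)$ is the unique Sylvester form of degree $r$; this is sound (and gives the slightly stronger information that the degree-$r$ Sylvester form is unique), but it is not needed, since Corollary~4.3 already does the job: any honest representation of length $t<r$ is automatically different from the given honest length-$r$ one, forcing $t\ge d+2-r>r$, a contradiction. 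The only caveat -- shared with the paper's own proof, which quietly inserts the parenthetical ``$\al_s\ne 0$'' -- is that both arguments implicitly assume no $\al_j$ vanishes when $\ell\ge 1$ (otherwise a summand drops out, the representation is no longer honest of length $r$, and your diagonal matrix $D$ becomes singular); this does not affect the comparison between your proof and the paper's.
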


\begin{proof}
The coefficient of $\binom dk x^{d-k}y^k$ in $f_{\ell}$ is $\sum_{j=1}^r
\al_j^{\ell+k}$. Each such power-sum belongs to $E$ by Newton's
Theorem on Symmetric Forms. If 
$\al_s \notin E$ (which must hold for at least one $\al_s \neq 0$), 
then $\al_s^{\ell} (x+\al_s y)^d \notin E[x,y]$.   Apply Corollary 4.3.
\end{proof}

\begin{corollary}
Suppose $K$ is an extension field of $E_f$, $r \le \frac {d+1}2$, and 
\begin{equation*}
f(x,y) =  \sum_{i=1}^r \la_i(\al_i x + \be_i y)^d
\end{equation*}
with $\la_i,\al_i,\be_i \in K$. Then every automorphism of $K$ which
fixes $E_f$ permutes the summands of  the representation of $f$.
\end{corollary}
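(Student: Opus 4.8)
The plan is to exploit the uniqueness of the minimal representation guaranteed by Corollary 4.4 together with the fact that automorphisms of $K$ fixing $E_f$ act trivially on $f$ itself. First I would note that since $r \le \frac{d+1}{2}$, part (1) of Corollary 4.4 applies: the given representation has length $r = L_E(f) = L_{\cc}(f)$, and part (2) says this $\cc$-minimal representation is unique (here I am using $r \le \frac{d+1}{2} \le \frac d2 + \frac 12$). So $f$ has exactly one $\cc$-minimal representation up to the equivalences already described (reordering summands and replacing $\ell^d$ by $(\zeta_d^k\ell)^d$).

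Next, let $\phi$ be an automorphism of $K$ fixing $E_f$ pointwise. Applying $\phi$ coefficient-wise to the displayed identity gives
\begin{equation*}
f(x,y) = \phi(f)(x,y) = \sum_{i=1}^r \phi(\la_i)\bigl(\phi(\al_i) x + \phi(\be_i) y\bigr)^d,
\end{equation*}
where the left side is unchanged because every coefficient of $f$ lies in $E_f$ and is therefore fixed by $\phi$. This exhibits a second representation of $f$ of length $r$ with complex (indeed $K$-rational) coefficients. Since $r = L_{\cc}(f)$, this new representation is also $\cc$-minimal.

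By the uniqueness from Corollary 4.4(2), the two representations must coincide as honest representations: the ordered list $\bigl(\phi(\al_i)x + \phi(\be_i)y\bigr)_{i=1}^r$ is a rearrangement of $(\al_i x + \be_i y)_{i=1}^r$, up to scaling each linear form by a power of $\zeta_d$. That is precisely the assertion that $\phi$ permutes the summands of the representation. The one point requiring a little care — and the step I expect to be the only mild obstacle — is checking that the representation really is honest so that Corollary 4.4 applies in the stated form: if two summands $\la_i(\al_i x+\be_i y)^d$ and $\la_j(\al_j x + \be_j y)^d$ had proportional linear parts they could be combined, reducing the length below $r = L_K(f)$, a contradiction; hence no two linear forms are proportional and the representation is honest, so Corollary 4.4 gives uniqueness and the permutation conclusion follows.
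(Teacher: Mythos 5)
Your proof is correct and follows essentially the same route as the paper: apply the automorphism coefficient-wise to obtain a second length-$r$ representation of $f$, then invoke the uniqueness of the $\cc$-minimal representation from Corollary 4.4(2) (equivalently, Corollary 4.3, since two different honest representations of length $r\le\frac{d+1}{2}$ would force $2r\ge d+2$). Your extra remark on honesty is a reasonable way to handle an hypothesis the paper leaves implicit.
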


\begin{proof}
We interpret  $\sigma(\la(\al x + \be y)^d) =  \sigma(\la)(\sigma(\al) x
+ \sigma(\be)  y)^d$. Since  $\sigma(f) = f$, the action of $\sigma$
is to give another representation of $f$. Corollary 4.4(2) implies that
this is the same representation, perhaps reordered.
\end{proof}

This next theorem is undoubtedly ancient, but we
cannot find a suitable reference.
\begin{theorem}
If $f \in K[x,y]$, then  $L_{\cc}(f) \le \deg d$.
\end{theorem}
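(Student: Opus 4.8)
The plan is to use Sylvester's 1851 Theorem (Theorem 2.1) together with the Hankel matrix it produces. Write $f(x,y) = \sum_{j=0}^d \binom dj a_j x^{d-j}y^j$ and consider the Hankel matrix $H_r(f)$ from \eqref{E:S4} with $r = d-1$; this is a $2 \times d$ matrix, so its kernel is nonzero and contains a vector $(c_0,\dots,c_{d-1})$. The associated form $h(x,y) = \sum_{t} c_t x^{d-1-t}y^t$ has degree $d-1$ and, by the theorem, is a Sylvester form for $f$ provided it has distinct roots. If $h$ splits into $d-1$ distinct linear factors over $\cc$, then $L_{\cc}(f) \le d-1 < d$, which is even stronger than claimed.

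The obstruction is that the form $h$ coming from a kernel vector of $H_{d-1}(f)$ might have a repeated factor, so that Theorem 2.1 does not directly apply. The way around this is that the kernel of the $2 \times d$ matrix $H_{d-1}(f)$ generically has dimension at least $d-2 \ge 1$, and when it has dimension $\ge 2$ we have a whole pencil (or larger linear system) of candidate forms $h$ of degree $d-1$; a pencil of binary forms of degree $d-1 \ge 2$ contains only finitely many members (up to scalar) with a repeated root, so a generic member of the pencil has distinct roots over $\cc$, giving $L_{\cc}(f) \le d-1$. The remaining case is when $\dim \ker H_{d-1}(f) = 1$; this forces $H_{d-1}(f)$ to have rank $1$, which by the same computation as in Lemma 4.5 pins down the coefficient sequence $(a_0,\dots,a_d)$ to be (essentially) geometric, i.e. $a_j = \al^{d-j}\be^j$ up to scalar, which means $f$ is a single $d$-th power and $L_{\cc}(f) = 1 \le d$. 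Actually it is cleaner to argue rank $1$ of the $2\times d$ Hankel matrix directly: the rows $(a_0,\dots,a_{d-1})$ and $(a_1,\dots,a_d)$ being proportional forces $a_{j+1} = \rho a_j$ for a fixed $\rho$, so $f = a_0(x+\rho y)^d$.

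So the key steps, in order, are: (1) form $H_{d-1}(f)$, a $2 \times d$ Hankel matrix, and observe $\dim\ker H_{d-1}(f) \ge d-2 \ge 1$; (2) if the rank is $\le 1$, show $f$ is a $d$-th power, hence $L_\cc(f) = 1$; (3) if the rank is $2$ (so $\dim\ker \ge d-2 \ge 1$ with equality possible only when $d = 3$, where the kernel is still a line but now $h$ has degree $2$ and any quadratic kernel vector that is not a perfect square works — and if every kernel vector were a perfect square the line would be a single point in $\pp^2$ lying on the conic of squares, which is fine, a single perfect square $h = (x+\al y)^2$ still... ) — more uniformly, in the rank-$2$ case pick any $h$ in the (at least one-dimensional) kernel; the set of degree-$(d-1)$ forms with a repeated factor is a hypersurface (the discriminant locus) in the space of such forms, so it cannot contain the full linear subspace $\ker H_{d-1}(f)$ unless that subspace is a single point lying on it. That single-point case occurs only when $\dim\ker = 1$, i.e. $d = 3$; and there one checks by hand: a rank-$2$ Hankel $2\times 3$ matrix has a one-dimensional kernel spanned by $(a_1a_2 - a_0a_3,\ a_0a_3... )$ — rather than belabor this, note that in the $d=3$ case Theorem 5.2 (or a direct splitting-the-square computation on the cubic) already gives $L_\cc(f) \le 3$, and for $d \ge 4$ we have $\dim\ker H_{d-1}(f) \ge 2$ and the generic-member argument applies. (4) Conclude $L_\cc(f) \le d-1 < d$ when $d \ge 4$, and $L_\cc(f) \le 3 = d$ when $d = 3$, in all cases $L_\cc(f) \le d$. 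The main obstacle is step (3): handling the degenerate possibility that every form in $\ker H_{d-1}(f)$ has a repeated root, which I expect to resolve by the dimension count (a proper linear subspace of dimension $\ge 1$ cannot lie inside the discriminant hypersurface of binary $(d-1)$-ic forms once $d-1 \ge 2$) together with the explicit rank-$1$ analysis that identifies the only truly degenerate $f$ as a pure power.
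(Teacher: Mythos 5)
Your proposal is not correct: step (3) contains a false claim that sinks the whole strategy. You assert that the discriminant hypersurface in the space of binary forms of degree $d-1$ cannot contain a linear subspace of positive dimension. Hypersurfaces routinely contain linear subspaces; in particular, the set of degree-$(d-1)$ forms divisible by $y^2$ is a linear subspace of codimension $2$ lying entirely inside the discriminant locus. This is not a hypothetical worry. Take $f = x^{d-1}y$, i.e.\ $a_1 \ne 0$ and all other $a_j = 0$. Then
\begin{equation*}
H_{d-1}(f) = \begin{pmatrix} 0 & a_1 & 0 & \cdots & 0 \\ a_1 & 0 & 0 & \cdots & 0 \end{pmatrix},
\end{equation*}
whose kernel is $\{(0,0,c_2,\dots,c_{d-1})\}$, i.e.\ exactly the forms $h = y^2(c_2x^{d-3} + \cdots + c_{d-1}y^{d-3})$. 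Every member of this $(d-2)$-dimensional kernel has the repeated factor $y^2$, so no Sylvester form of degree $d-1$ exists, your ``generic member'' argument has nothing to pick, and indeed $L_{\cc}(x^{d-1}y) = d$ (this is precisely the computation in the paper's Theorem 5.4). So the conclusion $L_{\cc}(f) \le d-1$ for $d \ge 4$ that your argument would deliver for every non-power $f$ is simply false, and the obstruction you flagged as ``the main obstacle'' is a genuine one that cannot be dissolved by the dimension count.

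The fix is to go up one degree rather than down one: take $r = d$, so that $H_d(f)$ is the single row $(a_0,\dots,a_d)$ and the constraint is the one linear equation $\sum_{t=0}^d a_tc_t = 0$. This is what the paper does. After a linear change of variables one may assume neither $x$ nor $y$ divides $f$, so $a_0a_d \ne 0$; then $h = a_dx^d - a_0y^d$ satisfies the constraint and splits into $d$ distinct linear factors over $\cc$ (its roots are the $d$ distinct $d$-th roots of $a_0/a_d$), giving $L_{\cc}(f) \le d$ by Theorem 2.1. Your rank-$1$ analysis in step (2) is fine as far as it goes (with a little care when $a_0 = 0$), but it is not needed once the degree-$d$ Sylvester form is available.
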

\begin{proof}
By a change of variables, which does not affect the length,
we may assume that neither $x$ nor $y$
divide $f$, hence $a_0a_d \neq 0$ and $h = a_d x^d - a_0 y^d$ is a
Sylvester form which splits over $\cc$.
\end{proof}
Theorem 4.9 appears as an exercise in  Harris \cite[Ex.11.35]{Ha},
with the (dehomogenized)  maximal length occurring at $x^{d-1}(x+1)$
(see Theorem 5.4). 
 Landsberg and Teitler \cite[Cor. 5.2]{LT} prove that $L_{\cc}(f) \leq
\binom{n+d-1}{n-1} - (n-1)$, which reduces to Theorem 4.9 for
$n=2$. 

The proof given for Theorem 4.9 will not apply to all fields $K$, because
$a_d x^d - a_0 y^d$  usually does not split over $K$. A more careful argument is
required.
\begin{theorem}
If $f \in K[x,y]$, then  $L_K(f) \le \deg d$.
\end{theorem}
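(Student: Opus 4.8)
The plan is to refine the argument for Theorem 4.9 by not insisting on a single Sylvester form that splits over $K$, but instead splitting $f$ into two pieces, one of which already splits nicely over $K$. After a change of variables over $K$ (which preserves $L_K$), we may assume $a_0 a_d \neq 0$, i.e.\ neither $x$ nor $y$ divides $f$. The key move is to write $f = a_0 x^d + g$, where $g = f - a_0 x^d$ has $\deg_x g < d$, so that $y \mid g$; in fact $g(x,y) = y\cdot p(x,y)$ for some form $p$ of degree $d-1$. The term $a_0 x^d$ contributes one $d$-th power over $K$, so it suffices to show that $g = y\cdot p$ has $L_K(g) \le d-1$; inducting on $d$ would then give the bound, with the base case $d \le 2$ already handled by the remarks in the introduction (and $d=3$ handled directly). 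This is not quite immediate, because $y\cdot p$ need not be a product of linear forms over $K$, so one needs a cleverer subtraction.

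The cleaner route, which I expect the author takes, is to use Sylvester's 1851 Theorem (Corollary 2.2) directly: we must exhibit a Sylvester form for $f$ of degree $\le d$ that splits completely over $K$. The Hankel matrix $H_{d-1}(f)$ is $2 \times d$, so its kernel has dimension at least $d-2$; equivalently, the space of Sylvester forms of degree $d-1$ has dimension at least $d-2$. Rather than searching this space, subtract off one power first: choose $\la \in K$ and consider $f_\la(x,y) = f(x,y) - \la y^d$. Since the coefficient of $y^d$ in $f$ is $a_d \neq 0$, we cannot make $f_\la$ divisible by $y$, but we \emph{can} try to arrange that $f_\la$ has $x$ as a factor with high multiplicity — or, more robustly, that the catalecticant-type condition from the degree-$(d-1)$ case is met. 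Concretely: after the change of variables ensuring $a_0 a_d \ne 0$, pick $\la$ so that $f(x,y) - \la x^d$ has a prescribed behavior — the coefficient of $x^d$ becomes $0$, so $y \mid (f - \la x^d)$, giving $f - \la x^d = y \cdot q(x,y)$ with $\deg q = d-1$. Then $L_K(q) \le d-1$ by induction on the degree, and $L_K(y q) \le L_K(q)$ fails in general — so instead observe $f = \la x^d + y\, q$ and we need $L_K(yq)$, not $L_K(q)$.

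So the induction must be set up on $f$ of degree $d$ with $y \mid f$: \emph{if $f \in K[x,y]$ is divisible by $y$ and $\deg f = d$, then $L_K(f) \le d$.} Given such $f = y\, q$ with $\deg q = d-1$, write $q = q(1,0)\, x^{d-1} + (\text{terms divisible by } y)$; if $q(1,0) \neq 0$ we may further reduce, but the honest approach is: $q$ has $L_K(q) \le d-1$ by the inductive hypothesis on degree (applying either the reduced statement or Theorem 4.9's analogue), so $q = \sum_{i=1}^{d-1} \mu_i \ell_i^{d-1}$ with $\ell_i$ linear over $K$; then $f = y\,q = \sum_i \mu_i\, y\, \ell_i^{d-1}$, and each summand $y \ell_i^{d-1}$, being a binary form of degree $d$ that is a product of linear forms over $K$ (namely $y$ and $d-1$ copies of $\ell_i$), has $L_K \le d$ by the $h = a_d x^d - a_0 y^d$ trick applied after a change of variables — \emph{but} this gives $L_K(y\ell_i^{d-1}) \le d$ for each $i$, not a bound on the sum. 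That is the crux of the difficulty, and it is why Theorem 4.9 alone does not suffice.

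\textbf{The actual plan.} I would instead exploit the freedom in the kernel of $H_{d-1}(f)$. After reducing to $a_0 a_d \neq 0$, consider Sylvester forms $h$ of degree $d-1$: these form a $K$-subspace of dimension $\ge d-2 \ge 1$. Among them, $h_0(x,y) = a_d x^{d-1} - a_{d-1}\binom{d}{d-1}^{-1}\cdots$ — more precisely, the two linear equations $\sum_t a_{\ell+t} c_t = 0$ for $\ell = 0,1$ cut out the kernel. Pick any nonzero $h$ in this kernel; generically $h$ has $d-1$ distinct roots but need not split over $K$. Here is the fix: we are allowed degree exactly $d$, so take Sylvester forms of degree $d$, where $H_d(f)$ is $1 \times (d+1)$ and the kernel is $d$-dimensional. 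We need a degree-$d$ form in this $d$-dimensional space that is a product of $d$ distinct linear factors over $K$. The single equation is $\sum_{t=0}^d a_t c_t = 0$. Since $a_0 \neq 0$, solve $c_0 = -a_0^{-1}\sum_{t=1}^d a_t c_t$ and then \emph{freely} choose $c_1,\dots,c_d \in K$. Choose them so that $h(x,y) = \sum_t c_t x^{d-t} y^t$ factors as $\prod_{j=1}^d (x - r_j y)$ with $r_1,\dots,r_d$ \emph{distinct elements of $K$} (e.g.\ $r_j = j$): this fixes $c_1,\dots,c_d$ as the elementary symmetric functions, and then the constraint merely \emph{determines} $c_0$; but we also need $c_0$ to equal the value $\prod(-r_j)$ dictated by the factorization, which over-determines things. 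The resolution: use the $d$ free parameters to match both the linear constraint and the requirement of splitting — equivalently, let $h(x,y) = (x - r y)^{?}\cdots$; actually one clean choice is $h(x,y) = \prod_{j=1}^{d}(x - r_j y)$ where $r_1,\dots,r_{d-1} \in K$ are arbitrary distinct nonzero values and $r_d \in K$ is then chosen to satisfy the single linear equation $\sum_t a_t c_t = h(D)$-condition $= f$ evaluated appropriately; since that equation is linear and typically nondegenerate in $r_d$, it has a solution in $K$, and one checks $r_d \notin \{r_1,\dots,r_{d-1}\}$ for suitable initial choices. Thus $h$ splits into $d$ distinct linear factors over $K$, and Corollary 2.2 yields $L_K(f) \le d$. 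The main obstacle is precisely verifying that the free parameters can be chosen to make $h$ \emph{squarefree} and $K$-split simultaneously while meeting the Hankel constraint — a nondegeneracy check that, because $a_0 a_d \neq 0$, comes down to showing a certain linear (or low-degree) condition in one parameter is not identically satisfied by forbidden values, which holds after possibly a further generic change of variables over $K$.
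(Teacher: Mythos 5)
Your final plan is, in outline, the same as the paper's: after a change of variables arranging $a_0 \neq 0$, seek a degree-$d$ Sylvester form $h = (\text{last factor})\cdot \prod_{j=1}^{d-1}(x+\gamma_j y)$ in which $\gamma_1,\dots,\gamma_{d-1} \in K$ are chosen freely and the last linear factor is then determined by the single Hankel equation $\sum_{k} a_k c_k = 0$. But the step you yourself label ``the main obstacle'' --- verifying that the free parameters can be chosen in $K$ so that $h$ is squarefree while meeting that constraint --- is the entire mathematical content of the theorem, and you do not carry it out: ``which holds after possibly a further generic change of variables over $K$'' is an assertion, not an argument. As it stands the proof is incomplete at exactly the point where it needed to be a proof.

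Here is how the paper closes that gap. With $e_k$ the elementary symmetric functions of indeterminates $t_1,\dots,t_{d-1}$, set $\be(t) = -\sum_{k=0}^{d-1} a_k e_k(t)$ and $\al(t) = \sum_{k=0}^{d-1} a_{k+1} e_k(t)$; then the last factor $\al x + \be y$ \emph{automatically} satisfies $\sum_k a_k c_k = \al(-\be) + \be\al = 0$. (Note the homogeneous form of the last factor: your normalization $x - r_d y$ silently assumes the affine-linear equation in $r_d$ has nonzero leading coefficient, i.e.\ $\al \neq 0$, and misses the case where the correct last factor is $y$ itself.) The bad locus --- where two $t_j$ coincide, or where $\al x + \be y$ vanishes or is proportional to some $x + t_j y$ --- is contained in the zero set of the single polynomial $\Psi(t) = \prod_{j}\bigl(\al(t) t_j - \be(t)\bigr)\cdot\prod_{i<j}(t_i - t_j)$, and $\Psi \not\equiv 0$ because evaluating the first product at $t = (0,\dots,0)$ gives $(-\be(0))^{d-1} = a_0^{d-1} \neq 0$. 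Since $K \subseteq \cc$ is infinite, one can pick $(\ga_1,\dots,\ga_{d-1}) \in K^{d-1}$ with $\Psi \neq 0$, and the proof is done. Some explicit nonvanishing certificate of this kind is what your write-up is missing; separately, your first three paragraphs are false starts that you yourself identify as failing and should simply be deleted.
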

\begin{proof}
Write $f$ as in \eqref{E:S1}. If $f$ is identically zero, there is
nothing to prove. 
Otherwise, we may assume that $f(1,0) = a_0\neq 0$ after a change of variables
if necessary.  By Corollary 2.2, it suffices to find 
$h(x,y) =  \sum_{k=0}^d c_kx^{d-k}y^k$ which splits into distinct 
 linear factors over
$K$ and satisfies $\sum_{k=0}^d a_kc_k=0$.

Let $e_0 =1$ and $e_k(t_1,\dots,t_{d-1})$ denote the usual $k$-th
elementary symmetric functions. We make a number of definitions: 
\begin{equation*}
\begin{gathered}
h_0(t_1,\dots,t_{d-1};x,y) := \sum_{k=0}^{d-1}e_k(t_1,\dots,t_{d-1})x^{d-1-k}y^k =
\prod_{j=1}^{d-1} (x + t_j y),\\
 \be(t_1,\dots,t_{d-1}) := -\sum_{k=0}^{d-1}a_k
 e_k(t_1,\dots,t_{d-1}),\\  \al(t_1,\dots,t_{d-1})  :=
\sum_{k=0}^{d-1}a_{k+1} e_k(t_1,\dots,t_{d-1}), \\
\Phi(t_1,\dots,t_{d-1}) := \prod_{j=1}^{d-1} (\al
(t_1,\dots,t_{d-1})t_j - \be(t_1,\dots,t_{d-1})), \\
\Psi(t_1,\dots,t_{d-1}) := \Phi(t_1,\dots,t_{d-1})\prod_{1 \le i < j
  \le d-1}(t_i - t_j). 
\end{gathered}
\end{equation*}
Then $\be(0,\dots,0) = -a_0e_0= -a_0\neq 0$, so $\Phi(0,\dots,0) =
a_0^{d-1} \neq 0$ and $\Phi$ is not the zero polynomial, and thus
neither is $\Psi$. Choose $\ga_j \in K$, $1 \le j \le  d-1$, so that
$\Psi(\ga_1,\dots,\ga_{d-1}) \neq 0$. It follows that the $\ga_j$'s
are distinct, and  $\al \ga_j \neq \be$, where $\al =
\al(\ga_1,\dots,\ga_{d-1})$ and $\be = \be(\ga_1,\dots,\ga_{d-1})$.
Let  $e_k =e_k(\ga_1,\dots,\ga_{d-1})$.  We claim that
\begin{equation*}
\begin{gathered}
h(x,y) = \sum_{i=0}^d c_i x^{d-1}y^i :=
 (\al x + \be y)h_0(\ga_1,\dots,\ga_{d-1};x,y) = (\al x + \be
y)\prod_{j=1}^{d-1}(x + \ga_j y) \\
 =  (\al x + \be y) \sum_{k=0}^{d-1}e_kx^{d-1-k}y^k
= \al e_0 x^d  + \sum_{k=1}^{d-1} (\al e_k + \be e_{k-1})x^{d-k}y^k +
\be e_{d-1}y^d
\end{gathered}
\end{equation*}
is a Sylvester form for $f$. Note that the $\ga_j$'s are distinct and 
$\al\ga_j \neq \be$, $1 \le j \le d-1$, so that $h$ is a product of
distinct linear factors. Finally, 
\begin{equation*}
\begin{gathered}
\sum_{k=0}^d a_kc_k = \al e_0a_0 + \sum_{k=1}^{d-1} (\al e_k + \be
e_{k-1})a_k + \be e_{d-1}a_k = \\ \al \sum_{k=0}^{d-1} e_ka_k + \be
\sum_{k=0}^{d-1} e_ka_{k+1} = \al(-\be) + \be \al = 0. 
\end{gathered}
\end{equation*}
This completes the proof.
\end{proof}

\begin{corollary}
If $f$ is a product of $d$ real linear forms, then $L_{\rr}(f) = d$. 
\end{corollary}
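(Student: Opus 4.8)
The plan is to combine the two main results already established: the upper bound $L_{\rr}(f) \le d$ from Theorem 4.10, applied with $K = \rr$, and the lower bound $L_{\rr}(f) \ge \tau$ coming from Theorem 3.2, where $\tau$ is the number of real linear factors of $f$ counted with multiplicity. If $f$ is a product of $d$ real linear forms, then $\tau = d$, so the two bounds pinch together and force $L_{\rr}(f) = d$.

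There is one small gap to address: Theorem 3.2 is stated for a representation \eqref{E:sylrep} with $r \ge 2$ summands and all $\la_j \ne 0$, and it concludes $\tau \le \sigma \le r$. So to apply it I would take a minimal $\rr$-representation of $f$, say of length $r = L_{\rr}(f)$; by minimality it is honest and all coefficients are nonzero, and after a rotation of coordinates (which changes neither $\tau$ nor the length) I may assume the linear forms are written as $\cos\theta_j x + \sin\theta_j y$ with $-\tfrac{\pi}{2} < \theta_1 < \dots < \theta_r \le \tfrac{\pi}{2}$, matching the hypotheses of Theorem 3.2. I should handle the degenerate case $r = 1$ separately: if $L_{\rr}(f) = 1$ then $f = (\al x + \be y)^d$, which has only one distinct real linear factor, contradicting that $f$ is a product of $d \ge 3$ pairwise... wait, actually the factors need not be distinct — but a $d$-th power has $\tau = d$ with only one distinct factor, and indeed $d \ge 3 > 1$, so one can note separately that a nonzero $d$-th power does trivially have length $1$; however the statement of the corollary is still fine since such an $f$ \emph{is} a product of $d$ real linear forms yet has length $1 \ne d$. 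So I must be slightly careful: the corollary as literally stated would be false for $f = x^d$. I expect the intended reading is that $f$ is a product of $d$ \emph{pairwise distinct} real linear forms, or at least that the natural argument gives $L_{\rr}(f) = d$ whenever $\tau = d$ and $L_{\rr}(f) \ge 2$; I would state the proof under the reading that makes it correct, i.e. invoking Theorem 3.2 for $r = L_{\rr}(f) \ge 2$ to get $d = \tau \le r$, hence $r = d$ by Theorem 4.10.

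So the key steps, in order, are: (1) By Theorem 4.10 with $K = \rr$, $L_{\rr}(f) \le d$. (2) Let $r = L_{\rr}(f)$ and take a minimal, hence honest, representation; since $f$ is a product of $d \ge 3$ real linear forms it is not a $d$-th power of a single form in the relevant case, so $r \ge 2$. (3) Rotate coordinates so the representation has the form \eqref{E:sylrep} with strictly increasing angles in $(-\tfrac{\pi}{2}, \tfrac{\pi}{2}]$ and nonzero coefficients; the number $\tau$ of real linear factors is unaffected and equals $d$. (4) Apply Theorem 3.2 to conclude $d = \tau \le r$. (5) Combine with (1): $d \le r = L_{\rr}(f) \le d$, so $L_{\rr}(f) = d$.

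The main obstacle is essentially bookkeeping rather than mathematics: making sure the minimal real representation can be massaged into exactly the normalized shape demanded by Theorem 3.2 (distinct angles in a half-open interval of length $\pi$, all $\la_j \ne 0$), which follows from honesty of a minimal representation plus the rotation invariance already noted in the proof of Theorem 3.2, and handling the trivial low-length case so the hypothesis $r \ge 2$ of Theorem 3.2 is genuinely met. Once those are in place the two inequalities simply sandwich the length.
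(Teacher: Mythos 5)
Your proof is correct and essentially identical to the paper's: the paper likewise sandwiches $L_{\rr}(f)$ between the lower bound $d = \tau \le \sigma \le r$ from Theorem 3.2 and the upper bound $L_{\rr}(f) \le d$ from Theorem 4.10. Your side remark about the degenerate case $f = \la\,\ell^d$ (where the statement as literally written fails, since the length is $1$, and the $r \ge 2$ hypothesis of Theorem 3.2 is not met) is a fair catch that the paper's one-line proof silently glosses over.
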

\begin{proof}
Write $f$ as a sum of $L_{\rr}(f) = r \le d$ $d$-th powers and rescale
into  the shape \eqref{E:sylrep}. Taking $\tau = d$ in  Theorem 3.2,
we see that $d \le \sigma \le r$. 
\end{proof}

\begin{conjecture}
If $f \in \rr[x,y]$ is a form of degree $d \ge 3$, then $L_{\rr}(f) = d$ if
and only if $f$ is a product of $d$ linear forms.
\end{conjecture}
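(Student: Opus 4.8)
One direction is already done: Corollary 4.11 gives that if $f$ is a product of $d$ real linear forms then $L_{\rr}(f)=d$. So the content of the conjecture is the converse: if $f$ has a repeated factor or a non-real factor (equivalently, $f$ is \emph{not} a product of $d$ linear forms over $\rr$), then $L_{\rr}(f)\le d-1$. The plan is to produce, via Corollary 2.2, a Sylvester form $h$ of degree $d-1$ for $f$ that splits into distinct linear factors over $\rr$; this immediately gives $L_{\rr}(f)\le d-1$.

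First I would reduce to a normal form. If $f$ has a real linear factor $\ell$ of multiplicity $\ge 2$, a change of variables makes $\ell=y$, so $y^2\mid f$, i.e. $a_d=a_{d-1}=0$. If instead $f$ has a pair of complex-conjugate factors, after a real change of variables we may take that factor to be $x^2+y^2$, so $f=(x^2+y^2)g$ for a real form $g$ of degree $d-2$. In either situation I want to exhibit a real $h$ of degree $d-1$ in the kernel of the Hankel map (i.e.\ satisfying $\sum_{t=0}^{d-1}a_{\ell+t}c_t=0$ for $\ell=0,1$) whose $d-1$ roots are real and distinct. The Hankel matrix $H_{d-1}(f)$ is $2\times d$, so its kernel has dimension $\ge d-2$; the real question is whether that kernel — a linear subspace of real binary forms of degree $d-1$ — contains a form with $d-1$ distinct real roots. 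Generically a form in a positive-dimensional linear system of binary $(d-1)$-forms does split over $\rr$ only if the system is not "too negative", so some care is needed; this is the crux.

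The cleanest route, I expect, is an interpolation/perturbation argument analogous to the proof of Theorem 4.10. In the repeated-real-root case ($a_d=a_{d-1}=0$), the kernel of $H_{d-1}(f)$ contains $y^{d-1}$ and also contains, for suitable coefficients, forms of the shape $(\alpha x+\beta y)\prod_{j=1}^{d-2}(x+\gamma_j y)$ with the $\gamma_j$ free real parameters and $\alpha,\beta$ determined (as in Theorem 4.10, with $d$ replaced by $d-1$ and using only the two equations $\ell=0,1$): the constraint is two linear equations, and with $d-2\ge 1$ free real parameters $\gamma_j$ one can choose them distinct, nonzero, and avoiding the finitely many bad loci where $\alpha\gamma_j=\beta$ or where $\alpha,\beta$ both vanish. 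That yields a real Sylvester form of degree $d-1$ splitting completely over $\rr$, so $L_{\rr}(f)\le d-1$. In the conjugate-pair case $f=(x^2+y^2)g$, I would try the same device, or alternatively split off $x^2+y^2$ using a known short representation of $(x^2+y^2)$-multiples (cf.\ the discussion around Corollary 5.6) and combine with a length-$(d-2)$ representation of $g$ obtained by induction on $d$ — the base cases $d=3,4$ being Theorems 5.2 and 5.3.

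The main obstacle is exactly the real-splitting of the Sylvester form: over $\cc$ the dimension count in Theorem 4.9 is automatic, but over $\rr$ one must prove the relevant linear system of $(d-1)$-forms actually contains a totally real, squarefree member, and rule out the possibility that every real form in the system is indefinite with complex roots. I expect this to require the explicit parametrized family above (so that total reality is built in by construction) rather than an abstract genericity statement, together with a short argument — using the hypothesis that $f$ is not a product of $d$ real linear forms — that the parameter space is nonempty, i.e.\ that the two linear conditions do not force $\alpha=\beta=0$. Handling the conjugate-pair case uniformly with the repeated-factor case, ideally by a single induction on $d$ with $d=3,4$ supplied by Theorems 5.2 and 5.3, is the other point that will need care.
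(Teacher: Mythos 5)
This statement is Conjecture 4.12: the paper does not prove it. What the paper establishes is the ``if'' direction (Corollary 4.11, via Theorems 3.2 and 4.10) and the cases $d=3,4$ (Theorems 5.2 and 5.3); the general converse is left open, and Section 6 explicitly lists it among the open questions. So there is no proof of the paper's to compare yours against, and a complete argument here would be new mathematics. Your proposal correctly cites Corollary 4.11 for one direction and correctly identifies the crux of the other (producing a degree-$(d-1)$ Sylvester form in the kernel of the $2\times d$ matrix $H_{d-1}(f)$ that splits into \emph{distinct real} factors), but it does not overcome that obstacle -- the sketch is explicitly hedged at exactly the point where the difficulty lives.

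More seriously, your case division rests on a mis-negation. ``$f$ is a product of $d$ linear forms over $\rr$'' counts multiplicity, so its negation is ``$f$ has a pair of non-real conjugate roots,'' \emph{not} ``$f$ has a repeated factor or a non-real factor.'' A form with a repeated real factor, such as $x^{d-1}y$, is still a product of $d$ real linear forms, and by Corollary 4.11 (indeed by Theorem 5.4, already over $\cc$) it has length exactly $d$. Your repeated-real-root branch proposes to exhibit a squarefree real Sylvester form of degree $d-1$ for such an $f$; for $f=dx^{d-1}y$ the kernel of $H_{d-1}(f)$ forces $c_0=c_1=0$, so every degree-$(d-1)$ candidate is divisible by $y^2$ and no such form exists. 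That branch of the plan therefore cannot be carried out, and if it could it would contradict the theorems you are building on. The only case you actually need is $f=(x^2+y^2)g$ after a real change of variables, and that is precisely the case your proposal treats most vaguely (``I would try the same device, or alternatively\dots''). The perturbation scheme borrowed from Theorem 4.10 does not transfer: there one only needs the factors $x+\gamma_j y$ distinct with $\gamma_j$ ranging over all of $K$, whereas here the two linear conditions determine $\al,\be$ up to scale from the $\ga_j$'s, and nothing in that construction prevents the resulting kernel element from being forced to have non-real roots. Establishing that the kernel of $H_{d-1}(f)$ meets the (non-convex, non-linear) locus of totally real squarefree forms whenever $f$ has a non-real root is the open problem itself.
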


We shall see in Theorems 5.2 and 5.3 that this conjecture is true for
$d=3,4$.

\section{Applications to forms of particular degree}

Corollary 4.3 and Theorem 4.10 impose some immediate 
restrictions on the possible cabinets of a form of degree $d$.

\begin{corollary}
Suppose $\deg f =d$. 
\begin{enumerate}
 
\item If $L_{\cc}(f) = r$,  then $\mathcal C(f) \subseteq \{r, d-(r-2),
  d-(r-3), \dots, d\}$. 

\item If $L_{\cc}(f)=2$, then $\mathcal C(f)$ is
  either $\{2\}$ or $\{2,d\}$.

\item If $f$ has $k$ different lengths, then $d \ge 2k-1$.

\item If $f$ is cubic, then  $\mathcal C(f) = \{1\}, \{2\}, \{3\}$ or
$\{2,3\}$.

\item If $f$ is quartic, then  $\mathcal C(f) = \{1\}, \{2\}, \{3\},
  \{4\}, \{2,4\} $ or $\{3,4\}$.
\end{enumerate}
\end{corollary}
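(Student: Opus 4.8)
The plan is to prove the five parts in order, each following quickly from Corollary 4.3, Corollary 4.4, Theorem 4.10, and (for the last two parts) the list of values in $\{1,\dots,d\}$ that can occur as a length.

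First I would establish (1) directly. If $L_{\cc}(f) = r$, take any length $s \in \mathcal C(f)$, realized by a $K$-minimal representation for some $E_f \subseteq K \subseteq \cc$; by \eqref{E:enlarge} we have $s \ge r$. If $s = r$ we are done, so suppose $s > r$. Then the $K$-minimal representation of length $s$ and any $\cc$-minimal representation of length $r$ are honest and different (different lengths force different summand-sets), so Corollary 4.3 gives $r + s \ge d+2$, i.e.\ $s \ge d - (r-2)$. Hence every element of $\mathcal C(f)$ other than $r$ lies in $\{d-(r-2), d-(r-3), \dots, d\}$ (the upper end being Theorem 4.10), which is exactly the claim. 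Part (2) is the special case $r = 2$: then $\mathcal C(f) \subseteq \{2\} \cup \{d\}$, and since $\mathcal C(f)$ is always nonempty and contains $L_{\cc}(f) = 2$, it is either $\{2\}$ or $\{2,d\}$. Part (3): if $f$ has $k$ distinct lengths $r = r_1 < r_2 < \dots < r_k$ with $r = L_{\cc}(f)$, then by (1) each $r_i$ with $i \ge 2$ satisfies $d - (r-2) \le r_i \le d$, so there are at most $r - 1$ possible values above $r$, giving $k - 1 \le r - 1$, i.e.\ $k \le r$. Combined with $r \le d - (r-2)$ when $k \ge 2$ (so that the interval is nonempty and disjoint from $\{r\}$ — actually one only needs $r < d-(r-2)$ is not required; rather $k-1$ values must fit in an interval of length $r-1$ \emph{and} all exceed $r$), we get $r \ge k$ and $d \ge 2r - 2 \ge 2k - 2$; to sharpen to $d \ge 2k-1$ I would note that if $d = 2k-2$ then $r = k$ forces $d - (r-2) = k = r$, contradicting that the extra lengths strictly exceed $r$, so in fact $d \ge 2k-1$.

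For parts (4) and (5) I would first record the elementary fact that a form of degree $d \ge 3$ can only have length $1$ if it is a $d$-th power, so $\mathcal C(f) = \{1\}$ in that case (length cannot increase past $1$ by Theorem 4.1), and otherwise $L_{\cc}(f) \ge 2$ and all lengths lie in $\{2,3,\dots,d\}$ by Theorem 4.10. For $d = 3$: if $L_{\cc}(f) = 2$ then (2) gives $\{2\}$ or $\{2,3\}$; if $L_{\cc}(f) = 3$ then by (1) $\mathcal C(f) \subseteq \{3, 3-(3-3),\dots\} = \{3\}$. So the only possibilities are $\{1\},\{2\},\{3\},\{2,3\}$. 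For $d = 4$: if $L_{\cc}(f) = 2$, part (2) gives $\{2\}$ or $\{2,4\}$; if $L_{\cc}(f) = 3$, part (1) gives $\mathcal C(f) \subseteq \{3, d-(r-2)\} = \{3,4\}$ (and $d-(r-2) = 4-1 = 3$, so actually $\mathcal C(f) \subseteq \{3,4\}$), yielding $\{3\}$ or $\{3,4\}$; if $L_{\cc}(f) = 4$, part (1) gives $\mathcal C(f) \subseteq \{4\}$. Together with $\{1\}$ this is the stated list.

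The one step requiring a little care — and the main (mild) obstacle — is the sharpening in part (3) from $d \ge 2k-2$ to $d \ge 2k-1$, and more generally making sure the "extra" lengths in part (1) are all \emph{strictly} greater than $r = L_{\cc}(f)$ so that the counting in (3) is tight; this is where one must use that distinct lengths are genuinely distinct and that $L_{\cc}(f)$ is the minimum, so no length equals $r$ except $r$ itself, and the remaining $k-1$ values must fit into $\{d-(r-2),\dots,d\}$ while all exceeding $r$. Everything else is a direct bookkeeping consequence of Corollary 4.3 and Theorem 4.10, with the $d=3,4$ cases obtained simply by substituting the admissible values of $L_{\cc}(f)$ into part (1).
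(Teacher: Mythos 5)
Your overall route is the paper's: Corollary 5.1 is stated there without a separate proof, as an immediate consequence of Corollary 4.3 and Theorem 4.10, and your parts (1), (2), (4), (5) carry this out correctly. (The one point worth making explicit in (1) --- that two honest representations of different lengths are automatically \emph{different} in the paper's sense, so Corollary 4.3 applies --- you do address.)

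The only genuine problem is the middle of part (3). The inequality $d \ge 2r-2$, equivalently $r \le d-(r-2)$, is not justified by anything you prove and is false in general: Corollary 4.3 by itself does not exclude, say, a cabinet $\{d-1,d\}$ with $d \ge 5$, for which $r = d-1$ and $2r-2 = 2d-4 > d$. Consequently the ``sharpening'' step, which uses $d = 2k-2$ together with $d \ge 2r-2$ to force $r=k$, does not go through as written. Fortunately the statement follows at once from facts you have already established: the $k-1$ lengths other than $r$ are distinct integers, each strictly greater than $r$ (since $r = L_{\cc}(f)$ is the minimum possible length) and each at most $d$ (Theorem 4.10), so $k-1 \le d-r$; combined with $k \le r$ (your count of the $r-1$ elements of $\{d-(r-2),\dots,d\}$) this gives $d \ge r+k-1 \ge 2k-1$ directly, with no case analysis. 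I would replace the paragraph around ``$d \ge 2r-2 \ge 2k-2$'' with this two-line count.
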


We now completely classify $L_K(f)$ when $f$ is a binary cubic.

\begin{theorem}
Suppose $f(x,y) \in E_f[x,y]$ is a cubic form with discriminant
$\Delta$ and suppose  $E_f \subseteq K  \subseteq \cc$.
\begin{enumerate}
\item If $f$ is a cube, then $L_{E_f}(f) = 1$ and $\mathcal C(f) = \{1\}$.
\item If $f$ has a repeated linear factor, but is not a cube, then 
$L_K(f) = 3$ and $\mathcal C(f) = \{3\}$.
\item If $f$ does not have a repeated factor, then $L_K(p) = 2$ if
$\sqrt{-3\Delta} \in K$ and $L_K(p) = 3$ otherwise, so either
$\mathcal C(f) = \{2\}$ or $\mathcal C(f) = \{2,3\}$. 
\end{enumerate}
\end{theorem}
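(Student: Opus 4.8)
The plan is to handle the three cases in turn, exploiting Sylvester's 1851 Theorem (Theorem 2.1) and Corollary 2.2, together with the low-degree consequences of Corollary 4.3 already packaged in Corollary 4.4 and Corollary 5.1(iv).

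\emph{Case (1).} If $f$ is a cube, then $L_{\cc}(f)=1$, and by Theorem 4.1 we get $L_{E_f}(f)=1$; since length cannot decrease, $\mathcal C(f)=\{1\}$. This case is immediate.

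\emph{Case (2).} Suppose $f$ has a repeated linear factor but is not a cube. First I would note $L_{\cc}(f)\neq 1$ by Case (1)'s argument run in reverse, so $L_{\cc}(f)\geq 2$. To rule out $L_{\cc}(f)=2$, I would invoke Sylvester: a representation of length $2$ would force a Sylvester form $h$ of degree $2$ in the kernel of $H_2(f)$ splitting into \emph{distinct} factors. After a $K$-change of variables put the repeated factor at $y$, so $f(x,y)=y^2(\alpha x+\beta y)$ with $\alpha\neq 0$ (else $f$ is a cube); computing the Hankel matrix $H_2(f)$ from the normalized coefficients, one finds it has rank $\leq 1$ (this is essentially Lemma 4.5 read in the contrapositive: full rank of $H_2$ would give $L_{\cc}(f)=2$, and conversely rank $\leq 1$ forces $f$ to be a cube, contradiction — so in fact $H_2(f)$ has rank exactly $\leq 1$ here precisely when $f$ is a cube). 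The cleaner route: by Lemma 4.5, $L_{\cc}(f)=2$ implies $H_2(f)$ has full rank, whose unique (up to scalar) null vector gives a Sylvester form $h$; if that $h$ has a repeated factor, Gundelfinger's theorem says $f=\ell^{3}$, a cube. One checks directly that for $f=y^2(\alpha x+\beta y)$ the degree-$2$ Sylvester form is $x^2$ (a repeated factor), so no honest length-$2$ representation exists; hence $L_{\cc}(f)\geq 3$, and with Theorem 4.10 (or Theorem 4.9) $L_{\cc}(f)=L_K(f)=3$, giving $\mathcal C(f)=\{3\}$.

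\emph{Case (3), the main step.} Assume $f$ has no repeated factor, so $\Delta\neq 0$. By Lemma 4.5, $H_2(f)$ has full rank $2$ (rank $\leq 1$ would make $f$ a cube), so its kernel is one-dimensional, spanned by a single Sylvester form $h\in E_f[x,y]$ of degree $2$ — unique up to scalar. By Corollary 2.2, $L_K(f)=2$ iff this $h$ splits over $K$, which happens iff $\sqrt{\operatorname{disc}(h)}\in K$, and otherwise (since $L_{\cc}(f)\le 3$ always and $L_K(f)\le 3$ by Theorem 4.10) $L_K(f)=3$. So the crux is the identity $\operatorname{disc}(h)=-3\Delta$ up to a nonzero square in $E_f$. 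The hardest — though purely computational — part is verifying this discriminant relation. I would do it by writing $f=\sum \binom 3i a_i x^{3-i}y^i$, taking the explicit null vector of $H_2(f)=\begin{pmatrix}a_0&a_1&a_2\\a_1&a_2&a_3\end{pmatrix}$, namely $(c_0,c_1,c_2)=(a_1a_3-a_2^2,\ a_1a_2-a_0a_3,\ a_0a_2-a_1^2)$ (the $2\times 2$ minors with signs), so $h(x,y)=c_0x^2+c_1xy+c_2y^2$, and then computing $\operatorname{disc}(h)=c_1^2-4c_0c_2$ and comparing with the classical cubic discriminant $\Delta$ expressed in the $a_i$ (namely $\Delta = a_0^2a_3^2 - 6a_0a_1a_2a_3 + 4a_0a_2^3 + 4a_1^3a_3 - 3a_1^2a_2^2$, up to normalization). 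A short expansion gives $c_1^2-4c_0c_2 = -3\Delta$ exactly. Since $\sqrt{\operatorname{disc}(h)}\in K \iff \sqrt{-3\Delta}\in K$, we conclude $L_K(f)=2$ iff $\sqrt{-3\Delta}\in K$ and $L_K(f)=3$ otherwise; thus $\mathcal C(f)=\{2\}$ when $\sqrt{-3\Delta}\in E_f$ and $\mathcal C(f)=\{2,3\}$ otherwise (the latter realized by taking $K$ not containing $\sqrt{-3\Delta}$, e.g.\ $K=E_f$ itself), using Corollary 5.1(iv) to know these are the only options. I expect no obstacle beyond bookkeeping in the discriminant computation; one should be slightly careful that the sign and the stray square factor match the normalization of $\Delta$ used elsewhere in the paper, but any fixed choice only changes $-3\Delta$ by a square in $E_f$, which does not affect whether its square root lies in $K$.
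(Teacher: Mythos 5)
Your proposal is correct and follows essentially the same route as the paper: case (2) is handled by normalizing the repeated factor and observing that the unique degree-$2$ Sylvester form is a perfect square, and case (3) by taking the null vector $(a_1a_3-a_2^2,\ a_1a_2-a_0a_3,\ a_0a_2-a_1^2)$ of $H_2(f)$ and matching $c_1^2-4c_0c_2$ with $-3\Delta$ modulo squares, with Theorem 4.10 supplying the upper bound $L_K(f)\le 3$. One bookkeeping note: $c_1^2-4c_0c_2$ equals your displayed polynomial on the nose, and that polynomial is $-\Delta/27=-3\Delta/9^2$ (the paper's identity), not $-3\Delta$ exactly --- but as you anticipated, the discrepancy is a square factor and does not affect the conclusion.
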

\begin{proof}
The first case follows from Theorem 4.1. In the second case,
after an invertible linear change of variables, we may assume that
$f(x,y) = 3x^2y$, and apply Theorem 2.1 to test for representations of
length 2. But 
\begin{equation}\label{E:x2y}
\begin{pmatrix}
0 & 1 & 0\\
1 & 0 & 0\\ 
\end{pmatrix}
\cdot
\begin{pmatrix}
c_0 \\ c_1 \\ c_2
\end{pmatrix}
=\begin{pmatrix}
0\\0
\end{pmatrix} \implies c_0  = c_1 = 0,
\end{equation}
so  $h$ has repeated factors. Hence
$L_K(x^2y) \ge 3$ and by Theorem 4.10, $L_K(x^2y) = 3$.

Finally, suppose 
\begin{equation*}
f(x,y) = a_0x^3+3a_1x^2y+3a_2xy^2+a_3y^3 = \prod_{j=1}^3 (r_j x + s_j y)
\end{equation*}
does not have repeated factors, so that
\begin{equation*}
0 \neq \Delta(f) = \prod_{j < k} (r_js_k - r_k s_j)^2,
\end{equation*}
and consider the system:
\begin{equation*}
\begin{pmatrix}
a_0 & a_1 & a_2\\
a_1 & a_2 & a_3\\ 
\end{pmatrix}
\cdot
\begin{pmatrix}
c_0 \\ c_1 \\ c_2
\end{pmatrix}
=\begin{pmatrix}
0\\0
\end{pmatrix} 
\end{equation*}
By Lemma 4.5, this system has
rank 2;  the unique Sylvester form is
\begin{equation*}
h(x,y) = (a_1a_3 - a_2^2)x^2 + (a_1a_2 - a_0a_3) xy + (a_0a_2-a_1^2)y^2,
\end{equation*}
which happens to be the Hessian of $f$.
Since $h \in E_f[x,y] \subseteq K[x,y]$, it splits over $K$ if and only
if its  discriminant is a square in $K$. 
A computation shows that 
\begin{equation*}
(a_1a_2 - a_0a_3)^2 - 4(a_1a_3 - a_2^2)(a_0a_2-a_1^2) = -
\frac{\Delta(f)}{27} = -\frac{3\Delta(f)}{9^2}.
\end{equation*}
Thus, $L_K(f) = 2$ if and only if $\sqrt{-3\Delta(f)} \in K$.
If $h$ does not split over $F$, then $L_F(f) = 3$ by Theorem 4.10.
\end{proof}
In particular, $x^3$, $x^3+y^3$, $x^2 y$ and $(x+iy)^3 + (x-iy)^3$ have the
cabinets enumerated in Corollary 5.1(4).
If $f$ has three distinct real linear factors, then $\Delta(f) >
0$, so $\sqrt{-3\Delta(f)} \notin \rr$ and $L_{\rr}(f) = 3$. If $f$ is
real and has one real 
and two conjugate complex linear factors, then $\Delta(f) < 0$, so $L_{\rr}(f) =
2$. Counting repeated roots, we see that if $f$ is a real cubic, and
not a cube, then $L_{\rr}(f) = 3$ if and only if it has three real
factors, thus proving Conjecture 4.12 when $d=3$.

\begin{example}
We find all representations of $3x^2y$ of length 3. Note that
\begin{equation*}
H_3(f) \cdot (c_0,c_1,c_2,c_3)^t = (0) \iff c_1 = 0 \iff
h(x,y) = c_0x^3 + c_2 xy^2 + c_3y^3. 
\end{equation*}
If $c_0 = 0$, then $y^2 \ | \ h$, which is to be avoided,
so we  scale and assume $c_0=1$.
We can parameterize the Sylvester forms  $h(x,y)=(x - a y)(x - b y)(x
+ (a+b)y)$ with 
$a,b,-(a+b)$ distinct. This leads to an easily checked  general formula
\begin{equation}\label{E:3reps}
\begin{gathered}
3(a-b)(a+2b)(2a+b)x^2 y = \\ (a + 2 b) (a x + y)^3 - (2 a + b) (b x +
y)^3 + (a -  b) (-(a + b) x + y)^3.
\end{gathered}
\end{equation}
It is not hard to find analogues of \eqref{E:3reps} for $d > 3$; we leave this
to the reader.
\end{example}

\begin{theorem}
If $f$ is a real quartic form, then $L_{\rr}(f) = 4$ if
and only if $f$ is a product of four linear factors. 
\end{theorem}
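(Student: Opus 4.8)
The plan is to prove the two implications separately. For the easy direction, if $f$ is a product of four real linear factors, then $\tau = 4$ in Theorem 3.2, and Corollary 4.11 gives $L_{\rr}(f) = 4$ directly (this is the specialization already recorded for the real case). So the content of the theorem is the converse: if $f$ is a real quartic that is \emph{not} a product of four real linear factors, then $L_{\rr}(f) \le 3$. I would reduce to canonical forms by the usual real classification of quartic forms under $GL_2(\rr)$: up to a real linear change of variables (which preserves length and the number of real factors), a real quartic that is not a product of four real linear forms falls into one of finitely many orbit types, distinguished by their factorization pattern over $\cc$ — e.g.\ a positive or negative definite quartic with two conjugate pairs of roots, a quartic with two real roots and one conjugate pair, a quartic with one conjugate pair of double roots, a quartic with a real double root and a conjugate pair, a real triple-plus-simple root, a real quadruple root, and so on. In each case I must exhibit a representation of length $\le 3$, and by Corollary 2.2 it suffices to produce, in each case, a Sylvester form of degree $3$ (a binary cubic annihilated by $H_3(f)$) that splits into three distinct linear factors over $\rr$.

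The mechanism is this: $H_3(f)$ is the $2\times 4$ Hankel matrix $\begin{pmatrix} a_0 & a_1 & a_2 & a_3 \\ a_1 & a_2 & a_3 & a_4\end{pmatrix}$, whose kernel is generically $2$-dimensional, so the candidate Sylvester forms $h(x,y) = c_0 x^3 + c_1 x^2 y + c_2 x y^2 + c_3 y^3$ form (at least) a pencil. I would then argue that within this pencil one can always select a member with three distinct real roots unless the geometry forces otherwise — and the "otherwise" cases are exactly products of four real linear factors or the degenerate cases (cube, $\ell^3 \ell'$, $\ell^2 \ell'^2$, $\ell^2 \ell' \ell''$) that are handled separately. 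Concretely, after normalizing so that neither $x$ nor $y$ divides $f$ (so $a_0 a_4 \neq 0$), I would write down the kernel of $H_3(f)$ explicitly, parametrize the pencil of cubics, and compute when a generic member has positive discriminant; the key identity to chase is how the discriminant of the cubic $h$ in the pencil relates to the invariants (the $I$ and $J$, or equivalently the catalecticant and the Hessian-type quantities) of the quartic $f$. For the definite quartics — the genuinely new case not covered by having real factors — I expect this to go through because the catalecticant $\det H_2(f)$ condition and the sign of the relevant discriminant will be favorable; one can also fall back on an explicit family, e.g.\ reducing the definite quartic to $x^4 + 6\lambda x^2 y^2 + y^4$ with $\lambda \in (0,1)$ (or $\lambda$ in the appropriate range), for which $L_{\rr} = 3$ can be exhibited by a direct Sylvester-form computation as foreshadowed by the discussion of $x^4 + 6\lambda x^2y^2 + y^4$ in the introduction and in Theorem 5.7.

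The main obstacle will be the definite (and more generally the "few real roots") cases, where there is no real factorization to lean on and one must actually verify that the pencil of Sylvester cubics contains a member splitting over $\rr$; I expect to handle this either by a uniform discriminant computation or, less elegantly, by splitting into the finitely many $GL_2(\rr)$-canonical forms and checking each by hand. A secondary nuisance is bookkeeping the boundary/degenerate orbits (repeated roots), but those reduce to cubic-type arguments already available: a quartic with a linear factor of multiplicity $\ge 2$ can, after a change of variables, be written with $x^2 \mid f$, and then the length is controlled by the length of the residual lower-degree form together with Theorem 4.2; alternatively Theorem 5.4 identifies exactly when $L_{\cc}(f) = 4$ (namely $f = \ell^3 \ell'$), and in every such case $\ell, \ell'$ real forces four real factors while $\ell$ or $\ell'$ non-real gives $L_{\cc}(f) \le 3$ hence $L_{\rr}(f) \le 3$ by Corollary 4.4 once one checks the minimal $\cc$-representation is real. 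Assembling these pieces yields the converse and completes the proof.
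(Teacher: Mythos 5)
Your skeleton agrees with the paper's: the forward direction is exactly Corollary 4.11 (Theorem 3.2 with $\tau=4$), and the converse is reduced to showing $L_{\rr}(f)\le 3$ for the remaining factorization types, namely $f$ definite and $f$ equal to a definite quadratic times two real linear forms (distinct or repeated), each normalized to a canonical form. The problem is that at exactly the point where the theorem has content, your argument becomes an expectation rather than a proof. For the definite case you propose to show that the pencil $\ker H_3(f)$ of candidate Sylvester cubics contains a member with three distinct real roots because the discriminant ``will be favorable''; that is precisely the assertion to be established, it is not generic (the whole interval of moduli $6\la\in(-2,2]$ for $x^4+6\la x^2y^2+y^4$ must be covered, including the boundary point $(x^2+y^2)^2$ where $L_{\qq}=4$ even though $L_{\rr}=3$), and no computation is offered. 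The paper avoids the discriminant analysis entirely by exhibiting explicit one-parameter identities, e.g.
\begin{equation*}
(rx+y)^4+(x+ry)^4-(r^3+r)(x+y)^4=(r-1)^2(r^2+r+1)\Bigl(x^4-\tfrac{6r}{r^2+r+1}\,x^2y^2+y^4\Bigr),
\end{equation*}
together with its analogue for $xy(x^2+dxy+y^2)$ and the identity $x^2(2x^2+12y^2)=(x+y)^4+(x-y)^4-2y^4$, and then uses continuity of $r\mapsto -6r/(r^2+r+1)$ (resp.\ $r\mapsto 3(1+r)^2/(2(r^2+r+1))$) to sweep out the required ranges $(-2,2]$ and $[0,2)$. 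Either your route or the paper's could be completed, but yours stops where the work begins.

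A second, more localized gap is your fallback for the degenerate orbits. Theorem 5.4 does give $L_{\cc}(f)\le 3$ for every real quartic not of the form $\ell^3\ell'$, but $L_{\cc}(f)\le 3$ does not yield $L_{\rr}(f)\le 3$: Corollary 4.4(2) forces uniqueness (hence, by conjugation, reality) of the minimal representation only when $L_{\cc}(f)\le\frac{d+1}{2}=\frac52$, i.e.\ length $\le 2$, so a length-$3$ complex representation of a quartic need be neither unique nor real, and ``once one checks the minimal $\cc$-representation is real'' is exactly the unproved step. Similarly, ``the length of $x^2h$ is controlled by the length of $h$ together with Theorem 4.2'' is not a result available in the paper. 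In fact the repeated-root orbits require no separate treatment: classifying by the number $k$ of definite quadratic factors of $\pm f$ absorbs them all ($\ell^4$, $\ell^3\ell'$, $\ell^2\ell'^2$, $\ell^2\ell'\ell''$ with everything real land in the $k=0$ case; $\ell^2$ times a conjugate pair is the $k=1$ repeated case handled by the explicit identity above; $\ell^2\bar\ell^2$ is definite), which is the organization you should adopt.
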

\begin{proof}
Factor $\pm f$ as a product of $k$ positive definite quadratic forms
and $4-2k$ linear forms. If $k=0$, then Corollary 4.11 implies that
$L_{\rr}(f) = 4$. We must show that if $k=1$ or $k=2$, then $f$ has a
representation over $\rr$ as a sum of $\le 3$ fourth powers. 

If $k=2$, then $f$ is positive definite and by \cite[Thm.6]{PR}, after
an invertible linear change of variables, $f(x,y) = x^4 + 6\la
x^2y^2+y^4$, with $6\la \in (-2,2]$. (This is also proved in
\cite{R5}.) If $r \neq 1$, then 
\begin{equation}\label{E:q1}
\begin{gathered}
 (rx + y)^4 + (x+ry)^4 -
(r^3+r)(x+y)^4  \\
= (r-1)^2(r^2+r+1) \left(x^4 - \left(\tfrac{6r}{r^2+r+1}\right)
x^2y^2 + y^4\right). 
\end{gathered}
\end{equation}
Let $\phi(r) =  - \frac{6r}{r^2+r+1}$. Then $\phi(-2+\sqrt 3) = 2$ and
$\phi(1) = -2$, and since $\phi$ is continuous, it maps $[-2+\sqrt
3,1)$ onto $(-2,2]$, and \eqref{E:q1} shows that $L_{\rr}(f) \le 3$.

If $k=1$, there are two cases, depending on whether the linear
factors are distinct. Suppose that after a linear
change, $f(x,y) = x^2h(x,y)$, where $h$ is positive definite, and so
for some $\la > 0$ and linear $\ell$, $h(x,y) = \la x^2 +
\ell^2$. After another linear change, 
\begin{equation}\label{E:q2}
f(x,y) = x^2(2x^2 + 12y^2) = (x+y)^4 + (x-y)^4 - 2y^4,
\end{equation}
and \eqref{E:q2} shows that $L_{\rr}(f) \le 3$.

If the linear factors are distinct, then after a linear change,
\begin{equation*}
f(x,y) = xy(ax^2 + 2b x y + cy^2)
\end{equation*}
where $a>0, c>0, b^2 < ac$. After a scaling,
$f(x,y) = xy(x^2 + d x y + y^2)$, $|d| 
< 2$, and by taking $\pm f(x,\pm y)$, we may assume $d \in [0,2)$.
 If $r \neq 1$, then
\begin{equation}\label{E:q3}
\begin{gathered}
(r^4+1)(x+y)^4 - (rx + y)^4 -
(x+ry)^4  \\ 
= 4(r-1)^2(r^2+r+1) \left( x^3y + \left(\tfrac{3(1+r)^2}{2(r^2+r+1)}\right) x^2y^2 + xy^3\right).
\end{gathered}
\end{equation}
Let $\psi(r) = \frac{3(1+r)^2}{2(r^2+r+1)}$. Since $\psi(-1) = 0$,
$\psi(1) = 2$ and  $\psi$ is continuous, it maps $[-1,1)$ onto
$[0,2)$, and \eqref{E:q3} shows that $L_{\rr}(f) \le 3$. 
 \end{proof}

The next result must be ancient; $L_{\cc}(x^{d-1}y)=d$ seems well
known, but we have not found a suitable reference for the converse.
Landsberg and Teitler \cite[Cor.4.5]{LT} show that $L_{\cc}(x^ay^b) =
\max(a+1,b+1)$ if $a,b \ge 1$. 

\begin{theorem}
If $d \ge 3$, then $L_{\cc}(f) = d$ if and only if there are two distinct
linear forms $\ell$ and $\ell'$ so that $f = \ell^{d-1}\ell'$.
\end{theorem}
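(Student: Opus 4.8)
The plan is to prove the two implications separately, using Theorem 2.1 and Corollary 2.2 throughout.

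For the implication ``$f=\ell^{d-1}\ell'$ with $\ell\ne\ell'$ implies $L_{\cc}(f)=d$'', first make an invertible linear change of variables over $\cc$ --- which changes neither $L_{\cc}(f)$ nor the shape $\ell^{d-1}\ell'$, and which is available precisely because $\ell$ and $\ell'$ are distinct --- to reduce to $f=x^{d-1}y$. For this $f$ one has $\binom d1 a_1=1$ and $a_j=0$ for $j\ne1$, so for every $r$ with $2\le r\le d-1$ the Hankel system \eqref{E:S4} forces $c_0=c_1=0$; hence every Sylvester form of $f$ of degree $r\le d-1$ is divisible by $y^2$ and does not split into distinct factors. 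Since $L_{\cc}(f)=1$ would make $f$ a $d$-th power, Corollary 2.2 gives $L_{\cc}(f)\ge d$, and Theorem 4.9 gives equality. (Alternatively $L_{\cc}(x^{d-1}y)=d$ is the Landsberg--Teitler value quoted above.)

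For the converse, assume $L_{\cc}(f)=d$. Since $d\ge3>1$, $f$ is not a $d$-th power, so the two rows $(a_0,\dots,a_{d-1})$ and $(a_1,\dots,a_d)$ of the $2\times d$ matrix $H_{d-1}(f)$ are linearly independent (otherwise the $a_j$ form a geometric progression and $f$ is a $d$-th power, as in Lemma 4.5). Hence the space $W$ of binary $(d-1)$-forms $h=\sum_t c_t x^{d-1-t}y^t$ with $H_{d-1}(f)c=0$ --- the Sylvester forms of $f$ of degree $d-1$, together with $0$ --- has dimension exactly $d-2$, and by Corollary 2.2 every nonzero $h\in W$ has a repeated linear factor. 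The crux is the claim: \emph{a $(d-2)$-dimensional linear subspace $W$ of the space of binary $(d-1)$-forms, every nonzero member of which has a repeated factor, equals $V_\ell:=\{h:\ell^{2}\mid h\}$ for some linear form $\ell$.} I would first prove the lemma that any pencil $\{sh_0+th_1\}$ of binary forms all of whose members have a repeated factor has a common squared linear factor: the map $(s,t)\mapsto\operatorname{disc}_{x,y}(sh_0+th_1)$ vanishes on all of $\pp^1(\cc)$, hence is identically $0$, so $sh_0+th_1$ has a repeated irreducible factor over $\cc(s,t)$; choosing it primitive in $\cc[s,t][x,y]$, its square divides $sh_0+th_1$, and since $sh_0+th_1$ has degree $1$ in $(s,t)$ the factor must be free of $s,t$, so its square divides both $h_0$ and $h_1$ and any linear factor of it serves. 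Granting this, fix a nonzero $h_0\in W$ and let $S$ be its (finite, nonempty) set of repeated linear factors; applying the lemma to $\langle h_0,h\rangle$ (or noting $h\in\cc h_0$) for each $h\in W$ gives $W=\bigcup_{\ell\in S}(W\cap V_\ell)$, and since $W$ is irreducible it lies in a single $V_\ell$, which by the dimension count $\dim V_\ell=d-2=\dim W$ forces $W=V_\ell$.

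It remains to recover $f$ from $W=V_\ell$. After a further change of variables making $\ell=y$, the identity $W=\{h:y^{2}\mid h\}=\{c:c_0=c_1=0\}$ says the row space of $H_{d-1}(f)$ is spanned by the first two coordinate vectors, so both rows $(a_0,\dots,a_{d-1})$ and $(a_1,\dots,a_d)$ are supported on their first two entries; this forces $a_2=a_3=\dots=a_d=0$, hence $f=a_0x^d+d\,a_1x^{d-1}y=x^{d-1}(a_0x+d\,a_1y)$. As $f$ is not a $d$-th power, $a_1\ne0$, so $f=\ell^{d-1}\ell'$ with $\ell=x$ and $\ell'=a_0x+d\,a_1y$ distinct, as required. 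I expect the pencil lemma, and the reduction of the claim about $W$ to it, to be the only step needing genuine care; the rest is bookkeeping with \eqref{E:S4}.
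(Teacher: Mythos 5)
Your forward direction is essentially the paper's: reduce to $x^{d-1}y$ and note that the Hankel system \eqref{E:S4} forces $c_0=c_1=0$, so no squarefree Sylvester form of degree at most $d-1$ exists. Your converse, however, takes a genuinely different route. The paper factors $f$, changes variables so two distinct factors become $x$ and $y$ (so $b_0=b_d=0$), rules out $m_1,m_2\ge 2$ via the Sylvester form $x^{d-1}-y^{d-1}$, and then shows that any nonzero root $t_0$ of $q(t)=\sum_i b_{i+1}t^i$ would give the squarefree Sylvester form $(x^d-t_0^dy^d)/(x-t_0y)$; the Fundamental Theorem of Algebra then forces $q$ to be the constant $b_1$, i.e.\ $g=db_1x^{d-1}y$. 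You instead study the full kernel $W$ of $H_{d-1}(f)$: it has dimension $d-2$, every nonzero member has a repeated factor, and your pencil lemma (identically vanishing discriminant over $\cc(s,t)$, Gauss's lemma, and the degree count in $(s,t)$) together with the fact that a vector space over an infinite field is not a finite union of proper subspaces gives $W=\{h:\ell^2\mid h\}$, from which $f=\ell^{d-1}\ell'$ is read off. Both arguments are correct; the paper's is more elementary and explicit (it exhibits the contradicting Sylvester form directly), while yours is more structural --- it identifies the entire degree-$(d-1)$ apolar space, and the pencil lemma is of independent interest. Two points you should make explicit when writing this up: (i) the mid-proof change of variables ``making $\ell=y$'' uses the $GL_2$-equivariance of the Sylvester condition, which is clear from the reformulation $h(D)f=0$ mentioned after Theorem 2.1 but not from the raw matrix equation \eqref{E:S4}; alternatively, argue directly that the $2$-dimensional row space of $H_{d-1}(f)$ must be the annihilator of $V_\ell$. (ii) In the pencil lemma, the Gauss/primitivity step and the conclusion that the repeated irreducible factor lies in $\cc[x,y]$ (hence is linear, since $\cc$ is algebraically closed) need $h_0,h_1$ linearly independent so that $sh_0+th_1$ is primitive over $\cc[s,t]$ --- which is exactly the case not already disposed of by $h\in\cc h_0$, so your case split is the right one.
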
 
\begin{proof}
If $f = \ell^{d-1}\ell'$, then after an invertible linear change, we
may assume that $f(x,y) = d x^{d-1}y$. If $L_{\cc}(dx^{d-1}y) \le d-1$,
then $f$ would have a Sylvester form of degree $d-1$. But then, as in
\eqref{E:x2y}, \eqref{E:S4} becomes
\begin{equation*}
\begin{pmatrix}
0 & 1 & \cdots & 0 \\
1 & 0 & \cdots & 0 
\end{pmatrix}
\cdot
\begin{pmatrix}
c_0\\c_1\\ \vdots \\ c_{d-1}
\end{pmatrix}
=\begin{pmatrix}
0\\0
\end{pmatrix} \implies c_0 = c_1 = 0,
\end{equation*}
so $h$ does not have distinct factors. Thus,  $L_{\cc}(dx^{d-1}y) =d$.

Conversely, suppose $L_{\cc}(f) = d$. Factor $f = \prod \ell_j^{m_j}$
as a product of pairwise distinct linear forms, with $\sum m_j = d$,
$m_1 \ge m_2 \dots \ge m_s\ge 1$, and $s > 1$ (otherwise, $L_{\cc}(f) =
1$.) Make an invertible linear change taking $(\ell_1,\ell_2)
\mapsto (x,y)$, and call the new form $g$; $L_{\cc}(g) = d$ as well.
If $g(x,y) = \sum_{\ell=0}^d \binom d{\ell}
b_{\ell} x^{d-\ell}y^{\ell}$, then $b_0 = b_d=0$. 
 By  hypothesis, there does not exist 
a Sylvester form of degree $d-1$ for $g$. Consider
\begin{equation*}
\begin{pmatrix}
0 & b_1 & \cdots & b_{d-2} & b_{d-1} \\
b_1 & b_2 & \cdots & b_{d-1} & 0
\end{pmatrix}
\cdot
\begin{pmatrix}
c_0\\c_1\\ \vdots \\ c_{d-1}
\end{pmatrix}
=\begin{pmatrix}
0\\0
\end{pmatrix}.
\end{equation*}   
If $m_1 \ge m_2 \ge 2$, then $x^2,y^2 \ |\ g(x,y)$ and $b_1 = b_{d-1} =
0$ and $x^{d-1} - y^{d-1}$ 
is a Sylvester form of degree $d-1$ for $f$. Thus $m_2 = 1$ and so
$y^2$ does not divide $g$ and $b_1 \neq 0$. 
Let $q(t) = \sum_{i=0}^{d-2} b_{i+1}t^i$ (note the absence of binomial
coefficients!) and suppose $q(t_0) = 0$. Since $q(0) = b_1$, $t_0 \neq
0$. We have 
\begin{equation*}
\begin{pmatrix}
0 & b_1 & \cdots & b_{d-2} & b_{d-1} \\
b_1 & b_2 & \cdots & b_{d-1} & 0
\end{pmatrix}
\cdot
\begin{pmatrix}
1\\t_0\\ \vdots \\ t_0^{d-1}
\end{pmatrix}
=\begin{pmatrix}
t_0q(t_0)\\q(t_0)
\end{pmatrix}
=\begin{pmatrix}
0\\0
\end{pmatrix}.
\end{equation*}  
Since
\begin{equation*}
h(x,y) = \sum_{i=0}^{d-1} t_0^i x^{d-1-i}y^i = \frac{x^d - t_0^d y^d}{x - t_0
  y} = \prod_{k=1}^{d-1} (x - \ze_{d-1}^k t_0y)
\end{equation*} 
has distinct linear factors, it is a Sylvester form for $g$, and
$L_{\cc}(g) \le d-1$. This contradiction implies that $q$ has no
zeros, and by the Fundamental Theorem of Algebra, $q(t) = b_1$
must be a constant. It follows that $g(x,y) = db_1x^{d-1}y$, as promised.
\end{proof}

 By Corollaries 4.4 and 5.1, instances of the first five cabinets in
 Corollary 5.1(5) are: $x^4$, $x^4+y^4$, $x^4 + y^4 + (x+y)^4$, $x^3y$
 and $(x+iy)^4 + (x-iy)^4$. It will follow from the next results that 
$\mathcal C((x^2+y^2)^2) = \{3,4\}$.

\begin{theorem}
If $d=2k$ and $f(x,y) = \binom {2k}k x^ky^k$, then $L_{\cc}(f) = k+1$.
The minimal $\cc$-representations of $f$
are given by 
\begin{equation}\label{E:kent}
(k+1)\binom {2k}kx^ky^k =
\sum_{j=0}^k  (\ze_{2k+2}^j w x + \ze_{2k+2}^{-j} w^{-1}y)^{2k},
\qquad 0 \neq w \in \cc. 
\end{equation}
\end{theorem}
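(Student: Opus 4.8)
The plan is to apply Sylvester's 1851 Theorem (Theorem 2.1) together with Corollary 2.2 to the form $f(x,y) = \binom{2k}k x^ky^k$. First I would read off the normalized coefficients: writing $f = \sum_{j=0}^{2k} \binom{2k}j a_j x^{2k-j}y^j$, we have $a_k = 1$ and $a_j = 0$ for $j \neq k$. To find the minimal length I would examine the Hankel matrices $H_r(f)$. For $r \le k$, the matrix $H_r(f)$ is $(2k-r+1)\times(r+1)$ and its null-vectors $(c_0,\dots,c_r)$ must satisfy $\sum_{t} a_{\ell+t}c_t = 0$ for $0 \le \ell \le 2k-r$; since the only nonzero $a$ is $a_k$, this says $c_{k-\ell} = 0$ whenever $0 \le k-\ell \le r$, i.e. $c_i = 0$ for all $i$ with $\max(0,k-(2k-r))\le i \le \min(r,k)$. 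For $r \le k$ this forces $c_0 = \dots = c_r = 0$ (indeed $i$ ranges over $\{k-(2k-r),\dots,k\}\cap\{0,\dots,r\}$, which one checks is all of $\{0,\dots,r\}$ when $r\le k$), so there is no nonzero Sylvester form of degree $\le k$ and hence $L_{\cc}(f) \ge k+1$.

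Next I would produce Sylvester forms of degree exactly $k+1$ and match them to the claimed representations. For $r = k+1$, the condition $\sum_t a_{\ell+t}c_t = 0$ for $0 \le \ell \le k-1$ becomes $c_{k-\ell} = 0$ for $1 \le k-\ell$, i.e. $c_1 = c_2 = \dots = c_k = 0$, leaving $c_0, c_{k+1}$ free. So the degree-$(k+1)$ Sylvester forms are exactly $h(x,y) = c_0 x^{k+1} + c_{k+1} y^{k+1}$. Such an $h$ has distinct linear factors over $\cc$ precisely when $c_0 c_{k+1} \neq 0$ (if $c_0 = 0$ then $y^{k+1}\mid h$; if $c_{k+1}=0$ then $x^{k+1}\mid h$). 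Writing $c_0 = w^{-(2k+2)}$ up to scaling, $h(x,y) = \prod_{j=0}^k (w x - \ze_{2k+2}^{-j}w^{-1}\cdot(\text{const})\, y)$ — more cleanly, after scaling I would take $h(x,y) = \prod_{j=0}^{k}(wx - \ze_{2k+2}^{-j}w^{-1}y)$ up to a constant, whose roots are the $k+1$ distinct ratios $x/y = \ze_{2k+2}^{-j}w^{-2}$. By Theorem 2.1 there exist $\la_j \in \cc$ with $f = \sum_{j=0}^k \la_j (\ze_{2k+2}^j wx + \ze_{2k+2}^{-j}w^{-1}y)^{2k}$; this proves $L_{\cc}(f) = k+1$.

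It then remains to verify that the $\la_j$ are exactly $1$ (so that the representations really have the stated form) and that \emph{every} minimal representation arises this way. For the first point I would expand $\sum_{j=0}^k (\ze_{2k+2}^j wx + \ze_{2k+2}^{-j}w^{-1}y)^{2k} = \sum_{m=0}^{2k}\binom{2k}m w^{2k-2m}\Big(\sum_{j=0}^k \ze_{2k+2}^{j(2k-2m)}\Big) x^{2k-m}y^m$; the inner geometric sum $\sum_{j=0}^k \ze_{2k+2}^{2j(k-m)} = \sum_{j=0}^k \ze_{k+1}^{j(k-m)}$ equals $k+1$ when $k-m \equiv 0 \pmod{k+1}$, i.e. $m = k$ (within range $0 \le m \le 2k$ the only solution is $m=k$; note $m=2k+1$ is excluded), and $0$ otherwise. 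So the sum collapses to $(k+1)\binom{2k}k x^k y^k$, confirming \eqref{E:kent} with all coefficients $1$. For the converse — that these are all minimal $\cc$-representations — I would invoke Theorem 2.1 in the direction that every length-$(k+1)$ representation corresponds to a degree-$(k+1)$ Sylvester form with distinct factors; we classified those as $c_0 x^{k+1} + c_{k+1}y^{k+1}$ with $c_0c_{k+1}\neq 0$, and each such $h$, after extracting roots, is of the form $\prod(wx - \ze_{2k+2}^{-j}w^{-1}y)$ for a suitable $w$ (any square root of $c_{k+1}/c_0$ times a $(2k+2)$-th root of unity, which just permutes the factors and rescales), giving a set of summands as in \eqref{E:kent}. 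The main obstacle I anticipate is the bookkeeping in the converse: being careful that different $(c_0,c_{k+1})$ and different choices of $w$ (square root, multiplication by roots of unity) really do trace out the same family of honest representations modulo the allowed identifications $\ell^d \sim (\ze_d^k\ell)^d$ and reordering — in particular checking that the scalars $\la_j$ forced by the linear system \eqref{E:univer} come out all equal to $1$ for \emph{every} admissible $h$, not just the normalized one, which follows from the symmetry of $f$ under $(x,y)\mapsto(wx\cdot\text{stuff})$ but should be spelled out.
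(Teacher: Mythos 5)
Your proposal is correct and follows essentially the same route as the paper: the lower bound $L_{\cc}(f)\ge k+1$ comes from the nonsingularity of the Hankel system (the paper just observes that $H_k(x^ky^k)$ is an antidiagonal permutation matrix, which suffices since a Sylvester form of lower degree could be padded to degree $k$), and the identity \eqref{E:kent} is verified by exactly the same roots-of-unity filter $\sum_{j=0}^k\ze_{k+1}^{j(k-t)}$. The one place you diverge is the ``these are all'' step: you classify the null space of $H_{k+1}(f)$ as $\{c_0x^{k+1}+c_{k+1}y^{k+1}\}$ and match each admissible $h$ to a $w$, whereas the paper plays a given family member against any putative other representation via Corollary 4.3 ($s+t=d+2$ forces the combined forms to be pairwise distinct, leaving only $x^{2k},y^{2k}$ available). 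Your version is exactly what the paper's remark after the theorem sketches, and your worry about the $\la_j$ for non-normalized $h$ is handled by Theorem 4.2: $k+1\le d+1$ distinct $d$-th powers are linearly independent, so the coefficients are determined by the set of forms and the direct computation pins them at $1$. One bookkeeping slip to fix: the Sylvester factor attached to the summand $(\ze_{2k+2}^jwx+\ze_{2k+2}^{-j}w^{-1}y)^{2k}$ is $-\ze_{2k+2}^{-j}w^{-1}x+\ze_{2k+2}^jwy$, whose root is $x/y=\ze_{2k+2}^{2j}w^2=\ze_{k+1}^jw^2$; your displayed product $\prod_j(wx-\ze_{2k+2}^{-j}w^{-1}y)$ has roots $\{\ze_{2k+2}^{-j}w^{-2}\}_{j=0}^k$, which is \emph{not} a full set of $(k+1)$-st roots of a common value (the $(k+1)$-st powers alternate in sign with $j$), so that product is not proportional to a binomial $c_0x^{k+1}+c_{k+1}y^{k+1}$. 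With $\ze_{k+1}^j$ in place of $\ze_{2k+2}^{-j}$ the correspondence $h\propto x^{k+1}-w^{2(k+1)}y^{k+1}$ works for every admissible $(c_0,c_{k+1})$, and the argument closes.
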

\begin{proof}
We first evaluate the right-hand side of \eqref{E:kent} by expanding the
$2k$-th power:
\begin{equation}\label{E:surprise}
\begin{gathered}
\sum_{j=0}^k  (\ze_{2k+2}^j w x + \ze_{2k+2}^{-j} w^{-1}y)^{2k}
= \sum_{j=0}^k \sum_{t=0}^{2k} \binom {2k}t \ze_{2k+2}^{j(2k-t)-jt}
w^{(2k-t)-t} x^{2k-t}y^t \\
=  \sum_{t=0}^{2k} \binom {2k}t w^{2k-2t} x^{2k-t}y^t\left( \sum_{j=0}^k
\ze_{k+1}^{j(k-t)}\right).
\end{gathered}
\end{equation}
But $ \sum_{j=0}^{m-1}\ze_m^{rj} = 0$ unless $m \ | \ r$, in
which case it equals $m$. Since the only multiple of $k+1$ in the set
$\{k-t: 0 \le t \le 2k\}$ occurs for $t=k$, \eqref{E:surprise} reduces to
the left-hand side of \eqref{E:kent}. We now show that
these are {\it all} the minimal $\cc$-representations of $f$.

Since $H_k(x^ky^k)$ has
1's on the NE-SW diagonal, it is non-singular, so
$L_{\cc}(x^ky^k) > k$, and $L_{\cc}(x^ky^k) = k+1$ by
\eqref{E:kent}. By Corollary 4.3, any minimal
$\cc$-representation {\it not} given by \eqref{E:kent} can only use powers
of forms which are distinct from any $w x + w^{-1}y$. If $ab = c^2 \neq 
0$, then $a x+ by$ is a multiple of $\frac ac x + \frac ca y$. This
leaves only $x^{2k}$ and $y^{2k}$, and there is  no 
linear combination of these giving $x^ky^k$. 
\end{proof}

The representations in \eqref{E:kent} arise because the
null-vectors of $H_{k+1}(x^ky^k)$ can only be $(c_0,0,\dots,0,c_{k+1})^t$ and
$c_0x^{k+1} + c_{k+1}y^{k+1}$ is a Sylvester form when $c_0c_{k+1} \neq 0$.

\begin{corollary}
For $k \ge 1$, $L_{\cc}((x^2+y^2)^k) = k+1$, and $L_K((x^2+y^2)^k) =
k+1$ iff $\tan \frac {\pi}{k+1} \in K$. The $\cc$-minimal
representations of $(x^2+y^2)^k$ are given by 
\begin{equation}\label{E:kent2}
\binom dk (x^2+y^2)^k = \frac 1{k+1} \sum_{j=0}^k
\left(\cos(\tfrac{j\pi}{k+1}+ \theta) x +  \sin(\tfrac{j\pi}{k+1}+
  \theta) y\right)^{d}, \quad \theta \in \cc.
\end{equation}
\end{corollary}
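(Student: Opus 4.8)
The plan is to deduce Corollary 5.6 from Theorem 5.5 by the invertible linear change of variables over $\cc$ that diagonalizes $x^2+y^2$. First I would write $x^2 + y^2 = (x+iy)(x-iy)$ and set $u = x+iy$, $v = x - iy$; this is an invertible $\cc$-linear change, so by the remark that length is unchanged under such changes, $L_\cc((x^2+y^2)^k) = L_\cc(u^k v^k) = L_\cc\bigl(\binom{2k}{k}^{-1}\cdot\binom{2k}{k}u^kv^k\bigr)$, and Theorem 5.5 gives this value as $k+1$. Pulling \eqref{E:kent} back through $u = x+iy$, $v=x-iy$ and collecting the resulting linear forms $\ze_{2k+2}^j w(x+iy) + \ze_{2k+2}^{-j}w^{-1}(x-iy)$, one gets, after absorbing constants, a sum of $2k$-th powers of forms $\cos\psi_j\, x + \sin\psi_j\, y$; the bookkeeping is exactly the observation that writing $\ze_{2k+2}^j w = e^{i\psi_j}$ forces the coefficient of $y$ to be $i(e^{i\psi_j} - e^{-i\psi_j})/(2i)\cdot(\dots)$, i.e. proportional to $\sin\psi_j$, with the step between consecutive $\psi_j$ being $\pi/(k+1)$ (coming from $\ze_{2k+2} = e^{i\pi/(k+1)}$), and the free parameter $w$ becoming a free additive shift $\theta$. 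This yields \eqref{E:kent2}.

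Next I would establish the field statement. Over $K$, by Corollary 2.2, $L_K((x^2+y^2)^k) = k+1$ iff there is a Sylvester form of degree $k+1$ for $\binom{2k}{k}x^ky^k$ that splits into distinct linear factors over $K$ — but the Sylvester forms of degree $k+1$ are exactly $c_0 x^{k+1} + c_{k+1}y^{k+1}$ by the remark following Theorem 5.5, and for $x^2+y^2$ in the original coordinates one must instead use the change of variables and ask which of these split over $K$ after pulling back. Concretely, pulling back $c_0 x^{k+1} + c_{k+1}y^{k+1}$ through $x \mapsto x+iy$, $y \mapsto x-iy$ and asking for $K$-rational distinct splitting amounts to asking that the $k+1$ angles $\tfrac{j\pi}{k+1} + \theta$ be realizable with $\cos,\sin \in K$ simultaneously; equivalently that the regular $(k+1)$-gon's direction set be defined over $K$, which by a standard argument is equivalent to $\tan\tfrac{\pi}{k+1} \in K$ (the ratio of consecutive normal directions is governed by $e^{2i\pi/(k+1)}$, and $\cos\tfrac{2\pi}{k+1}, \sin\tfrac{2\pi}{k+1} \in K \iff \tan\tfrac{\pi}{k+1}\in K$ via the half-angle identities, together with the freedom to rotate the whole configuration by a single $\theta$ to clear denominators). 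I would also invoke \eqref{E:enlarge} and $L_\cc = k+1$ to get $L_K \ge k+1$ always, so the only issue is whether equality holds, i.e. whether \emph{some} member of the $\cc$-family \eqref{E:kent2} is defined over $K$.

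The main obstacle I expect is the clean translation of ``the representation \eqref{E:kent2} can be chosen over $K$'' into the single condition $\tan\tfrac{\pi}{k+1}\in K$: one direction (if $\tan\tfrac{\pi}{k+1}\in K$ then choose $\theta$ to make all $\cos(\tfrac{j\pi}{k+1}+\theta), \sin(\tfrac{j\pi}{k+1}+\theta)$ lie in $K$) requires producing an explicit rational rotation angle, and the converse (if the representation is over $K$ then $\tan\tfrac{\pi}{k+1}\in K$) requires extracting the tangent of the \emph{difference} of two of the angles, which is rotation-invariant and hence a genuine invariant of the configuration — here one uses that $\tan$ of a difference of angles is a rational function of the four coordinates $\cos,\sin$, so it lands in $K$. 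A subtlety is that the linear forms in \eqref{E:basic} over $K$ need not be normalized as $\cos\psi\, x + \sin\psi\, y$; but any $K$-linear form $\al x + \be y$ with $\al^2+\be^2 \ne 0$ becomes such a normalized form after scaling by $(\al^2+\be^2)^{-1/2}$, and the scaling is absorbed into $\la_j$, while $\al^2+\be^2 = 0$ would force the form to be a multiple of $x\pm iy$ — and a sum of powers of those two alone cannot equal $(x^2+y^2)^k$ for $k\ge 1$ — so the normalization is harmless. Once these points are in place, combining them with the first paragraph completes the proof; the degenerate case $k=1$ is $L_K(x^2+y^2) = 2$ for all $K$, consistent with $\tan\tfrac{\pi}{2}$ being ``$\infty$'' but $x^2+y^2 = (x+iy)(x-iy)$ needing only that $i\in\cc$, and indeed $L_\cc = 2$, $L_K=2$ trivially since $x^2+y^2$ is a binary quadratic of rank $2$.
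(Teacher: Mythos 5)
Your proposal is correct and follows essentially the same route as the paper: the change of variables $x^2+y^2=(x+iy)(x-iy)$ reducing to Theorem 5.5, the derivation of \eqref{E:kent2} by setting $w=e^{i\theta}$ in \eqref{E:kent}, the normalization of a $K$-linear form to $\cos\psi\,x+\sin\psi\,y$ so that membership in $K[x,y]$ is governed by $\tan\psi\in K$ (or $\cos\psi=0$), and the tangent addition/subtraction formulas to pass between ``some member of the family is defined over $K$'' and $\tan\tfrac{\pi}{k+1}\in K$. The only difference is cosmetic: your detour through Corollary 2.2 and the Sylvester forms $c_0x^{k+1}+c_{k+1}y^{k+1}$ is redundant given that Theorem 5.5 already classifies all minimal $\cc$-representations, which is what the paper uses directly.
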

\begin{proof}
The invertible  map $(x,y) \mapsto (x-iy, x+iy)$ takes $x^ky^k$ into
 $(x^2+y^2)^k$. Setting $0 \neq w = e^{i\theta}$ in
\eqref{E:kent} gives \eqref{E:kent2} after the usual reduction.
If $\tan\al \neq 0$, then
\begin{equation*}
(\cos \al\ x + \sin \al\ y)^{2r} = \cos^{2r} \al \cdot
(x + \tan \al\  y)^r = (1 + \tan^2\al)^{-r} (x + \tan \al\  y)^r. 
\end{equation*}
Thus, $(\cos \al\ x + \sin \al\ y)^{2r} \in K[x,y]$ iff $ \cos \al
= 0$ or $\tan \alpha \in K$. It follows that $L_K((x^2+y^2)^k) = k+1$
if and only if there exists $\theta \in\cc$ so that for each $0 \le j
\le k$, either  
$\cos(\tfrac{j\pi}{k+1}+ \theta) = 0$ or $\tan(\tfrac{j\pi}{k+1}+
\theta) \in K$. 
Since $\tan \alpha, \tan \beta \in K$ imply $\tan (\al-\be) \in K$ and
$k \ge 1$, we see that
 \eqref{E:kent2} is a
representation over $K$ if and only if $\tan \frac{\pi}{k+1} \in K$.
\end{proof}
 
In particular, since $\tan \frac{\pi}3 = \sqrt 3 \notin \qq$,
$L_{\qq}((x^2+y^2)^2 > 3$ and so must equal 4. Thus,
$\mathcal C((x^2+y^2)^2) = \{3,4\}$, as promised. Since $\tan \frac
{\pi}m$ is irrational for $m \ge 5$ (see e.g. \cite[Cor.3.12]{Ni}), it
follows that $L_{\qq}((x^2+y^2)^k) = k+1$ only for $k=1,3$.

It is worth remarking that $x^ky^k$ is a highly singular complex form,
as is $(x^2+y^2)^k$. However, as a {\it real} form,  $(x^2+y^2)^k$ is
in some sense at the center of the cone $Q_{2,2k}$. 
For  real $\theta$,  the formula in \eqref{E:kent2} 
goes back at least to Friedman \cite{F} in 1957.
It was shown in  \cite{R1} that all minimal {\it real} representations
of $(x^2+y^2)^k$ have this shape. 
There is an equivalence between representations of $(x^2+y^2)^k$ as a real
sum of $2k$-th powers and quadrature formulas on the circle -- see
\cite{R1}. In this sense, \eqref{E:kent2} can be traced back to Mehler
\cite{M} in 1864. Taking  $k=7, \theta = 0$ and
$\rho:= \tan \frac{\pi}8 =\sqrt 2 - 1$ in \eqref{E:kent2} gives 
\begin{equation*}
\begin{gathered}\tfrac{429}{256}(x^2 + y^2)^7 =
x^{14} + y^{14} + \tfrac 1{128}\left( (x+y)^{14} + (x -y)^{14}
\right) \\
+ \left(\tfrac{2+\sqrt 2}4 \right)^7 \left( (x + \rho y)^{14} +  (x - \rho
  y)^{14} + (\rho x + y)^{14} +  (\rho x - y)^{14}\right).
\end{gathered}
\end{equation*}

A real representation \eqref{E:basic} of $(\sum x_i^2)^k$ 
(with positive real coefficients $\la_j$) is called a Hilbert Identity; 
Hilbert  \cite{H, E2} used such representations with
rational coefficients to solve Waring's problem. Hilbert Identities are
deeply involved with quadrature problems on $S^{n-1}$, the
Delsarte-Goethals-Seidel theory of spherical designs in combinatorics
and for embedding questions in Banach spaces \cite[Ch.8,9]{R1}, as well
as for explicit computations in Hilbert's 17th problem \cite{R2}.
It can be shown that any such representation requires at least
$\binom{n+k-1}{n-1}$ summands, and this bound
also applies if negative coefficients $\la_j$ are allowed. It is not
known whether allowing negative coefficients can reduce to the total
number of summands. When $(\sum x_i^2)^k$ is a sum of exactly
$\binom{n+k-1}{n-1}$  $2k$-th powers, the
coordinates of minimal representations can be used to  
produce tight spherical designs. Such representations exist 
when $n=2$, $2k=2$, $(n,2k) = (3,4)$, $(n,2k) =
(u^2-2,4)$ ($u=3,5$), $(n,2k) = (3v^2-4,6)$ ($v=2,3)$, $(n,2k) =
(24,10)$. It has been proved that they do not exist otherwise, unless
possibly  $(n,2k) =(u^2-2,4)$ for some odd
integer $u \ge 7$ or $(n, 2k) =  (3v^2-4,6)$ for some integer $v \ge
4$. These questions have been largely open for thirty years. 
It is also not known whether there
exist $(k,n)$ so that $L_{\rr} ((\sum x_i^2)^k)) > L_{\cc}
((\sum x_i^2)^k)$, although this cannot happen for $n=2$.
For that matter, it is not known whether there exists any $f \in
Q_{n,d}$ so that $L_{\rr}(f) > L_{\cc}(f)$. 

We conclude this section with a related question: if $f_{\la}(x,y) =
x^4 + 6\la x^2y^2 + y^4$ for $\la \in \qq$, what is
$L_{\qq}(f_{\la})$? If $\la \le -\frac 13$, then $f_{\la}$ has four
real factors, so $L_{\qq}(f_{\la})= 4$.  
Since $\det H_2(f_\la) = \la -
\la^3$,  $L_{\cc}(f_{\la}) = 2$ for $\la = 0, 1, -1$. The formula 
\begin{equation*}
(x^4 + 6\la x^2 y^2 + y^4) = \tfrac{\la}2 \left((x+y)^4 +
  (x-y)^4\right) + (1-\la)(x^4 + y^4). 
\end{equation*}
shows that $L_{\qq}(f_0) = L_{\qq}(f_1) = 2$; $2f_{-1}(x,y) = (x+iy)^4
+(x-iy)^4$ has $\qq$-length 4.

\begin{theorem}
Suppose $\la = \frac ab \in \qq, \la^3 \neq \la$. Then $L_{\qq}(x^4 +
6\la x^2 y^2 + y^4) = 3$  
if and only if there exist  integers $(m,n)\neq(0,0)$ so that
\begin{equation}\label{E:quartic}
\Gamma(a,b,m,n) = 4a^3 b\ m^4 +(b^4 - 6a^2b^2 - 3a^4) m^2n^2 + 4a^3 b\ n^4
\end{equation}
is a non-zero square. 
\end{theorem}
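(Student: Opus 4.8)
The plan is to run Sylvester's 1851 algorithm (Theorem 2.1, Corollary 2.2) on $f_\la = x^4 + 6\la x^2y^2 + y^4$ to detect representations of length $3$, and to translate the existence of a splitting Sylvester cubic over $\qq$ into the diophantine condition on $\Ga(a,b,m,n)$. First I would write $f_\la$ in the normalized form \eqref{E:S1}: here $d=4$, and the coefficients are $a_0=1$, $a_1=0$, $a_2=\la$, $a_3=0$, $a_4=1$, since $\binom 42 = 6$. To look for Sylvester forms of degree $r=3$, I form the Hankel matrix $H_3(f_\la)$, which is $2\times 4$:
\begin{equation*}
H_3(f_\la) = \begin{pmatrix} 1 & 0 & \la & 0 \\ 0 & \la & 0 & 1 \end{pmatrix}.
\end{equation*}
Its null space is two-dimensional; solving $c_0 + \la c_2 = 0$ and $\la c_1 + c_3 = 0$ gives $(c_0,c_1,c_2,c_3) = s(\la,0,-1,0) + t(0,1,0,-\la)$ for $s,t\in\qq$, so the general degree-$3$ Sylvester form is $h(x,y) = s(\la x^3 - x y^2) + t(x^2 y - \la y^3) = (\la s\, x - t \la y)\cdot(\dots)$ — more usefully, $h(x,y) = \la s\, x^3 + t\, x^2 y - s\, x y^2 - \la t\, y^3$. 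By Corollary 2.2, $L_\qq(f_\la) \le 3$ iff for some $(s,t)\in\qq^2$ this cubic splits into three \emph{distinct} linear factors over $\qq$; and since $\la^3\ne\la$ forces $L_\cc(f_\la) = L_E(f_\la) = 3$ (the catalecticant $\det H_2(f_\la) = \la - \la^3 \neq 0$, and Theorem 5.4 or direct inspection rules out length $1$; length $2$ would need a degree-$2$ Sylvester form, impossible since $H_2$ is nonsingular), we have $L_\qq(f_\la) = 3$ iff such a splitting $(s,t)$ exists. One can rescale $(s,t)$ to be integers, and by the symmetry $x\leftrightarrow y$ (which sends $(s,t)\mapsto(-t,-s)$ or similar) together with $\la = a/b$, clearing denominators, the relevant cubic becomes, up to scaling, $a m\, x^3 + b n'\, x^2 y - \dots$; I expect the substitution reduces to parametrizing the root.

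The key computational step is to analyze when the cubic $h_{s,t}(x,y) = \la s\, x^3 + t\, x^2y - s\, xy^2 - \la t\, y^3$ splits over $\qq$ with distinct roots. By Theorem 5.2(3) applied to $h_{s,t}$ (a binary cubic with coefficients in $\qq$, assuming it has no repeated factor), $h_{s,t}$ splits over $\qq$ iff $-3\De(h_{s,t})$ is a nonzero square in $\qq$. So I would compute the discriminant $\De(h_{s,t})$ explicitly as a quartic form in $(s,t)$ — this is the routine but essential calculation — and then show that, after writing $\la = a/b$ and clearing the denominator $b^4$, demanding $-3\De$ be a nonzero rational square is equivalent to demanding that $\Ga(a,b,m,n)$ be a nonzero square for some integers $(m,n)\ne(0,0)$, where $(m,n)$ is $(s,t)$ scaled to be coprime integers. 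The shape of \eqref{E:quartic}, $4a^3b\,m^4 + (b^4 - 6a^2b^2 - 3a^4)m^2n^2 + 4a^3b\,n^4$, is symmetric in $m\leftrightarrow n$ and even in each, which matches the structure of the discriminant of a cubic of the form $\al x^3 + \be x^2 y - \al x y^2 - \be y^3 = (\al x - \be y)(x^2 + \dots)$ — indeed $h_{s,t}$ factors as $x^2y - xy^2$-type symmetry suggests $h_{s,t}(x,y) = (\la s x - \la t y)(\cdots) + \cdots$; more precisely I'd note $h_{s,t}(1,1) = \la s + t - s - \la t = (\la-1)(s-t)$ and $h_{s,t}(1,-1) = \la s - t - s + \la t = (\la-1)(s) + \cdots$, helping to locate a rational linear factor and reduce to a quadratic whose discriminant gives $\Ga$.

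The main obstacle I anticipate is bookkeeping: correctly handling the scaling between the rational pair $(s,t)$ and the integer pair $(m,n)$, the factor of $b^4$ when homogenizing $\la = a/b$, and the constant factors ($3$, powers of $2$) so that the "nonzero square" condition comes out \emph{exactly} as stated with $\Ga$ rather than some rational multiple of it. I would also need to separately dispose of the degenerate cases: (i) $h_{s,t}$ having a repeated factor — here Theorem 5.2(2) says $L_\qq(f_\la)$ would be forced to $3$ anyway only if that repeated-factor cubic still works, but actually a Sylvester form \emph{must} have distinct factors, so a repeated-factor $h_{s,t}$ simply doesn't count and must be excluded, which should correspond to $\Ga$ vanishing or being handled by the "$\ne(0,0)$, nonzero square" phrasing; (ii) the special values $(s,t) = (1,0)$ and $(0,1)$ giving $h = \la x^3 - xy^2 = x(\sqrt\la x - y)(\sqrt\la x + y)$ and similarly, which split over $\qq$ iff $\la$ is a square — these should appear as recognizable sub-cases of the $\Ga$ condition (e.g. $n=0$ gives $\Ga = 4a^3b\,m^4$, a nonzero square iff $ab$ is a square times a square, i.e. $ab \in (\qq^\times)^2$, i.e. $\la = a/b = ab/b^2$ is a square, consistent). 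Once the discriminant is computed and these cases checked, the equivalence follows directly from Theorem 5.2(3) and Corollary 2.2, with no further difficulty.
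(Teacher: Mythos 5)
Your setup is correct and matches the paper's: the Hankel conditions for a degree-$3$ Sylvester form of $f_\la$ are exactly $c_0+\la c_2 = \la c_1 + c_3 = 0$, and by Corollary 2.2 together with the nonsingularity of $H_2(f_\la)$ (here $\det H_2(f_\la)=\la-\la^3\neq 0$), the question is precisely whether some member of this two-parameter family of cubics splits into distinct linear factors over $\qq$. But your key step is based on a false criterion. Theorem 5.2(3) does \emph{not} say that a squarefree cubic $h$ splits over $K$ iff $\sqrt{-3\De(h)}\in K$; it says that $L_K(h)=2$ iff $\sqrt{-3\De(h)}\in K$ (the quantity $-3\De(h)$ is, up to a square factor, the discriminant of the quadratic \emph{Hessian} of $h$, not of $h$ itself). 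These are entirely different conditions --- indeed for real cubics they point in opposite directions: $h$ splits over $\rr$ iff $\De(h)>0$ iff $\sqrt{-3\De(h)}\notin\rr$. Even the corrected statement ``$\De(h)$ is a square in $\qq$'' is only the condition that the Galois group of $h$ lie in $A_3$; it does not imply that $h$ has a rational root (e.g.\ $x^3+x^2-2x-1$ has discriminant $49$ and is irreducible over $\qq$). So complete splitting over $\qq$ is not detected by any single ``discriminant is a square'' condition in your parameters $(s,t)$, and your plan cannot produce the stated equivalence.

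The missing idea --- and what the paper actually does --- is to parametrize not by the null-space coordinates $(s,t)$ but by the putative rational linear factor. Write $h(x,y)=(mx+ny)(rx^2+sxy+ty^2)$ with $(m,n)\in\zz^2\setminus\{(0,0)\}$; substituting into $c_0+\la c_2=\la c_1+c_3=0$ gives a $2\times 3$ linear system in $(r,s,t)$ whose coefficients depend on $(m,n)$. For $mn\neq 0$ this system has rank $2$ (using $\la^2\neq 1$), so the quadratic cofactor is determined up to scale:
\begin{equation*}
(\la n^2-\la^2m^2)x^2+(\la^2-1)mn\,xy+(\la m^2-\la^2n^2)y^2,
\end{equation*}
and $h$ splits into distinct factors over $\qq$ iff the discriminant of \emph{this quadratic} is a nonzero rational square; that discriminant equals $4\la^3m^4+(1-6\la^2-3\la^4)m^2n^2+4\la^3n^4=b^{-4}\Ga(a,b,m,n)$. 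The degenerate cases $m=0$ or $n=0$ reduce to $\la$ being a nonzero square, which is consistent with $\Ga(a,b,m,0)=4a^3bm^4$. Note in particular that the $(m,n)$ in the statement of the theorem are the coefficients of the linear factor of $h$, not your $(s,t)$; conflating the two is why your anticipated ``bookkeeping'' cannot be made to close.
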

\begin{proof}
By Corollary 2.2, such a representation occurs if and only if there is
a cubic $h(x,y) = \sum_{i=0}^3 c_ix^{3-i}y^i$ which splits over $\qq$
and satisfies
\begin{equation}\label{E:last}
c_0 + \la c_2 = \la c_1 + c_3 = 0.
\end{equation}
Assume that $h(x,y) = (mx+ny)g(x,y)$, $(m,n) \neq (0,0)$ with
$m,n \in \zz$.
 If $g(x,y) = rx^2 + s xy + ty^2$, then $c_0 = mr,
c_1 = ms + nr, c_2 = mt + ns, c_3 = nt$ and \eqref{E:last} becomes
\begin{equation}\label{E:last2}
 \begin{pmatrix}
m & \la n & \la m\\
\la n & \la m & n\\ 
\end{pmatrix}
\cdot
\begin{pmatrix}
r \\ s \\ t
\end{pmatrix}
=\begin{pmatrix}
0\\0
\end{pmatrix} 
\end{equation}
If $m=0$, then the general solution to \eqref{E:last2}
is $(r,s,t) = (r,0,-\la r)$ and
$rx^2 - \la r y^2$ splits over $\qq$ into distinct factors iff $\la$ is
a non-zero square; that is,
iff $ab$ is a square, and similarly if  $n=0$. Otherwise,  the system
has full rank since $\la^2 \neq 1$ and any solution is a multiple of 
\begin{equation}\label{E:Qhilb}
rx^2 + s xy + ty^2= (\la n^2 - \la^2 m^2) x^2 + (\la^2-1)mn xy + (\la
m^2 - \la^2 n^2)y^2. 
\end{equation}
The quadratic in \eqref{E:Qhilb} splits over $\qq$ into distinct
factors iff its discriminant  
\begin{equation}
\begin{gathered}
4\la^3 m^4 + (1 - 6\la^2 - 3\la^4)m^2n^2 + 4\la^3 n^4=  b^{-4}\Gamma(a,b,m,n)
\end{gathered}
\end{equation}
is a non-zero square in $\qq$.
\end{proof}

In particular, we have the following identities: $\Gamma(u^2,v^2,v,u)
= (u^5v - u v^5)^2$ and $\Gamma(uv, u^2-uv+v^2,1,1)
= (u-v)^6(u+v)^2$,
hence $L_{\qq}(f_{\la})=3$ for $\la = \tau^2$ and $\la =
\frac{\tau}{\tau^2 - \tau+1}$, where $\tau = \frac uv \in \qq$,  $\tau
\neq\pm  1$.  These show that $L_{\qq}(f_{\la})=3$ for a dense set
of rationals in $[-\frac 13, \infty)$. These families do not exhaust the
possibilities. If $\la = \frac{38}3$, so $f_{\la}(x,y) = x^4 +
76x^2y^2+y^4$, then $\la$ is expressible neither as $\tau^2$ nor
$\frac{\tau}{\tau^2 - \tau+1}$ for $\tau \in \qq$, but
$\Gamma(38,3,2,19) = 276906^2$. 

We mention two negative cases: if $\lambda = \frac 13$,
 $\Gamma(1,3,m,n) = 12(m^2+n^2)^2$, which is
never a square, giving another proof that 
$L_{\qq}((x^2+y^2)^2) = 4$. 
If $\la = \frac 12$, then 
\begin{equation*}
\Gamma(1,2,m,n) = 8m^4 -11m^2n^2+8n^4 = \tfrac {27}4(m^2-n^2)^2 + \tfrac
54(m^2 + n^2)^2, 
\end{equation*}
hence if $L_{\qq}(x^4 + 3x^2y^2 + y^4) = 3$, then there is a solution
to the Diophantine equation $27 X^2 + 5 Y^2 = Z^2$. A simple descent
shows that this has no non-zero solutions: working mod 5, we
see that $2X^2 = Z^2$; since 2 is not a quadratic residue mod 5, it follows
that $5\ |\ X, Z$, and these imply that $5\ |\ Y$ as well.

Solutions of the Diophantine equation $A m^4 + B m^2 n^2 + C n^4 =
r^2$ were first 
studied by Euler; see \cite{Di}[pp.634-639] and \cite{Mor}[pp.16-29]
for more on this topic. This equation has not yet been completely
solved; see \cite{Br, Co}. We hope to return to the analysis of
\eqref{E:quartic} in a future publication.

\section{Open Questions}

Conecture 4.12 seems plausible, but as the degree increases, the
canonical forms become increasingly involved. Are there other fields
besides $\cc$ (and possibly $\rr$) for which there is a simple
description of $\{f : L_K(f) = \deg f\}$?

Which cabinets are possible? Are there other restrictions beyond
Corollary 5.1(1)?  How many different lengths are possible? If
$|\mathcal C(f) | \ge 4$, then  $d \ge 7$. 

Can $f$ have more than one, but a finite number, of $K$-minimal
 representations, where $K$ is not necessarily equal to $E_f$? Theorem
 5.7 might be a way to find such examples.

Length is generic over $\cc$, but not over $\rr$. For $d=2r$, the
  $\rr$-length of a real form is always $2r$ in a small neighborhood of 
  $\prod_{j=1}^d (x - jy)$, but the $\rr$-length is always $r+1$ in a
  small neighborhood of $(x^2+y^2)^r$, by \cite{R1}. Which
combinations of degrees and lengths have interior? Does the parity of
$d$ matter? 


\end{document}